\newlength{\wdTempA}
\newlength{\wdTempB}
\numberwithin{equation}{section}
\theoremstyle{definition}
\newtheorem{theorem}{Theorem}[section] 
\newtheorem*{theorem*}{Theorem}
\newtheorem{definition}[theorem]{Definition}
\newtheorem*{definition*}{Definition}
\newtheorem{lemma}[theorem]{Lemma}
\newtheorem{proposition}[theorem]{Proposition}
\newtheorem{remark}[theorem]{Remark}
\newtheorem{question}[theorem]{Question}
\newtheorem{corollary}[theorem]{Corollary}
\newcounter{claimcounter}[theorem]
\newtheorem{claim}[claimcounter]{Claim}
\newenvironment{proofofclaim}{\noindent proof of claim.}{\hfill.}
\newcommand\Con{\mathrm{Con}}
\newcommand\N{\mathbb{N}}
\newcommand\Q{\mathbb{Q}}
\newcommand\Lt{\mathrm{L}_2}
\newcommand\ko{\mathcal{O}}
\newcommand\Rcaz{\mathsf{RCA}_0}
\newcommand\Wklz{\mathsf{WKL}_0}
\newcommand\Acaz{\mathsf{ACA}_0}
\newcommand\Rfn{\mathsf{RFN}}
\newcommand\Rfnz{\mathsf{RFN}_0}
\newcommand\JI{\mathsf{JI}}
\newcommand\Acz{\mathsf{AC}_0}
\newcommand\Caz{\mathsf{CA}_0}
\newcommand\Dcz{\mathsf{DC}_0}
\newcommand\Tdcz{\mathsf{TDC}_0}
\newcommand\HYP{\mathrm{HYP}}
\newcommand\Atrz{\mathsf{ATR}_0}
\newcommand\ATR{\mathsf{ATR}}
\newcommand\Trz{\mathsf{TR}_0}
\newcommand\weak{\mathrm{weak}~}
\newcommand\unique{\mathrm{unique}~}
\newcommand\finite{\mathrm{finite}~}
\newcommand\nfm{\mathrm{~nonzero~finitely ~many~}}
\newcommand\nsfm{\mathrm{~nonzero~subfinitely ~many~}}
\newcommand\bounded{\mathrm{bounded}~}
\newcommand\Ind{\mathrm{IND}}
\newcommand\field{\mathrm{field}}
\newcommand\codedom{\mathrm{coded}~\omega\bou\mathrm{model}~}
\newcommand\TJ{\mathrm{TJ}}
\let\oldbigcup\bigcup
\renewcommand\bigcup{\textstyle\oldbigcup}
\let\oldbigcap\bigcap
\renewcommand\bigcap{\textstyle\oldbigcap}
\let\oldprod\prod
\renewcommand\prod{\textstyle\oldprod}
\let\oldint\int
\renewcommand\int{\displaystyle\oldint}
\renewcommand\subset{\subseteq}
\newcommand\bou{\mathchar`-}
\newcommand{\adunderbrace}[2]{%
    \settowidth{\wdTempA}{$#1$}%
    \settowidth{\wdTempB}{${\scriptstyle #2}$}%
    \ifthenelse{\wdTempA<\wdTempB}{%
        \hspace*{.5\wdTempA}\hspace*{-.5\wdTempB}%
        \underbrace{#1}_{#2}%
        \hspace*{.5\wdTempA}\hspace*{-.5\wdTempB}%
    }{%
        \underbrace{#1}_{#2}%
    }%
}
\title{Approximation of hyperarithmetic analysis\\ by $\omega$-model reflection}
\author{Koki Hashimoto}
\date{\today}
\begin{document}

\maketitle
\begin{abstract}
This paper presents two types of results related to hyperarithmetic analysis.
First, we introduce new variants of the dependent choice axiom, namely
$\mathrm{unique}~\Pi^1_0(\mathrm{resp.}~\Sigma^1_1)\bou\mathsf{DC}_0$ and 
$\mathrm{finite}~\Pi^1_0(\mathrm{resp.}~\Sigma^1_1)\bou\mathsf{DC}_0$.
These variants imply $\mathsf{ACA}_0^+$ but do not imply $\Sigma^1_1\mathrm{~Induction}$.
We also demonstrate that these variants belong to hyperarithmetic analysis and explore their implications with well-known theories in hyperarithmetic analysis.
Second, we show that $\mathsf{RFN}^{-1}(\mathsf{ATR}_0)$, a class of theories defined using the 
$\omega$-$\mathrm{model~reflection~axiom}$, approximates to some extent hyperarithmetic analysis, and investigate the similarities between this class and hyperarithmetic analysis.

\end{abstract}

\tableofcontents 
\newpage

\section{Introduction}
The research program called Reverse Mathematics was initiated by Harvey Friedman in the mid-1970s and further developed by Stephen Simpson in the 1980s.
This program classifies mathematical theorems according to the strength of set existence axioms required for their proofs.In fact, the majority of classical mathematical theorems can either be proven within a subsystem of second-order arithmetic ($\mathsf{Z}_2$), denoted as $\Rcaz$, or shown to be equivalent to one of $\Wklz$, $\Acaz$, $\Atrz$, or $\Pi^1_1\bou\Caz$ over $\Rcaz$ \footnote{When demonstrating this equivalence, proofs proceed not only from axioms to theorems but also from theorems to axioms, hence the term ``reverse mathematics'' referring to this contrary process to conventional mathematics}.
These five theories, referred to as the ``big five'', have become the most extensively researched subsystems of second-order arithmetic to date.

From the early stages of reverse mathematical research, many theorems were proven to be equivalent to one of the big five axioms, but simultaneously, numerous theorems were also discovered that did not correspond to any of the big five.As research progressed, these theorems, which were not equivalent to any of the  big five, increasingly drew the attention of researchers.

The hyperarithmetic analysis, which is the focus of this paper, belongs to the category of theories that do not belong to any of the big five.

The term ``hyperarithmetic analysis'' has been used since the 1960s with a slightly different meaning than the definition given by Antonio Montalb{\'{a}}n \cite{Montalbn2006IndecomposableLO}  today.
\begin{definition*}[]\label{definition 超算術的解析}
   A set of $\Lt$ sentences $T$ is \textit{a theory of  hyperarithmetic analysis} if it satisfies the following conditions.\footnote{
		This condition is equivalent to saying that
every $X\subset \omega$, $\HYP(X)$ is the minimum $\omega$-model of $T+\Rcaz$ which contains $X$.}.
	\begin{itemize}
		\item every $X\subset \omega$, the $\omega$ model $\HYP(X)$ consisting of all $X$-hyperarithmetical sets satisfies $T$.
		\item if an $\omega$-model $M$ satisfies $T$, then $M$ is closed under computable union and hyperarithmetical reduction.
\end{itemize}
\end{definition*}
To the best of the author's knowledge, Friedman's 1967 doctoral dissertation \cite{Friedman1967SubsystemsOS} represents the earliest occurrence of the term ``hyperarithmetic analysis'' within the context of second-order arithmetic literature.
In that doctoral dissertation, the term ``hyperarithmetic analysis'' was used to refer to the theories which have HYP as their minimum $\omega$-model.
Even before the onset of reverse mathematical research, the fact that theories of hyperarithmetic analysis characterize the class $\HYP$ was well-known, providing significant motivation for studying these theories.

In the early stages of reverse mathematical research, primarily centered around the 1970s, two theorems belonging to hyperarithmetic analysis were mentioned: theorem $\mathsf{SL}$ related to the convergence of sequences and the arithmetic variant of the Bolzano–Weierstrass theorem $\mathsf{ABW}$ 	\cite{Friedman1975}.However, while these are theorems in analysis, their formulation involved references to arithmetic formulas, making them not purely mathematical theorems.

Even as reverse mathematical research became more active in the 1980s, no mathematical theorems belonging to hyperarithmetic analysis were found besides those two.It can be said that research on hyperarithmetic analysis, unable to ride the wave of reverse mathematics, stagnated for some time.

An event that significantly changed this situation was the discovery in 2006 by Montalbán of pure mathematical theorems belonging to hyperarithmetic analysis, which he called $\mathsf{INDEC}$ \cite{Montalbn2006IndecomposableLO}.
Furthermore, in the same paper, he not only introduced five other theories belonging to hyperarithmetic analysis besides $\mathsf{INDEC}$ but also salvaged the construction method of $\omega$-models called tagged tree forcing developed by Steel \cite{STEEL197855}, and further arranged it to facilitate the separation of theories in second-order arithmetic.Since then, research on separating theories of hyperarithmetic analysis using tagged tree forcing has become active.
For example, as a recent related result, Jun Le Goh proved the incomparability of a variant of the finite choice axiom $\finite\Pi^1_0\bou\Acz$\footnote{
	In most literature, it is denoted as ``$\finite\Sigma^1_1\bou\Acz$''.I believe that this convention should be discontinued (cf.remark \ref{remark:Yamero_Sigma11_Pi10}).
} with $\Delta^1_1\bou\Caz$ using tagged tree forcing \cite{goh_2023}.
In addition to research using tagged tree forcing methods, the 2020s saw the discovery of another pure mathematical theorem belonging to hyperarithmetic analysis, following $\mathsf{INDEC}$.This theorem, known as Halin's theorem in graph theory, is detailed in \cite{Barnes2023HalinsIR}.
These findings show that hyperarithmetic analysis, although initially delayed, is now a key component of Reverse Mathematics.

The results reported in this paper regarding hyperarithmetic analysis can be classified into two types.
The first involves defining $\unique\Pi^1_0\bou\mathsf{DC}_0$ and its variants, and examining their relative strengths compared to existing theories of hyperarithmetic analysis.
The $\unique\Pi^1_0\bou\mathsf{DC}_0$ is easily seen to be strictly stronger than $\Acaz^+$ and strictly weaker than $\Sigma^1_1\bou\mathsf{DC}_0$.
Furthermore, in this paper, we show that while this theory is incomparable with $\Sigma^1_1\bou\mathsf{AC}_0$ and $\Delta^1_1\bou\mathsf{CA}_0$, it is equivalent under appropriate induction to $\unique\Pi^1_0\bou\Acz$(conventionally denoted as $\weak\Sigma^1_1\bou\Acz$).
Using this result as a stepping stone, we introduce several variants of $\unique\Pi^1_0\bou\mathsf{DC}_0$ and discuss their strength relative to existing theories of hyperarithmetic analysis.Figure \ref{figure:Reverse Mathematics Zoo: Hyperarithmetic Analysis Area} organizes the obtained results.

The second effort involves attempts to syntactically characterize hyperarithmetic analysis, the class traditionally defined through $\omega$-models.To the best of the author's knowledge, such a characterization has not yet been established.Unfortunately, even the author has not achieved a complete syntactical description; however, it was confirmed that the class $\Rfn^{-1}(\Atrz)$, defined below, provides some approximation.A more detailed explanation follows.

Let $T$ be a $\Lt$ theory.We call the statement ``Given $X \subset \N$, there exists a coded $\omega$-model $M$ such that $M$ contains $X$ and satisfies $T+\Acaz$ '' \textit{$\omega$-model reflection} of $T$ and denote it as $\Rfn(T)$.
We denote the set of all \( T \) for which \( \Rfn(T) \) is equivalent to \( \Atrz \) over \( \Acaz \) as \( \Rfn^{-1}(\Atrz) \).First, we confirm that this approximates the class of hyperarithmetic analysis to some extent.
Specifically, we demonstrate the similarities between hyperarithmetic analysis and $\Rfn^{-1}(\Atrz)$ in terms of individual instances, and show that they share certain closure properties.
We also consider two types of questions regarding the relationship between $\Rfn^{-1}(\Atrz)$ and hyperarithmetic analysis.

These questions seek to explore what other similarities exist, and conversely, what differences there are between them.
In this paper, as an example of the former, we discuss whether $\Sigma^1_3$ instances exist in $\Rfn^{-1}(\Atrz)$.This question arises from the fact that $\Sigma^1_3$ instances are known not to exist in hyperarithmetic analysis.
Regrettably, the author has not completely resolved the question.Nonetheless, the paper offers a partial solution by showing that the assertion is valid for $\forall X \exists ! Y \Pi^1_0$ type of $\Pi^1_2$-sentences.

On the other hand, as a question in the direction of the latter, we can consider what theories exist in symmetric difference of $\Rfn^{-1}(\Atrz)$ and hyperarithmetic analysis.However, only trivial theories have been found so far, and fundamentally, this remains unresolved.

In Chapter 2, we organize the foundational topics of second-order arithmetic with a focus on hyperarithmetic analysis and introduce related prior research.
In Chapter 3, we introduce the unique version of the dependent choice principle, denoted as $\unique\Gamma\bou\Dcz$, and explore the implications between this axiom and various other axioms, including variations of the axiom of choice.
In Chapter 4, we introduce the finite version of the dependent choice principle, denoted as $\finite\Gamma\bou\Dcz$, and conduct an analysis similar to that in Chapter 3.
In Chapter 5, we verify that all theories with strengths between $\JI_0$ and $\Sigma^1_1\bou\Dcz$ belong to $\Rfn^{-1}(\Atrz)$, demonstrating the closure properties of this class, such as closure under the addition of induction axioms.We then show that there are no statements of the form $\forall X \exists ! Y \theta(X,Y)$ (where $\theta$ is an arithmetic formula) that belong to $\Rfn^{-1}(\Atrz)$.

\newpage

\begin{figure}[hbtp]
	\centering
	\includegraphics[width=14cm,pagebox=cropbox,clip]{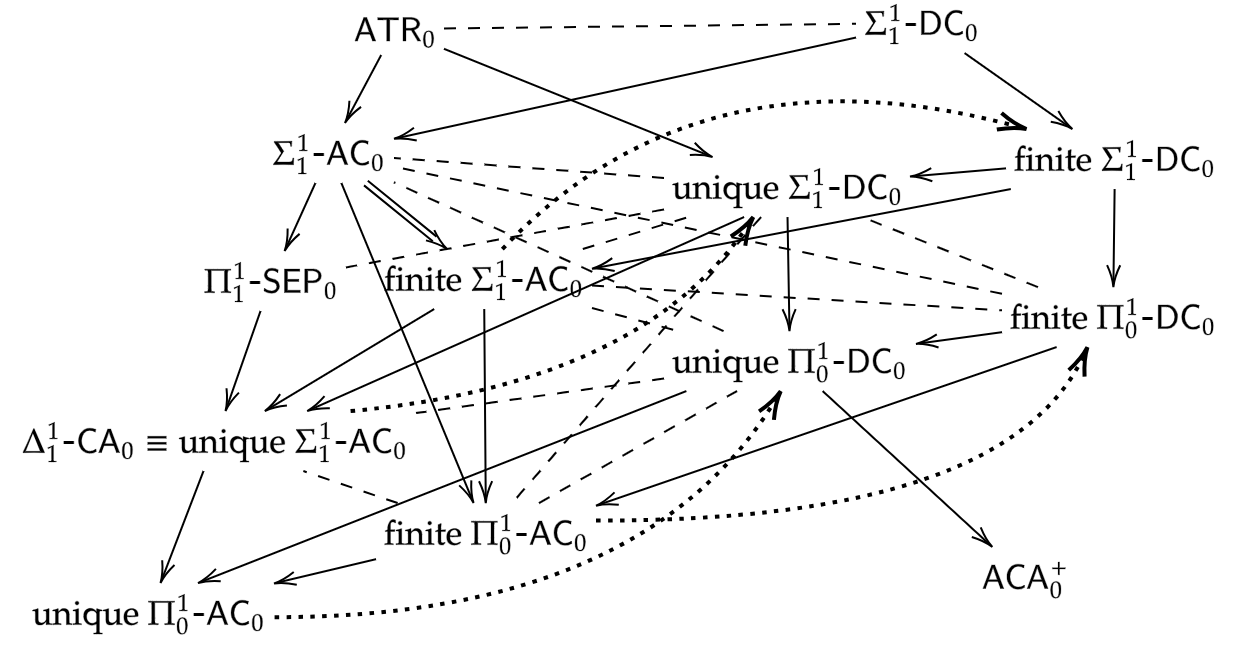}
	\caption{The obtained Reverse Mathematics Zoo: Hyperarithmetic Analysis Area}\label{figure:Reverse Mathematics Zoo: Hyperarithmetic Analysis Area}
\end{figure}

In the picture, the fact that there is a double arrow extending from $T$ to $S$ means that $S$ is provable from $T$.In the case of a single arrow, in addition to the above, it means that the reverse is unprovable.Also, the fact that they are connected by a dashed line means that either one cannot prove the other.Finally, when there is a curved dotted arrow extending from $T$ to $S$, it means that $S$ is provable from $T + (\Sigma^1_{k+1}~\mathrm{Induction})$ by appropriately choosing $k\in\omega$.

\section*{Acknowledgment}
The implication diagrams used in this paper were created using the mathematical editor Mathcha (\url{https://www.mathcha.io/editor}), introduced to me by Kaito Takayanagi.It not only facilitated organizing the results but also led to the discovery of new implications.

Valuable feedback was received from Dr.Yudai Suzuki, who pointed out an error in my initial proof of Lemma \ref{lemmma Ref(T)⇔Ref(T+￢Ref(T))}.Moreover, the formulation of this lemma was prompted during a discussion where he suggested that the lemma could potentially address a more sophisticated claim than I had initially considered.

Throughout the research, extensive guidance was received from my supervisor, Takayuki Kihara, on topics such as computability theory and the fundamentals of second-order arithmetic.Many of the results regarding $\unique\Gamma\bou\Dcz$ and $\finite\Gamma\bou\Dcz$ in this study were obtained through discussions with him.

Including the aforementioned three individuals, I would like to express my gratitude to everyone who engaged in discussions at various research meetings.

\section{Preliminaries}
The language of second-order arithmetic, $\Lt$, is a two-sorted first-order language.It is derived from the language of first-order arithmetic by adding set variables such as $X, Y, Z, \ldots$ and the membership predicate $\in$.

We introduce the notation used in this paper, along with the basic theory of second-order arithmetic.Initially, $\langle n, m \rangle$ denotes the pairing of $n$ and $m$.
\begin{align*}
 X\oplus Y &:= \set{2n | n\in X } \cup \set{2n+1 | n\in Y }.\\
 X = Y &:\leftrightarrow \forall n ( n\in X \leftrightarrow n\in Y).\\
 Y_k &:= \set{n | \braket{n,k}\in Y}.\\
 X\in Y &:\leftrightarrow \exists k\forall n( n\in X \leftrightarrow Y_k).\\
 X\times Y &:= \set{\braket{x,y} | x\in X ,y\in Y}.\\
 Y^k &:= \bigcup_{l<k} Y_l\times\{l\} = \set{\braket{x,l} | x\in Y_l ,l < k}.\\
 \Phi^X_e(x) &:= \text{The result of inputting } x \text{ into the } e\text{-th Turing machine using } X \text{ as the oracle.}
\end{align*}
When $\Phi^X_e$ is a function from $\N$ to $\{0,1\}$, we write $\Phi^X_e \in 2^\N$.

$M \subset \N$ can be viewed as the code for the family of sets $\braket{M_k}_{k \in \N}$.Then, the set $M$ can be considered as the code of $\omega$-model $\braket{\N, \braket{M_k}_{k \in \N}, +, \cdot, <, 0, 1}$.When viewed in this way, $M$ is referred to as \textit{coded $\omega$-model}.
Over $\Acaz$, for each $\Lt$ formula $\varphi$, the truth of $\varphi$ in model $M$ can be determined by taking a number of Turing Jumps corresponding to its complexity.That is, the existence of a function $f: \mathrm{Subst}^M(\varphi) \to \{\mathrm{True}, \mathrm{False}\}$ can be proven (\cite{simpson_2009} Lemma VII.2.2).Here, $\mathrm{Subst}^M$ represents the entire set of substitution instances for $\varphi$.Moreover, from the strong soundness theorem, if a $\codedom$ satisfying $\sigma$ can be constructed, then the consistency of $\sigma$ can be derived over $\Acaz$(\cite{simpson_2009} Theorem II.8.10).


The induction axiom scheme for the class of formulas $\Gamma$, denoted $\Gamma\bou\mathsf{IND}$, is as follows:
\[ \varphi(0) \land \forall n (\varphi(n) \rightarrow \varphi(n+1)) \rightarrow \forall n \varphi(n) \quad \mathrm{where}~ \varphi \in \Gamma.\]
Induction reduces the complexity of formulas.For example, in first-order arithmetic, arithmetic induction (more precisely, the collection axiom scheme) ensures that the class of arithmetic formulas are closed under bounded quantification of numbers.Similarly, over $\Rcaz + \Sigma^1_{k}\bou\Ind$, $\Sigma^1_k$ formulas are closed under bounded universal quantification of numbers.Formally, the following lemma holds.

\begin{lemma}\label{lemma:Σ1k上Σ1kは有界任意量化に閉じる}
 Let $k \in \omega$.The following hold in $\Rcaz + \Sigma^1_{k+1}\bou\Ind$:
	\[\forall n <m \exists  Y \varphi(n,Y) \rightarrow \exists Z \forall n <m \varphi(n,Z_n) ~~~ \mathrm{where}~\varphi\in \Pi^1_{k}.\]
\end{lemma}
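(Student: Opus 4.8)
The plan is to fix $m$, take the antecedent $\forall n < m\,\exists Y\,\varphi(n,Y)$ as a standing hypothesis, and assemble the required witnesses one column at a time by induction on an auxiliary bound $i \le m$. Concretely, I consider the formula $\theta(i) :\equiv \exists Z\,\forall n < i\,\varphi(n, Z_n)$, which asserts that the first $i$ instances can be realized simultaneously in the columns of a single set $Z$. Since the desired conclusion $\exists Z\,\forall n < m\,\varphi(n, Z_n)$ is precisely $\theta(m)$, it suffices to establish $\theta(i)$ for every $i \le m$.

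Before running the induction I would record the complexity of the induction formula, since this is the crux of the lemma. Substituting the column $Z_n = \{l : \langle l, n\rangle \in Z\}$ for the set variable $Y$ turns the $\Pi^1_k$ formula $\varphi(n,Y)$ into a $\Pi^1_k$ formula $\varphi(n, Z_n)$ with parameters $n, Z$, leaving the quantifier prefix untouched; prefixing the bounded number quantifier $\forall n < i$ keeps it $\Pi^1_k$, and the outer $\exists Z$ then places $\theta(i)$ in $\Sigma^1_{k+1}$. Consequently $\chi(i) :\equiv (i \le m \rightarrow \theta(i))$ is also $\Sigma^1_{k+1}$, because the $\Sigma^0_0$ hypothesis $i \le m$ carries no set variable and may be absorbed past $\exists Z$. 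Thus $\Sigma^1_{k+1}\bou\Ind$ is available for $\chi$, and this is exactly the point where the induction hypothesis of the lemma is consumed.

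Next I would carry out the induction on $\chi$. For the base case $\chi(0)$ the quantifier $\forall n < 0$ is vacuous, so $\theta(0)$ is witnessed by $Z = \emptyset$, which exists in $\Rcaz$. For the step, assuming $\chi(i)$ together with $i+1 \le m$, I obtain $\theta(i)$ from $\chi(i)$, hence a set $Z$ with $\forall n < i\,\varphi(n, Z_n)$, and from the standing antecedent (using $i < m$) a set $Y$ with $\varphi(i, Y)$. I then form $Z' = \{\langle l, n\rangle : (n < i \wedge \langle l, n\rangle \in Z) \vee (n = i \wedge l \in Y)\}$ by $\Delta^0_1$ comprehension, so that $Z'_n = Z_n$ for $n < i$ and $Z'_i = Y$; this yields $\forall n < i+1\,\varphi(n, Z'_n)$, i.e.\ $\theta(i+1)$, and hence $\chi(i+1)$. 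Concluding $\forall i\,\chi(i)$ and instantiating at $i = m$ delivers $\theta(m)$.

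The only genuinely delicate point is the bookkeeping of the second paragraph: everything turns on the induction formula $\theta$ sitting in $\Sigma^1_{k+1}$ rather than higher, which rests on bounded number quantifiers being absorbable into $\Pi^1_k$ and on the substitution $Y \mapsto Z_n$ not disturbing the quantifier prefix. The combinatorial content of the step, namely merging a witness for the first $i$ instances with a single new witness, is entirely routine and needs only recursive comprehension, so no set existence beyond $\Rcaz$ enters the step itself; all of the strength of the lemma is localized in the single appeal to $\Sigma^1_{k+1}\bou\Ind$.
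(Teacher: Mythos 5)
Your overall strategy --- fixing $m$, proving $\theta(i):\equiv \exists Z\,\forall n<i\,\varphi(n,Z_n)$ for all $i\le m$ by induction on $i$, with base case $Z=\varnothing$ and a merging step carried out by $\Delta^0_1$ comprehension --- is the standard argument; the paper in fact states this lemma \emph{without} proof (it appears in the preliminaries with only the surrounding remark that induction yields closure under bounded quantification), so your write-up supplies essentially the proof the paper leaves implicit. The base case, the merging of the old witness $Z$ with the new witness $Y$ into $Z'$, the guard formula $\chi(i):\equiv(i\le m\rightarrow\theta(i))$, and the localization of all logical strength in a single appeal to $\Sigma^1_{k+1}\bou\Ind$ are all correct.

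There is, however, one genuine gap, and it sits exactly where you yourself placed the crux: the claim that prefixing $\forall n<i$ to a $\Pi^1_k$ formula ``keeps it $\Pi^1_k$'' is a purely syntactic triviality only for $k\le 1$, where the bounded number quantifier commutes with the leading universal set quantifier and is then absorbed into the arithmetic matrix. For $k\ge 2$, after commuting $\forall n<i$ past the leading $\forall X$ you face $\forall n<i\,\exists Y\,\psi(n,X,Y)$ with $\psi\in\Pi^1_{k-2}$, and converting this to $\exists Z'\,\forall n<i\,\psi(n,X,Z'_n)$ is nothing other than the lemma itself at level $k-2$, available only over $\Sigma^1_{k-1}\bou\Ind$. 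So, as written, your complexity bookkeeping for $\theta(i)$ is circular. The repair is routine but must be stated: argue by external (meta-level) induction on $k$, using the already-established instances at levels at most $k-2$ to show that $\theta(i)$, and hence $\chi(i)$, is \emph{provably equivalent} over $\Rcaz+\Sigma^1_{k+1}\bou\Ind$ (which implies all the lower induction schemes needed) to a genuine $\Sigma^1_{k+1}$ formula; provable equivalence suffices for applying the induction scheme. With that meta-induction added your proof is complete, and note that the case $k=0$ --- the instance the paper invokes most often, e.g.\ in Lemma 3.6 --- is unaffected by this issue.
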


\subsection{Tree notation}
In $\Rcaz$, fix a coding of finite sequences by primitive recursive functions, and denote the set of all finite sequences as $\N^{<\N} (\subset \N)$.A set $T \subset \N^{<\N}$ is called a tree if it is closed under initial segments.For $\sigma \in T$, let $|\sigma|$ denote the length of the finite sequence $\sigma$ (e.g., $|\langle 0,3,1,6 \rangle| = 4$).Operations such as the concatenation of finite sequences, $\sigma^\frown\tau$, are naturally defined.In particular, to explicitly denote that $\sigma$ is an element of a binary tree of length $l$, the notation $\sigma \in 2^l$ is used.A subset $X \subset \N$ is a path through $T$ if $\forall n (X[n] \in T)$ holds, where $X[n]$ is the finite sequence $\langle X(0), \ldots, X(n-1) \rangle$.Of course, $X(i) = \begin{cases} 1 & \text{if } i \in X \\ 0 & \text{otherwise} \end{cases}$.Following this notation of $X[i]$ and $X(i)$, $\sigma[i], \sigma(i)$ for a finite sequence $\sigma$ are similarly defined.




\subsection{Formalized hyperarithmetical sets}
In this paper, we define hyperarithmetical sets within formal systems through the coding of computable well-orderings.For details, refer to \cite{simpson_2009}, Chapter VIII.

\begin{definition}
We denote by $a \in \ko^X_+$ the following $\Lt$ formula that has only $a$ and $X$ as free variables:
\[\exists e,i\le a [ \braket{e,i} = a \land \Phi^X_e \in (2^\N\cap \mathrm{LO}) \land \Phi^X_e(\braket{i,i}) = 1 ].\]
In the above, $Y \in \mathrm{LO}$ means that $Y$ is the code of a linear ordering.
\end{definition}

\begin{definition}
The left-hand side is abbreviated as the right-hand side as follows:
\begin{itemize}
  \item  $i \le^X_e j :\leftrightarrow  \braket{e,j} \in \ko^X_+ \land \Phi^X_e(\braket{i,j})=1 $.
  \item $a \le^X_\ko b :\leftrightarrow \exists e,i,j( \braket{e,i} = a \land \braket{e,j} = b \land i\le^X_e j)$.
  \item $a <^X_\ko b :\leftrightarrow a \le^X_\ko b \land a\neq b$.
  \item $a \in \ko^X :\leftrightarrow a\in \ko^X_+ \land  \lnot \exists f\in\N^\N\forall n (f(n+1)<^X_\ko f(n) <^X_\ko a)$.
\end{itemize}
\end{definition}

\begin{remark}\label{remark:クリーねのOに関する論理式の複雑さ}
The statement $a \in \ko^X_{+}$ is $\Pi^0_2$  and $a \in \ko^X$ is $\Pi^1_1$ over $\Acaz$.Furthermore, when $a \in \ko^X_+$, the set $\{\, b \mid b \le^X_\ko a \,\}$ is $\Sigma^0_1$ definable.
\end{remark}


\begin{definition}[H set]
For $a$ such that $a \in \ko^X$, we denote the set obtained by starting with $X$ and repeatedly applying the Turing jump $a$ times (if possible) as $\mathrm{H}^X_a$\footnote{Several natural definitions can be considered.For example, $\mathrm{H}^X_a = \bigcup_{b <^X_\ko a} (X \oplus \TJ(\mathrm{H}^X_b)) \times \{b\}$ might be one of them.}.Furthermore, the assertion that such $\mathrm{H}^X_a$ always exists if $X$ is a well-order is equivalent to $\ATR$ over $\Rcaz$.Therefore, in this paper, we define $\Atrz$ as this assertion $+\Rcaz$.

\end{definition}


\subsection{Theories of hyperarithmetic analysis}




The top four axioms schemes below have been extensively studied in the context of hyperarithmetic analysis since its earliest stages around the 1970s.
In contrast, the bottom-most axiom, $\JI$, is relatively new and was introduced in \cite{Montalbn2006IndecomposableLO} \footnote{
Although the original and our $\JI$ appear different, they are equivalent.
}.
\begin{align*}
	\Sigma^1_1\bou\mathrm{DC}~:~&\forall X \exists  Y \varphi(X,Y) \rightarrow  \exists Y  \forall n \varphi(Y^n ,Y_{n}) ~~~ \mathrm{where}~\varphi\in \Sigma^1_1.\\
	\Sigma^1_1\bou\mathrm{AC}~:~& \forall n  \exists  Y \varphi(n,Y) \rightarrow \exists Z \forall n \varphi(n,Z_n) ~~~ \mathrm{where}~\varphi\in \Sigma^1_1.\\
	\Delta^1_1\bou\mathrm{CA}~:~& \forall n (\varphi(n)\leftrightarrow \psi(n)) \rightarrow  \exists \set{n|\varphi(n)} ~~~ \mathrm{where}~\varphi\in \Sigma^1_1,\psi\in\Pi^1_1.\\
	\unique\Pi^1_0\bou\mathrm{AC}~:~& \forall n  \exists ! Y \varphi(n,Y) \rightarrow \exists Z \forall n \varphi(n,Z_n) ~~~ \mathrm{where}~\varphi\in \Pi^1_0.\\ 
	\JI~:~&\forall X[\forall a\in\ko^X((\forall b <_{\ko^X}a	\exists\mathrm{H}^X_b) \rightarrow \exists\mathrm{H}^X_a)].
	\end{align*}
As usual, we denote the theories obtained by adding $\Rcaz$ to these by appending $0$ below each.
$\mathsf{AC}$ stands for the Axiom of Choice, and $\JI$ for Jump Iteration.

These theories become strictly weaker from top to bottom.
Even in the weakest theory $\JI_0$ among the above, $\Acaz$ is implied, thus all the theories listed above imply $\Acaz$.
Simply proving the implication from the theory above to the theory below is easy.On the other hand, it is often necessary to construct an $\omega$-model to separate these theories, which is generally difficult \footnote{
	An $\omega$-model for separating $\Sigma^1_1\bou\mathrm{DC}_0$ and $\Sigma^1_1\bou\mathrm{AC}_0$ can relatively easily be proven to exist using a well known theorem : $\omega$-model incompleteness (\cite{simpson_2009} Theorem VIII.5.6).For separating other theories, all $\omega$-models are constructed using the technique called tagged tree forcing.
	}.
\begin{remark}\label{remark:Yamero_Sigma11_Pi10}
$\unique \Pi^1_0\bou\Acz$ has been referred to as $\weak\Sigma^1_1\bou\Acz$ in various literature so far.
However, in practice, it is easily proven that $\unique \Sigma^1_1\bou\Acz$ is equivalent to $\Delta^1_1\bou\Caz$, and this is stronger than $\unique \Pi^1_0\bou\Acz$.
Therefore, despite the essential difference between $\unique \Pi^1_0\bou\Acz$ and $\unique \Sigma^1_1\bou\Acz$, many researchers have probably followed the notation using \textbf{weak} that confuses the two, with some resistance, because it is customary.
As no particular benefits are found in this conventional notation, it is about time to consider a refresh.
\end{remark}
\begin{proposition}
The following equivalences hold of $\unique\Gamma\bou\Acz$.
\begin{enumerate}
	\item $\unique\Pi^0_1\bou\Acz \equiv \Wklz$~(folklore?).
	\item $\unique\Sigma^0_2\bou\Acz \equiv \Acaz$.
	\item $\unique\Pi^0_2\bou\Acz \equiv \unique\Pi^1_0\bou\Acz $.\label{equivalence:Pi2andarith}
\end{enumerate}
\end{proposition}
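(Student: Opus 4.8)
The plan is to prove each equivalence by two inclusions, guided by the principle that a $\unique\Gamma\bou\Acz$ instance asks us to gather the model-internal unique solutions $Y_n$ of $\varphi(n,\cdot)$ into a single $Z$, so the strength is governed by how complex such a unique solution can be. When $\Gamma=\Pi^0_1$ each $\{Y:\varphi(n,Y)\}$ is the set of paths of a tree, so the relevant tool is $\Wklz$; when $\Gamma=\Sigma^0_2$ it is an $\emptyset'$-effective union of such path sets, so the relevant tool is $\Acaz$ (which is $\Wklz$ together with the jump); and when $\Gamma=\Pi^0_2$ genuine quantifier nesting appears, which is why the strength leaps to full arithmetic choice. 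The recurring difficulty is that internal uniqueness ($M\models\exists! Y\,\varphi(n,Y)$) is far weaker than external uniqueness, so one may not simply invoke the classical fact that a tree with a unique path has a computable path.

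For (1), the direction $\Wklz\to\unique\Pi^0_1\bou\Acz$ I would prove by a product-tree argument: letting $T_n$ satisfy $[T_n]=\{Y:\varphi(n,Y)\}$, form the tree $S$ of finite approximations to sequences $\langle Y_n\rangle$ whose $n$-th column lies in $T_n$; then $S$ is infinite because each $T_n$ is, $\Wklz$ produces a path, and internal uniqueness identifies it as $\bigoplus_n Y_n$. The converse $\unique\Pi^0_1\bou\Acz\to\Wklz$ is the hard direction, which I would reduce to $\Sigma^0_1$-separation: given disjoint $\Sigma^0_1$ sets $A,B$, attach to each $n$ a $\Pi^0_1$ formula whose model-unique solution records on which side of a separator $n$ falls, arranged so that the collected $Z$ is a separator. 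The main obstacle lives exactly here. Since a separator of $A,B$ need not be computable, the instance cannot be provably uniquely solvable over $\Rcaz$ alone (external uniqueness in every $\omega$-model would force a computable, hence already-present, $Z$), so one must engineer the $\Pi^0_1$ conditions so that uniqueness holds in the ambient model while the join genuinely escapes $\Wklz$-less models; the delicate interplay of internal versus external uniqueness, together with the use of disjointness of $A,B$ to pin down the surviving branch, is where the real work lies.

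For (2), I would prove $\unique\Sigma^0_2\bou\Acz\to\Acaz$ by producing the Turing jump. Because a $\Sigma^0_1$ disjunct can be paired with a $\Pi^0_1$ disjunct inside $\Sigma^0_2$, each bit $\chi_{X'}(k)$ is specified by the $\Sigma^0_2$ formula $\varphi(k,Y):\equiv Y\subseteq\{0\}\land(0\in Y\leftrightarrow \Phi^X_k(k)\!\downarrow)$ (a conjunction of a $\Sigma^0_1$ and a $\Pi^0_1$ formula), whose unique solution is $\{0\}$ or $\emptyset$; collecting the $Y_k$ yields $X'$ as a $\Delta^0_1(Z)$ set, so every jump exists and $\Acaz$ follows. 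For $\Acaz\to\unique\Sigma^0_2\bou\Acz$ I would write $\{Y:\varphi(n,Y)\}=\bigcup_a[T_{n,a}]$ with $T_{n,a}$ uniformly recursive, use $\emptyset'$ (available in $\Acaz$) to find an $a$ with $T_{n,a}$ infinite in the model, and then recover its unique model-path by the $\Wklz$-search, which terminates because $\Acaz\vdash\Wklz$ makes the rival branches finite; this makes the $Y_n$ uniformly computable from the jump of the parameters, so $\bigoplus_n Y_n$ exists by arithmetical comprehension. The feature separating this from (3) is that the inner $\forall b$ of a $\Sigma^0_2$ matrix keeps each piece a tree, which $\Wklz$ can handle.

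For (3) the inclusion $\unique\Pi^1_0\bou\Acz\to\unique\Pi^0_2\bou\Acz$ is immediate since $\Pi^0_2\subseteq\Pi^1_0$. For the substantive direction I would first observe $\unique\Pi^0_2\bou\Acz\to\Acaz$: a $\Sigma^0_2$ instance reduces to a $\Pi^0_2$ one by recording the least witness, as ``$a$ is least with $\forall b\,\theta(n,a,b,Y)$'' is a conjunction of a $\Pi^0_1$ and a (bounded) $\Sigma^0_1$ formula, hence $\Pi^0_2$, so part (2) applies. With $\Acaz$ and thus all jumps available, I would reduce an arbitrary arithmetic instance $\forall n\exists! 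Y\,\varphi(n,Y)$ to a $\Pi^0_2$ one by Skolemising: expand the chosen object to $\hat Y=Y\oplus f$, where $f$ codes the canonical (leftmost/least) witnessing run for the existential quantifiers of $\varphi(n,Y)$, so that correctness of the witnessing becomes a universal statement driving the matrix down to $\Pi^0_2$, while $\hat Y$ stays unique because $Y$ is unique and the run is canonical; applying $\unique\Pi^0_2\bou\Acz$ and projecting recovers $Z$. The main obstacle is keeping the joint correctness-and-canonicity of the nested witnessing expressible at the $\Pi^0_2$ level: for a single alternation the least Skolem function is $\Pi^0_2$-definable outright, but for deeper formulas one must interleave the jumps supplied by $\Acaz$ with the recorded witnesses (an induction on quantifier depth), and checking that each step preserves both the $\Pi^0_2$ bound and unique-solvability is the crux.
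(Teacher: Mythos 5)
Your parts (2) and (3) are essentially sound: in (2) your halting-bit gadget $Y\subseteq\{0\}\land(0\in Y\leftrightarrow\Phi^X_k(k)\!\downarrow)$ is the same device as the paper's (the paper records the least witness of a $\Sigma^0_1$ fact instead, again a provably uniquely solvable conjunction of a $\Sigma^0_1$ and a $\Pi^0_1$ condition), and both treat the reverse direction as standard; in (3) the paper only cites \cite{SuzukiweakAC}, and your route --- first extract $\Acaz$, then pad the unique solution with its jump hierarchy, exploiting that the graph $W=V'$ of the Turing jump is $\Pi^0_2$-definable so the padded instance remains $\Pi^0_2$ and remains uniquely solvable --- is the correct and completable argument; I would only advise dropping the ``canonical Skolem run'' phrasing, since minimality of nested witnesses is not $\Pi^0_2$-expressible, whereas the jump-column formulation needs no canonicity clause at all.

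The genuine gap is the hard direction of (1), $\unique\Pi^0_1\bou\Acz\vdash\Wklz$, and it is exactly where you say ``the real work lies'': your separation plan has no mechanism to make the instances uniquely solvable. A $\Pi^0_1$ class of separators of disjoint $\Sigma^0_1$ sets is essentially never a singleton, and an instance of $\unique\Pi^0_1\bou\Acz$ is inert until $\forall n\,\exists!Y\,\varphi(n,Y)$ actually holds, so no direct rigging of separator-recording formulas can work. The paper's missing idea is a reductio that manufactures uniqueness from the failure of the conclusion itself: assume $\lnot\Wklz$ and fix an infinite binary tree $T_E$ with \emph{no} path; for a $\Sigma^0_1$ formula $\exists t\,\theta(n,t)$, build $T_n$ consisting of an all-$0$ spine that survives only while no witness has appeared, together with a $1$-side copy of $T_E$ that, as soon as a witness $s$ appears, is diverted from the leftmost node of $T_E$ at height $s$ into a single infinite spine of $0$'s. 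Since $T_E$ is infinite but pathless, each $T_n$ is infinite with \emph{exactly one} path, whose first bit decides $\exists t\,\theta(n,t)$; one application of $\unique\Pi^0_1\bou\Acz$ to ``$X$ is a path through $T_n$'' then yields $\Sigma^0_1$ comprehension, hence $\Acaz$, hence $\Wklz$ --- contradicting $\lnot\Wklz$. Without this pathless-tree pruning gadget (or an equivalent), your proof of (1) does not go through; your easy direction via the product tree is fine and is precisely what the paper leaves implicit.
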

\begin{proof}
1.It suffices to show that $\unique\Pi^0_1\bou\Acz \vdash \Wklz$.
Assume $\unique\Pi^0_1\bou\Acz \not\vdash \Wklz$.For the sake of contradiction, We will show $\unique\Pi^0_1\bou\Acz + \lnot \Wklz \vdash \Sigma^0_1\bou\Caz$.
Let $\theta(n,t)$ be $\Delta^0_0$.For simplicity, we assume that if there exists $t$ such that $\theta(n,t)$ then the witness $t$ is unique.

First, by $\lnot \Wklz$, we take a infinite binary tree $T_E$ with no path.Next, we define a sequence of recursive trees, $\{T_n\}_{n \in \N}$, each with exactly one path, as follows.
Note that at each step $t$, a finite number of vertices are added at height $t$.

We search for $t$ satisfying $\theta(n,t)$ for $t = 0, 1, 2, \ldots$.
And then, while it seems that $\exists t \theta(n,t)$ is false, that is, while $\lnot \theta(n,t)$ holds for $t$, we grow trees as $0,00,000,0000,...$.
If at some step $t$ $\theta(n,t)$ holds, then we start growing as $\sigma^\frown 0,\sigma^\frown \braket{00},\sigma^\frown \braket{000},\sigma^\frown \braket{0000},...$from the leftmost vertex $\sigma$ among the vertices at height $t$ in $T_E$.
The following is a formal representation of the above construction in $\Lt$ formula:
\begin{align*}
	\sigma \in T_n &\leftrightarrow \\
	\sigma = \braket{ }\lor &[\forall l<|\sigma|(\sigma(l)=0) \land \forall l < |\sigma| \lnot \theta(n,l) ] \lor [ \sigma(0)=1 \land \{\exists \tau ( \sigma = 1^\frown \tau \land \\
	&[([\forall l < |\tau| \lnot \theta(n,l) \land \tau \in T_E ])\lor( \exists  s <|\tau| (\theta(n,s) \land \tau[s]= \sigma_{LM(t) \mathrm{in} T_E } \\
	&~~~~~~~~~~~~~~~~~~~~~~~~~~~~~~~~\land \forall t(s \le t < |\tau|\rightarrow \tau(t)= 0  )  ) ) ] ) \} ].	
\end{align*}
Here, $\sigma_{LM(t) \mathrm{in} T_E}$ represents the leftmost vertex at height $t$ in $T_E$.

Since $T_E$ does not have any paths, but is an infinite tree nonetheless, each $T_n$ is an infinite tree with exactly one path.
Therefore, $\forall n \exists ! X \forall k (X[k]\in T_n) $, so by assumption, we can take $\braket{ X_n}_{n\in \N}$ such that $\forall n (X_n[k]\in T_n)$ holds.
Thus, $\set{n | X_n(0) = 1 } = \set{n | \exists t \theta(n,t) }$ holds.
2.Since the reverse is well known, it suffices to show that $\unique\Sigma^0_2\bou\Acz$ implies $\Acaz$.To see this, we prove $\unique\Sigma^0_2\bou\Acz \vdash \Sigma^0_1\bou\Caz$.
Let $\varphi(n)$ be $\Sigma^0_1$.Write $\varphi(n)$ as $\exists t\theta(n,t)$ where $\theta(n,t)$ is $\Delta^0_0$.
Define a $\Sigma^0_2$ formula $\psi(n,X)$ by
\[ [X= \{0\} \land \lnot \exists t \theta(n,t) ]\lor \exists t [ X= \{t+1\}\land \theta(n,t) \land \forall s<t \lnot \theta(n,s) ].\]
It is easy to check that $\forall n\exists ! X \psi(n,X)$, so we can take a set $X = \braket{X_n}_{n\in \N}$ such that $\forall n \psi(n,X_n)$ .
Then $\set{n | 0\not\in X_n } = \set{n |\exists t \theta(n,t) }$.
3.\cite{SuzukiweakAC} Theorem 6.
\end{proof}

\begin{proposition}
$\JI_0$ and $\Sigma^1_1\bou\Dcz$ are hyperarithmetic analysis.
Furthermore, since a theory whose strength lies between two theories of hyperarithmetic analysis is also hyperarithmetic analysis, theories $\Sigma^1_1\bou\mathrm{AC}_0$, $\Delta^1_1\bou\mathrm{CA}_0$ and $\unique\Pi^1_0\bou\mathrm{AC}_0$ are also hyperarithmetic analysis.
\end{proposition}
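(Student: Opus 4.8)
The plan is to exploit the monotonicity of the two defining clauses together with the easy forward implications recorded above, namely (over $\Rcaz$)
$$\Sigma^1_1\bou\Dcz \vdash \Sigma^1_1\bou\Acz \vdash \Delta^1_1\bou\Caz \vdash \unique\Pi^1_0\bou\Acz \vdash \JI_0.$$
The first clause ($\HYP(X)$ is a model) is preserved \emph{downward} under provability: since $\HYP(X)\models\Rcaz$ always, if $T'+\Rcaz\vdash T$ and $\HYP(X)\models T'$, then $\HYP(X)\models T$. The second clause (every $\omega$-model is closed under computable union and hyperarithmetic reduction) is preserved \emph{upward}: if $T+\Rcaz\vdash T'$ and the clause holds for $T'$, it holds for $T$, because every $\omega$-model of $T$ is an $\omega$-model of $T'$. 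Hence it suffices to verify the first clause for the strongest theory $\Sigma^1_1\bou\Dcz$ and the second clause for the weakest theory $\JI_0$; both clauses then hold simultaneously for all five theories, which is precisely the ``sandwich'' remark of the statement, and in particular settles the three intermediate theories.

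For the second clause for $\JI_0$, fix an $\omega$-model $M\models\JI_0$. Since $\JI_0\vdash\Acaz\vdash\Rcaz$, $M$ is automatically closed under Turing reducibility and join, so closure under computable union is immediate and only closure under hyperarithmetic reduction requires argument. Suppose $X\in M$ and $Z\le_h X$, witnessed in the real world by a genuine $a\in\cO^X$ with $Z\le_T\rH^X_a$. Because ``$a\in\cO^X$'' is $\Pi^1_1$ (Remark \ref{remark:クリーねのOに関する論理式の複雑さ}) and holds in $V$, it holds in $M$ as well, as any witnessing descending sequence would lie in $V\supseteq M$. I would then run an external transfinite induction along the genuine well-order $<_{\cO^X}$ restricted below $a$: if some $b\le_{\cO^X}a$ had $\rH^X_b\notin M$, take the $<_{\cO^X}$-least such $b$ and apply the $\JI_0$-instance inside $M$ at $b$ to conclude $\rH^X_b\in M$, a contradiction. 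Here the $H$-sets computed inside $M$ coincide with the genuine ones, since the Turing jump is computed correctly in models of $\Acaz$. Thus $\rH^X_a\in M$, and by Turing-closure $Z\in M$.

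The substantive obstacle is the first clause for $\Sigma^1_1\bou\Dcz$, that is, $\HYP(X)\models\Sigma^1_1\bou\Dcz$ for every $X$. Given a $\Sigma^1_1$ formula $\varphi$ (parameters absorbed into $X$) with $\HYP(X)\models\forall W\exists Y\,\varphi(W,Y)$, I must produce a single $Y=\langle Y_n\rangle$ hyperarithmetic in $X$ with $\varphi(Y^n,Y_n)$ for all $n$. The naive stage-by-stage construction fails, because iterating ``hyperarithmetic-in'' $\omega$ times can push the associated ordinals cofinally into $\omega_1^X$, so that the resulting join need not be hyperarithmetic in $X$. The correct route is the classical Kreisel--Gandy machinery: use $\Sigma^1_1$ selection to choose witnesses uniformly and $\Sigma^1_1$-bounding to confine all the relevant ordinals below a fixed $a\in\cO^X$, so that the entire $\omega$-sequence is computable from a single $\rH^X_a$ and therefore lies in $\HYP(X)$. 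This is where the real work is concentrated, and I would either reproduce this argument or invoke the standard fact (see \cite{simpson_2009}, Chapter VIII, and \cite{Montalbn2006IndecomposableLO}) that $\HYP(X)$ is a model of $\Sigma^1_1\bou\Dcz$.

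Combining the three steps: the first clause holds for all five theories by downward preservation from $\Sigma^1_1\bou\Dcz$, and the second clause holds for all five by upward preservation from $\JI_0$. Therefore $\JI_0$, $\Sigma^1_1\bou\Dcz$, and the intermediate theories $\Sigma^1_1\bou\Acz$, $\Delta^1_1\bou\Caz$, $\unique\Pi^1_0\bou\Acz$ are all theories of hyperarithmetic analysis. I expect essentially all of the difficulty to lie in establishing $\HYP(X)\models\Sigma^1_1\bou\Dcz$; the remaining ingredients are either implications already stated in the text or the soft preservation arguments above.
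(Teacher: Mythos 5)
Your proposal is correct and takes essentially the same approach as the paper: both reduce the claim via the implication chain to verifying just (i) $\HYP(X)\models\Sigma^1_1\bou\Dcz$, which you and the paper alike delegate to Simpson (Theorem VIII.4.11), and (ii) closure of $\omega$-models of $\JI_0$ under hyperarithmetic reduction, proved by meta-level transfinite induction along $<^X_\ko$ using downward absoluteness of the $\Pi^1_1$ statement $a\in\ko^X$ and arithmetical absoluteness of the ordering and the $\rH$-sets. Your least-counterexample phrasing is just the paper's transfinite induction in different clothing, so there is nothing substantively new or missing.
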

\begin{proof}
Since $\Sigma^1_1\bou\Dcz\vdash\JI$ holds, it is sufficient to verify the following two.
\begin{enumerate}
  \item every $X\subset \omega$, $\HYP(X)\models \Sigma^1_1\bou\Dcz$.
\item every $\omega$-model of $\JI$ is closed under computable union and hyperarithmetic reducibility.
\end{enumerate}
	$1.$  \cite{simpson_2009} Theorem VIII.4.11.
	$2.$ Let $M$ be an $\omega$-model of $\JI_0$, and let $X \in M$ be fixed.
	Since \( M \) is specifically a model of \( \mathcal{A}_0 \), it is closed under Turing reducibility.
	Therefore, it is sufficient to show that for all $a \in \ko^X$, $\mathrm{H}^X_a \in M$ holds.
	Here, it should be noted that $\ko^X$ is not on $M$, but rather at the meta-level.

We show the previous assertion by transfinite induction on $<_\ko^X$ at the meta-level.
First, by the absoluteness of arithmetic formulas over the $\omega$-model, since for each $a \in \mathcal{O}^X$, $M \models a \in \mathcal{O}^X$ holds, it follows from Remark~\ref{remark:クリーねのOに関する論理式の複雑さ} that
\[
\{ b \in \omega \mid b <_{\mathcal{O}^X} a \} = \{ b \in \omega \mid M \models b <_{\mathcal{O}^X} a \} \in M.
\]
Therefore, by the inductive hypothesis $\forall b<_{\ko^X} a( \mathrm{H}^X_b \in M)$, $M\models \forall b<_{\ko^X} a\exists  \mathrm{H}^X_b $ is implied, and from $M\models \JI$, $\mathrm{H}^X_a \in M$ can be obtained.
\end{proof}
\begin{remark}
To the best of the author's knowledge, all hyperarithmetic theories defined axiomatically lie between $\JI_0$ and $\Sigma^1_1\bou\Dcz$.
\end{remark}

The five theories introduced above are each separated by the $\omega$-model.Among them, there are several $\omega$-models that will be used in the following sections.Therefore, along with introducing these models, let's also introduce previous research.

\begin{theorem}[\cite{VanWesep1977} Theorem 1.1]\label{theorem:M_w}
	There exists an $\omega$-model, denoted as $M_w$, satisfying $\unique\Pi^1_0\bou\Acz$ and not satisfying $\Delta^1_1\bou\Caz$.
\end{theorem}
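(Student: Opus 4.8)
The plan is to realize $M_w$ as a countable coded $\omega$-model, that is, as a Turing ideal $\mathcal{S}\subseteq 2^{\N}$ closed under $\oplus$, under $\le_T$, and under the Turing jump, so that automatically $\mathcal{S}\models\Acaz$; the two requirements then become closure properties of $\mathcal{S}$. Since the truth of an arithmetic formula in an $\omega$-model is decided by finitely many jumps, $\mathcal{S}\models\unique\Pi^1_0\bou\Acz$ unwinds to the demand that whenever $\varphi(n,Y)$ is arithmetic with parameters in $\mathcal{S}$ and $\mathcal{S}\models\forall n\exists!Y\,\varphi(n,Y)$, the join $\bigoplus_n Z_n$ of the model-internal witnesses already lies in $\mathcal{S}$. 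For the negative requirement I would exploit Remark~\ref{remark:Yamero_Sigma11_Pi10}: since $\Delta^1_1\bou\Caz$ is equivalent to $\unique\Sigma^1_1\bou\Acz$, making $\Delta^1_1\bou\Caz$ fail in $\mathcal{S}$ amounts to producing a single $\Sigma^1_1$ formula $\varphi$ with parameters in $\mathcal{S}$ such that $\mathcal{S}\models\forall n\exists!Y\,\varphi(n,Y)$ while no witnessing sequence belongs to $\mathcal{S}$. Thus the whole problem is recast as: close $\mathcal{S}$ under unique choice for \emph{arithmetic} matrices, yet leave at least one unique-choice instance for a $\Sigma^1_1$ matrix unfulfilled.

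The first structural observation is that $\mathcal{S}$ must fail to be a $\beta$-model. In a $\beta$-model $\Sigma^1_1$ and $\Pi^1_1$ statements are absolute, so a genuinely unique $\Sigma^1_1$ choice would give a genuinely $\Delta^1_1$ witnessing sequence; as every $\omega$-model of a theory of hyperarithmetic analysis is closed under hyperarithmetic reducibility, that sequence would already be present and $\Delta^1_1\bou\Caz$ would hold. Hence the unfulfilled instance must have only \emph{apparent} uniqueness --- uniqueness that $\mathcal{S}$ perceives but that fails in $V$ --- and the canonical device for creating such non-absoluteness is a pseudo-well-order. I would therefore base the construction on a recursive linear order $L$ that is ill-founded but has no hyperarithmetic infinite descending sequence (a Harrison order), run the $\rH$-style jump hierarchy $\langle\rH_a\rangle$ along its accessible part, and let the offending $\Sigma^1_1$ instance ask for a hierarchy-like object sitting in the ill-founded region of $L$. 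Because $\mathcal{S}$ will contain no descending sequence of $L$, it will regard $L$ as well-founded and the instance as having a unique solution, whereas any actual solution in $\mathcal{S}$ would expose a descending sequence and is therefore excluded. The automatic lower bound $\unique\Pi^1_0\bou\Acz\vdash\JI$ is what guarantees the hierarchy can be carried at all.

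For the construction proper I would use a forcing/generic-closure argument --- tagged-tree (Steel) forcing is the natural engine --- whose conditions are finite approximations living below the pseudo-hierarchy. Enumerating all potential instances of $\unique\Pi^1_0\bou\Acz$, at the relevant stages I would adjoin the witnessing sequence for each arithmetic instance that the current approximation makes (apparently) unique, while the ``tagging'' ensures that no condition ever commits to an infinite descending chain through $L$. The resulting $\mathcal{S}$ is then closed under arithmetic unique choice, contains the accessible part of the hierarchy, but omits every descending sequence of $L$ and in particular the solution to the distinguished $\Sigma^1_1$ instance; unwinding the reductions of the first paragraph gives $\mathcal{S}\models\Acaz+\unique\Pi^1_0\bou\Acz$ and $\mathcal{S}\not\models\Delta^1_1\bou\Caz$.

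The main obstacle is the preservation step reconciling the two opposing pressures. Closing under arithmetic unique choice forces $\mathcal{S}$ to be large, whereas keeping the $\Sigma^1_1$ instance unfulfilled forces it to omit a specific pseudo-hierarchy object together with all descending sequences of $L$; one must show these never collide. The delicate point is that an arithmetic unique-choice instance whose uniqueness is itself only apparent could, if handled carelessly, inject a real that decides the forbidden object or collapses the pseudo-well-order. Controlling this is exactly what the well-foundedness bookkeeping of tagged-tree forcing is for: I would verify that the tree of conditions stays free of the relevant descending chains, so that every adjoined witness remains hyperarithmetically below some accessible $\rH_a$ and hence cannot compute a descending sequence of $L$. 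Establishing this single preservation property is where essentially all the work lies; the verification that $\mathcal{S}$ satisfies the two displayed closure conditions is then routine.
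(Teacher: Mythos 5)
The paper itself does not prove this theorem: it is imported verbatim from Van Wesep's thesis, and the surrounding text notes that separations at this level are obtained by Steel-style tagged-tree forcing. Measured against that standard construction, your framing is on target: recasting both requirements as closure properties of a jump-closed Turing ideal, using the equivalence of $\Delta^1_1\bou\Caz$ with $\unique\Sigma^1_1\bou\Acz$ (Remark \ref{remark:Yamero_Sigma11_Pi10}) to reduce the negative requirement to a single unfulfilled unique-$\Sigma^1_1$ instance, observing that $M_w$ cannot be a $\beta$-model, and sourcing the merely internal uniqueness from a Harrison order via tagged-tree forcing --- all of this matches the architecture of the actual proof.

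Nevertheless there is a genuine gap: the proposal stops exactly where the theorem begins, and two of its concrete steps would fail as stated. First, the scheme ``at the relevant stages adjoin the witnessing sequence for each arithmetic instance that the current approximation makes (apparently) unique'' is not well defined: whether $\forall n\exists !Y\,\varphi(n,Y)$ holds is a $\Pi^1_2$ property of the \emph{final} model, not of any finite-stage approximation, and instances can acquire or lose uniqueness as reals are added. The known arguments do not iterate a closure operator at all; one forces once and then proves, via Steel's retagging/homogeneity lemma, that any solution in the generic model of an instance of unique arithmetic choice is already hyperarithmetic in finitely many columns of the generic together with ground-model parameters, hence the witnessing sequence lies in the model --- this retagging lemma \emph{is} the preservation property you defer, and your sketch offers no substitute for it beyond ``bookkeeping.'' Second, your stated invariant that every adjoined witness ``remains hyperarithmetically below some accessible $\rH_a$'' is inconsistent with the goal: each such $\rH_a$ is itself hyperarithmetic, so the invariant would force $M_w\subset\HYP$, whence $M_w=\HYP\models\Delta^1_1\bou\Caz$. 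Since $\HYP(X)\models\Delta^1_1\bou\Caz$ for every $X$, the model must contain reals hyperarithmetic in no ground parameter (namely the generics), and the correct statement is the subtler one that no member of the model computes a descending sequence through $L$ --- which, again, is precisely what the retagging analysis establishes. So the outline points at the right machine, but the proof's essential content is named rather than supplied.
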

\begin{theorem}[\cite{Friedman1967SubsystemsOS} II.4 Theorem 2]
There exists an $\omega$-model satisfies $\Sigma^1_1\bou\Acz$ and does not satisfies $\Sigma^1_1\bou\Dcz$.
\end{theorem}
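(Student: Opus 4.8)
The plan is to exhibit a single countable coded $\omega$-model $M$ with $M\models\Sigma^1_1\bou\Acz$ and $M\not\models\Sigma^1_1\bou\Dcz$. Two preliminary observations shape the construction. First, $M$ cannot be the minimum $\omega$-model $\HYP$: since $\Sigma^1_1\bou\Dcz$ is a theory of hyperarithmetic analysis, $\HYP\models\Sigma^1_1\bou\Dcz$ (\cite{simpson_2009} VIII.4.11), so any separating model must sit strictly above $\HYP$. Second, the witness to the failure of $\Sigma^1_1\bou\Dcz$ cannot be a reflection-type sentence such as ``there is a countable coded $\omega$-model of $\Sigma^1_1\bou\Acz$'': that sentence already fails in $\HYP$ (a model closed under hyperarithmetic reducibility must code representatives of cofinally many hyperarithmetic degrees, so its code cannot itself be hyperarithmetic), yet $\HYP\models\Sigma^1_1\bou\Dcz$, so $\Sigma^1_1\bou\Dcz$ does not prove it. Hence the separation must come from a genuine instance of dependent choice rather than from reflection.

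The engine is the $\omega$-model incompleteness theorem (\cite{simpson_2009} VIII.5.6). I would use it to produce a countable coded $\omega$-model $M$ of $\Sigma^1_1\bou\Acz$ together with a $\Sigma^1_1$ formula $\varphi(X,Y)$ such that $M\models\forall X\exists Y\,\varphi(X,Y)$ but $M$ contains no sequence $\langle Y_n\rangle$ with $\forall n\,\varphi(Y^n,Y_n)$. The conceptual point is that $\Sigma^1_1\bou\Dcz$ can run an $\omega$-length dependent recursion in which the join of the sets produced so far is fed back at each stage, and this extra expressive power lets it prove a Gödel-style iteration statement $\chi$ about $\Sigma^1_1\bou\Acz$ that, by the diagonal content of VIII.5.6, $\Sigma^1_1\bou\Acz$ cannot prove about itself. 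The steps are then: (i) isolate $\chi$ as the conclusion of one such $\Sigma^1_1\bou\Dcz$ instance, provable in $\Sigma^1_1\bou\Dcz$ but not in $\Sigma^1_1\bou\Acz$; (ii) from the non-provability, invoke VIII.5.6 to obtain an actual countable coded $\omega$-model $M\models\Sigma^1_1\bou\Acz+\lnot\chi$; (iii) observe that $\lnot\chi$ directly contradicts the corresponding instance of $\Sigma^1_1\bou\Dcz$, so $M\not\models\Sigma^1_1\bou\Dcz$.

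The main obstacle is step (ii): passing from ``$\Sigma^1_1\bou\Acz$ does not prove $\chi$'' to the existence of a genuine $\omega$-model of $\Sigma^1_1\bou\Acz+\lnot\chi$. Ordinary completeness yields only a possibly $\omega$-inconsistent model, whereas here we need a coded $\omega$-model, a substantially stronger object; this is exactly the work done by $\omega$-model incompleteness, and is why the theorem must be phrased in terms of $\omega$-models rather than mere consistency. A secondary point requiring care is to confirm that the model produced validates the full scheme $\Sigma^1_1\bou\Acz$ (all instances), not merely the finite fragment entering the diagonalization; here the strong soundness and satisfaction machinery recalled in the preliminaries (\cite{simpson_2009} II.8.10), together with a $\Sigma^1_1$-boundedness argument, keep the $\Sigma^1_1\bou\Acz$ closure under control.
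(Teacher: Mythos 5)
Your two preliminary observations are correct and genuinely sharp --- in particular, that $\HYP \models \Sigma^1_1\bou\Dcz$ while $\HYP \models$ ``there is no coded $\omega$-model of $\Sigma^1_1\bou\Acz$'' (any such model must contain all of $\HYP$, so its code computes a cofinal family of hyperarithmetic sets and cannot itself be hyperarithmetic). But this observation undermines your own step (ii), and that is where the proposal breaks. The $\omega$-model incompleteness theorem does \emph{not} convert ``$\Sigma^1_1\bou\Acz \not\vdash \chi$'' into ``there is an $\omega$-model of $\Sigma^1_1\bou\Acz + \lnot\chi$'', and no theorem can: the $\omega$-model theory of $\Sigma^1_1\bou\Acz$ strictly exceeds its deductive closure. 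For instance $\chi = \Con(\Sigma^1_1\bou\Acz)$ is unprovable by G\"odel yet holds in \emph{every} $\omega$-model, since $\omega$-models have standard first-order part and satisfy all true arithmetic sentences. What \cite{simpson_2009} Theorem VIII.5.6 actually supplies is a model of $T + \lnot$``there exists a coded $\omega$-model of $T$'' for that one specific self-referential sentence --- and for $T \supseteq \Sigma^1_1\bou\Acz$ this is exactly the sentence your second observation has already disqualified as a witness, since $\HYP$ satisfies its negation together with full $\Sigma^1_1\bou\Dcz$. So the only sentence the cited engine hands you cannot refute $\Sigma^1_1\bou\Dcz$, and you never exhibit a replacement $\chi$. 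Because $\Sigma^1_1\bou\Dcz \equiv \Sigma^1_3\bou\Rfnz$ (\cite{simpson_2009} Theorem VIII.5.12), any workable $\chi$ must be a \emph{parameterized} instance of $\Sigma^1_3$ $\omega$-model reflection, with its antecedent true of parameters inside the constructed model but failing to reflect there; arranging this inside a model of $\Sigma^1_1\bou\Acz$ is precisely the nontrivial content of the theorem, and your plan leaves it entirely open. Step (iii) has the same problem: it presupposes $\lnot\chi$ contradicts a single concrete $\mathrm{DC}$ instance, but no instance is ever identified.

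For comparison: the paper does not prove this statement at all; it cites Friedman's dissertation \cite{Friedman1967SubsystemsOS} (II.4 Theorem 2), where the separating model is obtained by a direct construction (the pseudo-hierarchy method: an $\omega$-model of $\Sigma^1_1\bou\Acz$ arising from a jump hierarchy along an ill-founded order with no descending sequence visible in the model, so that a concrete dependent-choice instance along the order fails). The paper's footnote does gesture, as you do, at $\omega$-model incompleteness as an easier route, but even that route requires carefully choosing the theory (with set parameters) fed into Theorem VIII.5.6 so that the resulting non-reflection kills a specific $\Sigma^1_3\bou\Rfn$ instance rather than the unconditional reflection sentence. As it stands, your proposal correctly maps the obstacles but does not cross them: there is a genuine gap, not merely a missing routine verification.
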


\begin{theorem}[\cite{STEEL197855} Theorem 4]
There exists an $\omega$-model satisfies $\Delta^1_1\bou\Caz$ and does not satisfies $ \Sigma^1_1\bou\Acz$.
\end{theorem}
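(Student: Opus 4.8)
The plan is to build $M$ by the \emph{tagged tree forcing} of Steel \cite{STEEL197855}, in the streamlined form later used by Montalbán \cite{Montalbn2006IndecomposableLO}. Conditions are finite trees $p\subseteq\omega^{<\omega}$ each of whose nodes carries a \emph{tag}: a finite packet of side information recording a commitment to keep certain $\Sigma^1_1$ facts false along the branches passing through that node. A sufficiently generic filter determines a sequence of reals $\langle G_n\rangle_{n\in\omega}$, and I would let $M$ be the $\omega$-model whose second-order part is the closure of $\{G_n : n\in\omega\}$ under join and hyperarithmetic reducibility. Closure under hyperarithmetic reduction immediately gives $M\models\Acaz$ and is the half of the argument that pushes toward comprehension; the tags are what will defeat choice.

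First I would establish the forcing-the-truth machinery: for $\Sigma^1_1$ and for $\Pi^1_1$ formulas (with parameters naming finitely many of the $G_n$) there are forcing relations that are suitably definable and satisfy the truth lemma, namely $M\models\chi$ iff some condition in the generic filter forces $\chi$. The design of the tags is exactly to make the $\Sigma^1_1$ forcing relation \emph{controllable}: given a prospective $\Sigma^1_1$ witness, one can pass to an extension whose tag permanently bars that witness, and this ``barring'' is a dense requirement that can be met coordinate by coordinate.

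Next, to get $\Delta^1_1\bou\Caz$ in $M$, suppose $M\models\forall n(\varphi(n)\leftrightarrow\psi(n))$ with $\varphi\in\Sigma^1_1$, $\psi\in\Pi^1_1$ and parameters below some finite join $G_0\oplus\cdots\oplus G_k$. Using the truth lemma in both directions, membership of $n$ in $A=\{n:M\models\varphi(n)\}$ is decided by a bounded portion of the generic, so by genericity $A$ is hyperarithmetic in finitely many $G_i$ and hence $A\in M$. The point is that the tags only ever obstruct $\Sigma^1_1$ witnesses that are \emph{not} matched by a $\Pi^1_1$ counterpart, so a genuinely $\Delta^1_1$-over-$M$ relation is never killed.

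Finally, for the failure of $\Sigma^1_1\bou\Acz$ I would isolate a single instance: a $\Sigma^1_1$ formula $\theta(n,Y)$ for which each $G_n$ supplies a witness, so that $M\models\forall n\exists Y\,\theta(n,Y)$, while no $Z\in M$ satisfies $\forall n\,\theta(n,Z_n)$. Any such $Z$ is hyperarithmetic in some finite join $G_0\oplus\cdots\oplus G_k$; a density argument then uses the tags to bar, in the generic, the $\theta$-witness at a coordinate $n>k$ that $Z$ cannot recover from $G_0\oplus\cdots\oplus G_k$, contradicting $\forall n\,\theta(n,Z_n)$. The \textbf{main obstacle} is the simultaneous balancing of the previous two steps: the tags must be strong enough that no element of $M$ assembles witnesses for cofinitely many coordinates (so that $\Sigma^1_1\bou\Acz$ fails) yet weak enough never to suppress the $\Pi^1_1$-correct instances that $\Delta^1_1\bou\Caz$ needs. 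Making the $\Sigma^1_1$ forcing-the-truth lemma precise — so that ``barring a witness'' is a dense, iterable operation that leaves $\Delta^1_1$-over-$M$ relations intact — is the technical heart, and is exactly the content of Steel's analysis of the tagged-tree forcing relation.
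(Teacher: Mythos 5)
The paper offers no proof of this statement at all: it is quoted directly from Steel (\cite{STEEL197855}, Theorem~4), so your plan of reconstructing the tagged-tree forcing is indeed the intended route. As a proof, however, your text is a roadmap whose two load-bearing lemmas are exactly the ones left unproved, and your description of the mechanism is inaccurate in a way that matters. In Steel's forcing the tags are not ``packets of commitments to keep certain $\Sigma^1_1$ facts false'': a condition is a finite tree together with an assignment to its nodes of recursive ordinal notations (or a symbol $\infty$), with tags decreasing along the tree order; the tags control the ranks of the well-founded parts of the generic tree, and all $\Sigma^1_1/\Pi^1_1$ control is \emph{derived} from this. The distinction is not cosmetic. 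A dense operation that ``permanently bars a prospective $\Sigma^1_1$ witness,'' as you propose, is in general in direct conflict with your $\Delta^1_1\bou\Caz$ verification --- the very tension you flag as the main obstacle --- and Steel resolves it not by balancing two pulls on the tags but by the \emph{retagging lemma}: whether a condition forces a given $\Sigma^1_1$ or $\Pi^1_1$ statement depends only on the shape of the finite tree and on order relations among the tags, not on the tags themselves. That invariance is what makes the forcing relation suitably (hyperarithmetically) definable, and it is what your step ``membership of $n$ in $A$ is decided by a bounded portion of the generic, so $A$ is hyperarithmetic in finitely many $G_i$'' actually requires; genericity alone yields no such bound, and without retagging that sentence is an assertion, not an argument.

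The refutation of $\Sigma^1_1\bou\Acz$ has a parallel gap: you never exhibit the instance $\theta(n,Y)$. In Steel's model the witnesses are essentially the generic branches themselves (e.g.\ $\theta(n,Y)$ says $Y$ is a path through a designated $n$-th subtree of the generic tree), and one argues that any $Z\in M$ is hyperarithmetic in the generic tree joined with finitely many branches, while a branch at a coordinate outside that finite set is mutually generic over those parameters, hence not hyperarithmetic in them. Your phrasing ``a density argument then uses the tags to bar, in the generic, the $\theta$-witness at a coordinate $n>k$'' is incoherent as stated: $Z$ and the generic are fixed together, so one cannot retroactively modify the generic after fixing $Z$; the correct form is a density argument over conditions showing that no condition forces such a $Z$ to enumerate witnesses at all coordinates. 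Note also that your second-order part --- the closure of $\{G_n : n\in\omega\}$ under join and hyperarithmetic reducibility --- must include the generic tree itself as a parameter, or the offending instance is not even expressible in $M$. So: same approach as the source the paper cites, but the retagging/definability lemma and the mutual-genericity argument for a concrete $\theta$ \emph{are} the proof, and both are missing.
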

As seen below, $\Delta^1_1\bou\Caz$ and $ \Sigma^1_1\bou\Acz$ are separated more finely.
The theory obtained by adding $\Acaz$ to the following diagram is called $\Pi^1_1\bou\mathsf{SEP}_0$.
	\[\lnot \exists n[\psi_0(n) \land \psi_1(n)]\rightarrow \exists X \forall n(  (\psi_0(n) \rightarrow n\in X)\land (\psi_1(n) \rightarrow n\not\in X) ) ~~ \mathrm{where}~~\psi_0,\psi_1\in\Pi^1_1.\]
It can be easily verified that $\Sigma^1_1\bou\Acz \vdash\Pi^1_1\bou\mathsf{SEP}_0\vdash \Delta^1_1\bou\Caz$.
The converse does not hold.
\begin{theorem}[\cite{Montalban2008-MONOTS} Theorem 3.1,Theorem 2.1]
	The following both hold.
	\begin{enumerate}
		\item There exists an $\omega$-model satisfies $\Delta^1_1\bou\Caz$ and does not satisfies $\Pi^1_1\bou\mathsf{SEP}_0$.
		\item There exists an $\omega$-model satisfies $\Pi^1_1\bou\mathsf{SEP}_0$ and does not satisfies $ \Sigma^1_1\bou\Acz$.
	\end{enumerate}
	The $\omega$-model of 1 will be denoted as $M_m$ hereafter.
\end{theorem}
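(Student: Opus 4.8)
The plan is to obtain both separations by constructing the two $\omega$-models directly, using the tagged tree forcing of Steel \cite{STEEL197855} in the streamlined form developed by Montalbán \cite{Montalbn2006IndecomposableLO}. In each case the forcing conditions are finite well-founded trees whose nodes carry ``tags'' that prescribe which reals the generic branch is allowed to enumerate, and the generic object is a sequence $\langle G_n\rangle_{n\in\N}$ of reals. I would let $M$ be the $\omega$-model whose second-order part consists of all sets computable from finitely many of the $G_n$ together with the iterated jumps needed to close off; closure under Turing reducibility and join then gives $M\models\Acaz$ for free. Each argument splits into a \emph{positive} side (the generic model satisfies the weaker theory) and a \emph{negative} side (a carefully designed instance of the stronger theory has no solution in $M$), and the two parts are, pleasingly, dual to one another.

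For part 1 the positive side is the mild one: to get $M\models\Delta^1_1\bou\Caz$ it suffices, by the Suslin--Kleene theorem, to show that $M$ is closed under hyperarithmetic reducibility, since any set that is $\Delta^1_1$ in a parameter $P\in M$ is then already $\le_h P$ and hence in $M$. This closure is verified by the same meta-level transfinite induction along $<^P_\ko$ used in the proof that $\JI_0$ is a theory of hyperarithmetic analysis, the genericity being used only to keep the relevant $\Sigma^1_1$ and $\Pi^1_1$ definitions suitably absolute between $M$ and the background universe. The negative side is where the design lives: I would fix two $\Pi^1_1$ predicates $\psi_0,\psi_1$ (with a parameter in $M$) having provably empty intersection, and tune the tags so that the pair $A_0=\{n:\psi_0(n)\}$, $A_1=\{n:\psi_1(n)\}$ is \emph{inseparable} in $M$. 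A density argument should show that every condition extends to one forcing that each candidate real in $M$ either omits some element of $A_0$ or admits some element of $A_1$, so that no separator lies in $M$ and $\Pi^1_1\bou\mathsf{SEP}_0$ fails; the point is to arrange this inseparability robustly enough to survive the hyperarithmetic closure imposed for the positive side.

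For part 2 the roles reverse. The negative side is the routine one: I would exhibit a $\Sigma^1_1$ formula $\varphi(n,Y)$ whose fibres $\exists Y\,\varphi(n,Y)$ are each witnessed in $M$, while the tags are set so that no single $Z\in M$ yields witnesses $Z_n$ for all $n$ simultaneously; a genericity argument shows any candidate $Z$ fails on cofinally many $n$, so $\Sigma^1_1\bou\Acz$ fails. The positive side now demands the genuinely hard verification that $M\models\Pi^1_1\bou\mathsf{SEP}_0$: given disjoint $\Pi^1_1$ sets $A_0,A_1$ defined with parameters in $M$, I would show that the forcing generically produces a separating set — assembled by a non-uniform, $\Pi^1_1$-driven recovery that the well-founded tree structure can track — while never being forced to produce a uniform choice family, exploiting that a single separator is a strictly weaker object than a simultaneous choice function.

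The main obstacle is exactly this last point, the positive side of part 2. It is the crux that distinguishes separation from choice, and it is where Steel's apparatus is indispensable: one must show the forcing \emph{decides} each $\Pi^1_1$ membership question along the generic uniformly enough to yield a separator inside $M$, yet is homogeneous enough that no condition forces a simultaneous choice. Reconciling these two demands — downward control of $\Pi^1_1$ predicates to $M$ on the one hand, and the deliberate failure of uniform choice on the other — is the technical heart of the construction, and it is precisely the careful tag bookkeeping together with the well-foundedness of the forcing trees in Montalbán's presentation that makes the two compatible.
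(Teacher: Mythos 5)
You have correctly identified the method: the paper itself supplies no proof for this theorem — it is imported wholesale from Montalb\'an \cite{Montalban2008-MONOTS} — and, as the paper's footnote notes, those separations are indeed obtained by Steel's tagged tree forcing \cite{STEEL197855} in Montalb\'an's streamlined form \cite{Montalbn2006IndecomposableLO}. But your verification of the positive side of part 1 contains a genuine error, not just an omission: closure of $M$ under hyperarithmetic reducibility (and join) does \emph{not} imply $M \models \Delta^1_1\bou\Caz$. The Suslin--Kleene step fails because $\Sigma^1_1$ and $\Pi^1_1$ formulas are not absolute between $M$ and the background universe: $M$ can believe $\forall n(\varphi(n)\leftrightarrow\psi(n))$ merely because $M$ lacks the witnesses that would refute it, so a set can be $\Delta^1_1$ \emph{over $M$} without being genuinely $\Delta^1_1$ — hence without being hyperarithmetic — in the parameter, and hyperarithmetic closure then buys you nothing. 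A concrete counterexample sits in this very paper: Van Wesep's model $M_w$ of Theorem \ref{theorem:M_w} satisfies $\unique\Pi^1_0\bou\Acz$, a theory of hyperarithmetic analysis, so it is closed under hyperarithmetic reduction, yet $M_w \not\models \Delta^1_1\bou\Caz$. Establishing $\Delta^1_1\bou\Caz$ in a Steel-generic model is itself the substantial forcing argument — the retagging and homogeneity lemmas showing that any set $\Delta^1_1$ over $M$ is already computable from finitely many generic columns — and cannot be dismissed as the ``mild'' side.

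The same pattern undermines part 2: the step you rightly call the crux, that the generic model satisfies $\Pi^1_1\bou\mathsf{SEP}_0$ while no uniform $\Sigma^1_1$ choice sequence enters $M$, is exactly the content of Montalb\'an's Theorem 2.1, and your proposal leaves it as a stated aspiration (``I would show that the forcing generically produces a separating set'') without the mechanism: how conditions decide $\Pi^1_1$ membership along the generic, and why a single separator — unlike a choice family — can be assembled inside $M$ from that decision procedure. Since both positive sides are precisely where all the work lies, what you have is a plausible research plan consistent with the literature the paper cites, not a proof; the one definite claim you do make in its place (Suslin--Kleene from hyperarithmetic closure) is false.
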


\begin{theorem}[\cite{Montalbn2006IndecomposableLO} Theorem 5.1]
	There exists an $\omega$-model satisfies $\JI_0$ and does not satisfies $\unique\Pi^1_0\bou\Acz$\footnote{In the literature, an $\omega$-model that fails to satisfy a theory $\mathsf{CDG}\bou\mathsf{CA}$, which is slightly stronger than $\JI$, is constructed.}.
\end{theorem}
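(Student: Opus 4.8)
The plan is to build the required $\omega$-model by Steel's tagged tree forcing, in the refined form developed by Montalbán \cite{STEEL197855,Montalbn2006IndecomposableLO}. The forcing produces a sequence $\langle A_n\rangle_{n\in\omega}$ of reals, and I take $M$ to be the closure of $\{A_n : n\in\omega\}$ under computable join and the $H$-set operation; concretely, $Z\in M$ iff $Z$ is hyperarithmetic in some finite join $A_0\oplus\cdots\oplus A_k$. Such an $M$ is automatically an $\omega$-model of $\Acaz$, and since $\rH^X_a$ is hyperarithmetic in $X$ whenever $a\in\ko^X$, the collection $M$ is closed under jump iteration; as in the proof that $\JI_0$-models are exactly those closed under $H$-sets, this gives $M\models\JI_0$. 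So the entire difficulty is concentrated in arranging the $A_n$ so that $\unique\Pi^1_0\bou\Acz$ fails.

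To defeat $\unique\Pi^1_0\bou\Acz$ I need an arithmetic (indeed $\Pi^1_0$) formula $\varphi(n,Y)$, with a parameter from $M$, such that (i) in $M$ each instance $\varphi(n,\cdot)$ has the unique solution $Y=A_n$, while (ii) the collecting set $\langle A_n\rangle_{n}$ is not a member of $M$. For (ii) it suffices that the $A_n$ be mutually \emph{independent}, in the sense that $A_{k+1}$ is never hyperarithmetic in $A_0\oplus\cdots\oplus A_k$: then $\langle A_n\rangle_{n}$ computes every $A_n$ and so lies strictly above each finite piece $A_0\oplus\cdots\oplus A_k$, placing it outside $M$. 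Genericity of the construction delivers this independence through a standard family of dense sets forcing $A_{k+1}\ne\Gamma^{A_0\oplus\cdots\oplus A_k}$ for each hyperarithmetic functional $\Gamma$.

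The subtle requirement is (i): a Cohen-type generic real is the unique solution of no arithmetic predicate, so mutual genericity alone is not enough. This is exactly what the \emph{tagged} trees buy. One forces each $A_n$ to be the unique path through an arithmetically defined tree $T_n$, named by a parameter $p\in M$ coding the construction, the tags being used to prune away all competing paths while leaving enough freedom to preserve the independence in (ii). Taking $\varphi(n,Y)$ to assert ``$Y$ is a path through $T_n$'' (a $\Pi^1_0$ condition in $Y$ and $p$) then yields a family with unique witnesses $A_n\in M$ whose collection escapes $M$, so $M\not\models\unique\Pi^1_0\bou\Acz$.

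I expect the main obstacle to be the forcing-theoretic heart of Steel's method: producing a tree whose unique path is generic over the tree itself, so that each $A_n$ is simultaneously uniquely arithmetically definable (requirement (i)) and not hyperarithmetically computable from the defining data together with the earlier $A_j$ (requirement (ii)). These two demands pull in opposite directions, and reconciling them is precisely what the tagged structure is engineered to do. As a shortcut, one may instead invoke the construction of \cite{Montalbn2006IndecomposableLO} directly, which produces an $\omega$-model of $\JI_0$ failing $\mathsf{CDG}\bou\mathsf{CA}$; since $\mathsf{CDG}\bou\mathsf{CA}$ lies strictly between $\JI$ and $\unique\Pi^1_0\bou\Acz$ and hence is provable from $\unique\Pi^1_0\bou\Acz$, that same model witnesses the present separation.
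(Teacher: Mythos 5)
Note first that the paper offers no proof of this statement at all: it is imported verbatim from Montalb\'an, and the footnote records precisely the reduction you give in your closing paragraph --- the literature constructs a model of $\JI_0$ failing $\mathsf{CDG}\bou\mathsf{CA}$, and since $\unique\Pi^1_0\bou\Acz$ proves $\mathsf{CDG}\bou\mathsf{CA}$, that model also fails $\unique\Pi^1_0\bou\Acz$. So your ``shortcut'' is correct and coincides with the paper's entire justification; if you had stopped there, the answer would simply match.

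Your attempted reconstruction of the forcing argument, however, contains a genuine conceptual error, concentrated exactly at your requirement (i). If each $T_n$ is uniformly arithmetic in a single parameter $p\in M$ and the tags really ``prune away all competing paths'', so that $A_n$ is the unique path of $T_n$ \emph{in the real world}, then $A_n$ is $\Delta^1_1(p)$: membership $x\in A_n$ is both $\Sigma^1_1$ ($\exists Y$ a path with $x\in Y$) and $\Pi^1_1$ ($\forall Y$ a path, $x\in Y$), uniformly in $n$. Hence the whole sequence $\langle A_n\rangle_n$ is $\Delta^1_1(p)$, i.e.\ hyperarithmetic in $p$ --- and by the paper's own proposition every $\omega$-model of $\JI$ is closed under hyperarithmetic reduction, so $\langle A_n\rangle_n\in M$ and the instance of $\unique\Pi^1_0\bou\Acz$ is satisfied after all, destroying your (ii). The engine of Steel's method is exactly the opposite of pruning: uniqueness must hold only from $M$'s viewpoint while failing in $V$ --- typically the $T_n$ are trees of jump hierarchies along orderings that $M$ wrongly believes lie in $\ko^X$, which in $V$ have many (or no) paths, none of them hyperarithmetic in the data, with genericity and the tags arranged so that precisely one path lands in $M$. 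For the same reason, your claim that $M$ ``automatically'' satisfies $\JI_0$ because it is closed under $H$-sets is unjustified: since ``$a\in\ko^X$'' is $\Pi^1_1$, the model may hold pseudo-notations it believes to be well-founded, and $\JI$ at such an $a$ is a nontrivial commitment about false hierarchies; verifying it (via rank bounds coming from the tags) is a substantial part of Montalb\'an's proof, not a formality. (Closure under $H$-sets only handles the case where $a$ is genuinely in $\ko^X$, in which case $\mathrm{H}^X_a\in\HYP(X)\subset M$ regardless of the hypothesis of the $\JI$ instance.)
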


In the early literature of reverse mathematics \cite{Friedman1975}, two mathematical theories of hyperarithmetical analysis are mentioned.These are restricted by limitations on arithmetic formulas, so they are not recognized as purely mathematical theorems today.
\begin{definition}[$\mathsf{ABW},\mathsf{SL}$]
When referring to convergence or accumulation points below, we consider the Cantor space as the topology.

The following statement is called arithmetic Bolzano-Weierstrass, denoted as $\mathsf{ABW}$.
\begin{align*}
    A(X) &\text{ has finitely many solutions} \lor \\
    &\text{there exist accumulation points for } \{ X \mid A(X) \}
    \quad \text{where } A(X) \in \Pi^1_0.
\end{align*}
Here, ``$A(X)$ has finitely many solutions'' is an abbreviation for $\exists Z \exists m \forall Y( A(Y) \leftrightarrow \exists n<m  Y=A_n )$.
Furthermore, $Z \subset \N$ is an accumulation point of $\set{X|A(X)}$ if the following holds:
\[\forall n \exists X[A(X) \land X\neq Z \land X[n]= Z[n] ].\]
The following statement is called a sequential limit system, denoted as $\mathsf{SL}$.
\begin{align*}
	Z &\mathrm{~is~an~accumulation~point~of~} \mathcal{A}=\set{X|A(X)} \rightarrow\\
	&\mathrm{there~exists~a~sequence~} \{Z_n\}_{n\in\N}\mathrm{~of~points~of~}\mathcal{A}\mathrm{~that~converges~to}Z, ~~~\mathrm{where}~A(X) \in\Pi^1_0.
\end{align*}

Here, $\{Z_n\}_{n\in\N}$ is called a sequence of points in $A$ if $\forall n A(Z_n)$ holds, and it is said to converge to $Z$ if the following condition is met:
\[\forall m \exists N \forall n>N (Z_n[m] = Z[m]).\]
Each of these, when combined with $\Rcaz$, is denoted by appending a '0' at the bottom.
\end{definition}
\begin{remark}
In the 1970s, when Friedman introduced the two theories mentioned above, he considered including full induction axioms in them.Therefore, in his papers, it is stated without proof that $\mathsf{SL}$ is equivalent to what we now call $\Sigma^1_1\bou\mathsf{AC}$ over $\Rcaz + $ full induction (\cite{Friedman1975} Theorem 2.1\footnote{In Friedman's paper, it is written as HAC instead of $\Sigma^1_1\bou\mathsf{AC}$, which in modern notation would be $\Delta^1_1\bou\mathsf{AC}$.These are equivalent over $\Rcaz$.}), but it is important to note that considerations of limitations on induction are not addressed there.
Chris J.Conidis provided the proofs for the two equivalences that were omitted (\cite{Conidis_2012} Theorem 2.1), where it is stated in the paper that the equivalences can be proven over $\Rcaz$ without using $\Sigma^1_1\bou\Ind$.However, there are explicit uses of $\Sigma^1_1\bou\Ind$ in the proof (p4477, line 14), so at least to the author, it remains unclear whether induction is actually necessary or not.
\end{remark}
Friedman also discussed the results related to $\mathsf{ABW}$ and $\mathsf{SL}$ but did not provide proofs.Conidis gave proofs for each of these and conducted a detailed analysis, especially of $\mathsf{ABW}$.

\begin{theorem}[\cite{Conidis_2012} Theorem 2.1 ]
	The following are true over $\Rcaz+\Sigma^1_1\bou\Ind$:
\begin{itemize}
    \item $\Sigma^1_1\bou\Acz\rightarrow \mathsf{SL}$.
    \item $\Sigma^1_1\bou\Acz\rightarrow \mathsf{ABW}$.
    \item $\mathsf{SL}\rightarrow \Sigma^1_1\bou\Acz$.
    \item $\mathsf{ABW}\rightarrow \unique\Pi^1_0\bou\Acz$.\footnote{Friedman did not mention this fourth item.}
\end{itemize}

\end{theorem}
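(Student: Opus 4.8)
The plan is to prove the four implications separately, treating the two implications \emph{out of} $\Sigma^1_1\bou\Acz$ first, since they are direct, and then the two \emph{into} the choice axioms, which proceed by encoding a choice instance as an accumulation-point problem. \textbf{For $\Sigma^1_1\bou\Acz\to\mathsf{SL}$:} let $Z$ be an accumulation point of $\mathcal A=\set{X | A(X)}$ with $A\in\Pi^1_0$, i.e.\ $\forall n\exists X[A(X)\land X\neq Z\land X[n]=Z[n]]$. The matrix is arithmetic, hence $\Sigma^1_1$, so $\Sigma^1_1\bou\Acz$ yields $\langle X_n\rangle$ with $A(X_n)\land X_n\neq Z\land X_n[n]=Z[n]$ for every $n$. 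Since $X_n[n]=Z[n]$ forces $X_n[m]=Z[m]$ for all $m\le n$, taking $N=m$ witnesses $\forall m\exists N\forall n>N\,(X_n[m]=Z[m])$, so $\langle X_n\rangle$ converges to $Z$; no induction is needed here.

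\textbf{For $\Sigma^1_1\bou\Acz\to\mathsf{ABW}$:} assuming $\mathcal A$ does not have finitely many solutions, I produce an accumulation point. First I extract an injective sequence $\langle W_j\rangle$ of solutions. Using $\Sigma^1_1\bou\Ind$ one passes from ``not finitely many'' to ``for every $k$ there are $k{+}1$ pairwise distinct solutions'' (by taking, via a bounded $\Sigma^1_1$ search, a maximal tuple of distinct solutions of bounded size and arguing it would list them all); then $\Sigma^1_1\bou\Acz$ applied to the arithmetic predicate ``$Y$ codes $k{+}1$ distinct solutions'' gives, for each $k$, such a tuple, whose members' union is infinite, and an arithmetic first-occurrence selection lists the $W_j$. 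The point of this step is that, once the solutions form a single sequence, the predicate $\mathrm{Inf}^\ast(\sigma):\equiv\forall m\exists j>m\,(W_j[|\sigma|]=\sigma)$ is \emph{arithmetic}; hence by $\Acaz$ the tree $\set{\sigma | \mathrm{Inf}^\ast(\sigma)}$ exists, it is an infinite binary tree by a level-wise pigeonhole, and $\Wklz$ (available from $\Acaz$) gives a path $Z$. For every $n$, $\mathrm{Inf}^\ast(Z[n])$ supplies a solution $W_j\neq Z$ with $W_j[n]=Z[n]$, so $Z$ is an accumulation point.

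\textbf{The reverse implications go by encoding.} For $\mathsf{SL}\to\Sigma^1_1\bou\Acz$, given $\forall n\exists Y\,\varphi(n,Y)$ with $\varphi\in\Sigma^1_1$, I would put $\varphi$ in tree normal form and build an \emph{arithmetic} $A$ whose solutions code a point carrying, on the coordinates where it agrees with a fixed sequence $Q$ up to length $m$, genuine infinite-path witnesses for $\varphi(0,\cdot),\dots,\varphi(h(m),\cdot)$ with $h(m)\to\infty$. For each $m$ such a solution exists by a \emph{finite} choice, so $Q$ is an accumulation point; $\mathsf{SL}$ then yields $\langle P_k\rangle\to Q$, and since each $P_k$ far enough along bundles an $n$-witness, one reads off $Y_n$ for all $n$ and assembles the choice set by $\Acaz$. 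For $\mathsf{ABW}\to\unique\Pi^1_0\bou\Acz$, given $\forall n\exists! Y\,\varphi(n,Y)$ with $\varphi\in\Pi^1_0$, I design $A$ whose solutions are finite approximations $X^{(m)}$ to the graph $\langle Y_n\rangle$ of the unique witnesses (block $n$ pins $Y_n$ for $n\le m$, filler beyond); uniqueness keeps $A$ arithmetic and the approximations canonical. These are infinitely many distinct solutions, so the $\mathsf{ABW}$ dichotomy falls on the accumulation-point side, and uniqueness forces every accumulation point to equal the full graph $\langle Y_n\rangle$, which is the required choice set. This is precisely why $\mathsf{ABW}$ yields only the \emph{unique} version rather than full $\Sigma^1_1\bou\Acz$.

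\textbf{Main obstacle.} I expect the two encodings to be the crux: the delicate points are arranging the padding so that the encoded set has exactly the intended accumulation point(s) and no spurious ones, and proving \emph{totality} of the extracted choice set, i.e.\ that witnesses are recovered for \emph{all} $n$ rather than merely infinitely often. This totality, together with the passage from ``not finitely many'' to ``arbitrarily many distinct solutions'' and the infinite pigeonhole in the $\mathsf{ABW}$ argument, is where $\Sigma^1_1\bou\Ind$ enters, the relevant ``for all $n\dots$'' assertions being of $\Sigma^1_1$ complexity and unavailable over $\Rcaz$ alone; by contrast the two forward directions require no induction beyond what $\Acaz$ supplies.
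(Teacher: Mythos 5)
You should first note a mismatch of expectations: the paper does not prove this theorem at all — it is imported verbatim from Conidis (\cite{Conidis_2012}, Theorem 2.1), and the remark preceding it even cautions that Conidis's published proof uses $\Sigma^1_1\bou\Ind$ despite the claim that it works over $\Rcaz$ alone, which is exactly why the paper states the theorem over $\Rcaz+\Sigma^1_1\bou\Ind$. So there is no in-paper proof to compare against; your sketch must be judged against the cited argument, and in strategy it matches it: the two implications out of $\Sigma^1_1\bou\Acz$ are direct, and the two reverse implications encode choice instances as accumulation-point problems, with $\Sigma^1_1\bou\Ind$ entering precisely where you locate it — bounded $\Sigma^1_1$ choice for the finite witness bundles (the paper's Lemma \ref{lemma:Σ1k上Σ1kは有界任意量化に閉じる} with $k=0$), and the passage from ``not finitely many'' to ``arbitrarily many pairwise distinct solutions,'' which is a least-number-principle argument for a $\Pi^1_1$ formula, i.e.\ exactly what $\Sigma^1_1\bou\Ind$ supplies. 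Your first two bullets are essentially complete and correct: in the $\mathsf{SL}$ direction no induction is needed, as you say, and in the $\mathsf{ABW}$ direction the tree of strings extended by infinitely many $W_j$, the internal pigeonhole at each level, and weak König's lemma are all legitimately available because $\Sigma^1_1\bou\Acz\vdash\Acaz$.

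Two points need repair. First, in $\mathsf{SL}\rightarrow\Sigma^1_1\bou\Acz$ you ``assemble the choice set by $\Acaz$,'' but $\Acaz$ is not in the base theory here, and assuming it is circular: it follows from the conclusion $\Sigma^1_1\bou\Acz$, not from the hypothesis $\mathsf{SL}$. Fortunately the appeal is unnecessary: with a concrete coding — say a level-$k$ solution is $0^k{}^\frown 1{}^\frown W$ where $W$ bundles genuine witnesses $(Y_i,f_i)$ for $\varphi(0,\cdot),\dots,\varphi(k,\cdot)$, and $Q=0^\infty$ — convergence of the $\mathsf{SL}$-output $\langle P_j\rangle$ to $Q$ forces the levels to tend to infinity, and for each $n$ an unbounded but provably terminating search through $\langle P_j\rangle$ locates a bundle covering $n$, so the choice set already exists by $\Delta^0_1$ comprehension relative to $\langle P_j\rangle\oplus Q$. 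Second, bullets 3 and 4 are plans rather than proofs: the encodings are named but never constructed, and you yourself flag the padding and the exclusion of spurious accumulation points as the crux. These are indeed the delicate steps — in bullet 4 one must verify that no solution is itself an accumulation point (a marker column that moves with the level $k$ arranges this), that all solutions of level $\ge i$ share column $i$ (this is the one place uniqueness is used, and it is what makes the accumulation point canonical), and that the finitely many low-level solutions can be discarded, via their finite disagreement points with $W$, when identifying each column of $W$. All of this is completable along exactly the lines you indicate, so the proposal is a correct reconstruction of the cited proof in outline, but as written it defers the constructions that carry the actual content of the two reverse implications.
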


Forty years after the introduction of $\mathsf{ABW}$ and $\mathsf{SL}$, Montalb{\'{a}}n finally discovered a theorem in pure mathematics concerning ordered sets, known as $\mathsf{INDEC}$, which belongs to hyperarithmetic analysis.
 The theorem that led to the development of $\mathsf{INDEC}$ is said to originate from Pierre Jullien's 1969 doctoral paper.
	

\begin{definition}[\cite{Montalbn2006IndecomposableLO} Statement 1.3]
	We denote the following statement as $\mathsf{INDEC}$:

	Any linear order that is dispersed and indecomposable is either indecomposable to the right or indecomposable to the left.

	Let us explain the terms used.Assume a linear order $\mathcal{A}=\braket{A,\le}$ is given.A cut in $\mathcal{A}$ is a pair of subsets $\braket{L,R}$ of $A$ such that $L= A\backslash R$, where $L$ is an initial segment of $\mathcal{A}$.
	\begin{itemize}
			\item $\mathcal{A}$ is said to be indecomposable if, for any cut $\braket{L,R}$, $\mathcal{A}$ can be embedded into either $L$ or $R$ (the orders on $L$ and $R$ are induced by the order of $\mathcal{A}$).
			\item $\mathcal{A}$ is indecomposable to the right if, for any cut $\braket{L,R}$ where $R\neq \varnothing$, $\mathcal{A}$ can be embedded into $R$.
			\item Indecomposability to the left is defined similarly to the right.
			\item $\mathcal{A}$ is dispersed if it cannot embed $\Q$, the usual order of rational numbers.
	\end{itemize}
\end{definition}

\begin{theorem}[\cite{Montalbn2006IndecomposableLO} Theorem 2.2]
	$\Delta^1_1\bou\Caz \vdash \mathsf{INDEC}$.
\end{theorem}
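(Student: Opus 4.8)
The plan is to argue by contraposition inside $\Delta^1_1\bou\Caz$: assuming $\mathcal{A}$ is dispersed and indecomposable but neither indecomposable to the right nor to the left, I would derive $\Q\hookrightarrow\mathcal{A}$, contradicting dispersedness. The failure of right-indecomposability hands me a cut $\langle L_1,R_1\rangle$ with $R_1\neq\varnothing$ and $\mathcal{A}\not\hookrightarrow R_1$; indecomposability then forces $\mathcal{A}\hookrightarrow L_1$. Symmetrically, the failure of left-indecomposability gives a cut $\langle L_2,R_2\rangle$ with $L_2\neq\varnothing$, $\mathcal{A}\not\hookrightarrow L_2$, and $\mathcal{A}\hookrightarrow R_2$. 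Since enlarging a final segment only makes embedding easier (if $\mathcal{A}\hookrightarrow R$ and $R\subseteq R'$ then $\mathcal{A}\hookrightarrow R'$), the cut $\langle L_2,R_2\rangle$ must lie strictly to the left of $\langle L_1,R_1\rangle$: otherwise $R_1\supseteq R_2$ would yield $\mathcal{A}\hookrightarrow R_1$. Writing $B=L_2$, $C=R_1$ and $M=L_1\setminus L_2$, I obtain $\mathcal{A}=B+M+C$ with $B,C\neq\varnothing$, with $\mathcal{A}\hookrightarrow B+M$ and $\mathcal{A}\hookrightarrow M+C$, but $\mathcal{A}\not\hookrightarrow B$ and $\mathcal{A}\not\hookrightarrow C$. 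All of this is arithmetic in the given data and uses nothing beyond $\Acaz$.

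Next I would extract a self-embedding into the middle block. Fixing an embedding $\phi\colon\mathcal{A}\hookrightarrow B+M$, the preimages $\phi^{-1}(B)$ and $\phi^{-1}(M)$ form a cut of $\mathcal{A}$; indecomposability says $\mathcal{A}$ embeds into one side, and the side landing in $B$ is excluded because $\mathcal{A}\not\hookrightarrow B$, so $\mathcal{A}\hookrightarrow M$. Thus $\mathcal{A}$ embeds into a proper interval $M$ flanked by the nonempty $B$ below and $C$ above. The combinatorial heart, which is essentially Jullien's theorem, is to bootstrap this single self-embedding into an embedding of $\Q$: inside any interval $J$ with $\mathcal{A}\hookrightarrow J$ one should locate, by applying indecomposability at interior cuts of the copy of $\mathcal{A}$ together with $\mathcal{A}\not\hookrightarrow B$ and $\mathcal{A}\not\hookrightarrow C$, two disjoint subintervals $J_0<J_1$ of $J$ separated by a point, each again embedding $\mathcal{A}$. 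Recursing over $2^{<\omega}$ then produces a dense family of nonempty intervals, and selecting a point in each separating gap yields a densely ordered subset, i.e.\ a copy of $\Q$. I expect this branching step, namely guaranteeing that the two subintervals can always be taken disjoint, separated, and still absorbing a copy of $\mathcal{A}$, to be the main obstacle, since it is the genuine content of the classical theorem and is exactly where my earlier reduction (which only delivers nested \emph{middle} blocks, a scattered configuration) must be refined.

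The reverse-mathematical content, and the reason $\Delta^1_1\bou\Caz$ is the natural setting, lies in turning these choices into actual sets. The predicate ``$\mathcal{A}$ embeds into the interval $Y$'' is $\Sigma^1_1$, so the recursion above requests, at each of infinitely many nodes, a $\Sigma^1_1$ witness; to assemble these into one $2^{<\omega}$-indexed object I would make the choices canonical by deciding embeddability outright. Here dispersedness is used a second time: I would aim to show that for the dispersed order $\mathcal{A}$ the predicate ``$\mathcal{A}\hookrightarrow Y$'' is, on the intervals in play, equivalent to a $\Pi^1_1$ condition (non-embeddability of a dispersed order being witnessed by a rank-type obstruction that is itself $\Sigma^1_1$), hence $\Delta^1_1$. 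Then $\Delta^1_1\bou\Caz$ supplies the set of right-embedding cuts used in the first paragraph and, more importantly, lets me define the intervals $J_\sigma$ and the separating points canonically (say by least witnesses) and collect them into a single set, completing the construction. The delicate point to watch is that every instance of comprehension invoked along the recursion genuinely lands in $\Delta^1_1$ under the standing hypotheses, so that the whole argument stays within the base theory rather than drifting up to $\Pi^1_1\bou\Caz$ or $\Atrz$.
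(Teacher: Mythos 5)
Your first two reductions are correct and indeed cheap: extracting the configuration $\mathcal{A}=B+M+C$ with $\mathcal{A}\hookrightarrow M$, $\mathcal{A}\not\hookrightarrow B$, $\mathcal{A}\not\hookrightarrow C$, $B,C\neq\varnothing$ works in $\Acaz$ exactly as you say. The genuine gap is the step you yourself flag, and flagging it does not fill it: from this configuration the only thing you can provably produce is a \emph{nested} sequence of middle blocks (the copy of $\mathcal{A}$ inside $M$ again has a middle block, and so on), and a nested $\omega$-chain of intervals each absorbing $\mathcal{A}$ is perfectly consistent with scatteredness --- for instance every final segment of $\omega^\omega$ absorbs $\omega^\omega$, so descending chains of absorbing intervals occur in scattered orders. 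There is no soft lemma turning your configuration into \emph{two disjoint} absorbing subintervals: classically the $B+M+C$ configuration is contradictory for scattered indecomposable $\mathcal{A}$ only \emph{because of} Jullien's theorem, i.e.\ the very statement being proved, so the ``branching step'' of your plan is circular at its core. The place where branching genuinely enters the known argument is different: one shows that for scattered $\mathcal{A}$ \emph{no cut} $\braket{L,R}$ can satisfy both $\mathcal{A}\hookrightarrow L$ and $\mathcal{A}\hookrightarrow R$, because two fixed embeddings $f\colon\mathcal{A}\to L$ and $g\colon\mathcal{A}\to R$ generate, by the compositions $e_{\sigma^\frown 0}=e_\sigma\circ f$ and $e_{\sigma^\frown 1}=e_\sigma\circ g$, a $2^{<\N}$-indexed family of copies of $\mathcal{A}$ with each $\sigma^\frown 0$-subtree entirely to the left of the $\sigma^\frown 1$-subtree, whence $\Q\hookrightarrow\mathcal{A}$. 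Note that this recursion is arithmetic in the two fixed embeddings $f,g$, so the comprehension worries you raise about canonicalizing $\Sigma^1_1$ choices along the recursion simply do not arise where the $\Q$-construction is actually available.

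Your proposed mechanism for $\Delta^1_1$-ness is also off target: ``rank-type obstructions'' to embeddability of scattered orders are $\Atrz$-level machinery (even assigning Hausdorff-type ranks is of that strength) and are unavailable in $\Delta^1_1\bou\Caz$. In the proof cited by the paper (the paper itself gives no proof of this theorem, quoting Montalb\'an's Theorem 2.2), the $\Delta^1_1$-ness comes from the dichotomy above combined with indecomposability: every cut has at least one absorbing side by indecomposability and at most one by scatteredness, so $\mathcal{A}\hookrightarrow\mathcal{A}_{\geq a}$ holds iff $\mathcal{A}\not\hookrightarrow\mathcal{A}_{<a}$, a matching $\Sigma^1_1$/$\Pi^1_1$ pair. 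A single application of $\Delta^1_1$ comprehension then yields the canonical downward-closed set $D=\set{a\in A \mid \mathcal{A}\hookrightarrow\mathcal{A}_{\geq a}}$ (and its dual), and the endgame consists of applying indecomposability to the cut determined by $D$ to conclude right- or left-indecomposability directly --- not of manufacturing a copy of $\Q$ from your middle-block configuration. So your architecture both assumes the theorem at its combinatorial crux and allocates the comprehension axiom to the wrong place; to repair it you should replace the contraposition-from-$B+M+C$ plan by the exactly-one-absorbing-side dichotomy and the cut $\braket{D, A\setminus D}$.
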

Itay Neeman conducted a detailed analysis of this $\mathsf{INDEC}$.
\begin{theorem}[\cite{Neeman2008TheSO}]
	$\Rcaz + \Sigma^1_1\bou\Ind \vdash \Delta^1_1\bou\Caz \rightarrow \mathsf{INDEC} \rightarrow \unique\Pi^1_0\bou\Acz$, and furthermore, the converses do not hold and are separated by $\omega$-models.
\end{theorem}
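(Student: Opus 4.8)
The plan is to split the statement into its three logical components: the displayed chain of two implications, and the two separations asserting that neither implication reverses. The first implication $\Delta^1_1\bou\Caz \rightarrow \mathsf{INDEC}$ is already available as Montalbán's cited Theorem (\cite{Montalbn2006IndecomposableLO} Theorem 2.2), and it needs no induction beyond $\Rcaz$; so I would quote it directly and concentrate on (i) the new implication $\mathsf{INDEC} \rightarrow \unique\Pi^1_0\bou\Acz$ over $\Rcaz + \Sigma^1_1\bou\Ind$, and (ii) the construction of two $\omega$-models witnessing the failure of the converses, namely a model of $\mathsf{INDEC} + \lnot\Delta^1_1\bou\Caz$ and a model of $\unique\Pi^1_0\bou\Acz + \lnot\mathsf{INDEC}$.

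For the implication $\mathsf{INDEC} \rightarrow \unique\Pi^1_0\bou\Acz$, fix a $\Pi^1_0$ formula $\varphi(n,Y)$ with $\forall n \exists! Y\, \varphi(n,Y)$; the goal is a single $Z$ with $\forall n\, \varphi(n,Z_n)$. The idea is to reduce the assembly of the witnesses to one instance of indecomposability. I would build, arithmetically in the parameters, a dispersed linear order $\mathcal{L}$ whose blocks (indexed along $\omega$) encode the approximation processes to the witnesses $Y_n$, arranged so that $\mathcal{L}$ is indecomposable. \emph{Uniqueness} of the witnesses is what keeps $\mathcal{L}$ dispersed — there is no dense branching among rival candidates, so no copy of $\Q$ embeds — while the block layout forces indecomposability by design. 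Applying $\mathsf{INDEC}$ then yields an embedding of $\mathcal{L}$ into every proper tail (or every proper initial segment); such a self-embedding must respect the block structure, and from it one reads off, uniformly in $n$, enough of each $Y_n$ to define $Z$. The role of $\Sigma^1_1\bou\Ind$ (whose necessity is already flagged in the $\mathsf{SL}$/$\mathsf{ABW}$ analysis of \cite{Conidis_2012}) is to run this decoding coherently along $\omega$ and to certify at each stage that the emerging $Z_n$ satisfies $\varphi(n,\cdot)$.

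For the two non-implications I would produce $\omega$-models by tagged tree forcing, as in \cite{Neeman2008TheSO}. Note first that the previously introduced models do not suffice: $M_w$ refutes $\Delta^1_1\bou\Caz$ but is not known to satisfy $\mathsf{INDEC}$, and $M_m$ satisfies $\mathsf{INDEC}$ only because it satisfies the stronger $\Delta^1_1\bou\Caz$, so neither isolates $\mathsf{INDEC}$. Thus I would construct the two models afresh, in each case building the second-order part as a generic union of tagged trees and tuning the closure conditions so that the intended weak principle survives while a single designated instance of the stronger principle is killed: for $\unique\Pi^1_0\bou\Acz + \lnot\mathsf{INDEC}$ one plants a dispersed indecomposable order that the generic decides on neither side, and for $\mathsf{INDEC} + \lnot\Delta^1_1\bou\Caz$ one plants matching $\Sigma^1_1$ and $\Pi^1_1$ definitions whose common set is kept out of the model.

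The main obstacle is the verification of $\mathsf{INDEC}$ in the constructed models, not the forward implication. Because $\mathsf{INDEC}$ quantifies universally over \emph{all} dispersed indecomposable linear orders in the model, forcing it requires a density argument showing that every such order named by the conditions is driven to be indecomposable to one fixed side — and this must be carried out without adding the very sets one needs to keep $\Delta^1_1\bou\Caz$ false (and, symmetrically in the other model, without letting the generic decide the one order that is meant to refute $\mathsf{INDEC}$). Balancing these opposing demands is the delicate heart of the construction; by comparison, the order-coding and the $\Sigma^1_1\bou\Ind$ bookkeeping in the forward implication are routine once the encoding is fixed.
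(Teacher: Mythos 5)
You should first note what the paper itself does with this statement: it is quoted as background and attributed entirely to Neeman \cite{Neeman2008TheSO} (with the necessity of $\Sigma^1_1\bou\Ind$ cited separately as \cite{Neeman2009NECESSARYUO}); the paper contains no proof to reconstruct, so in context a citation is the intended ``proof''. Your decomposition does match the architecture of Neeman's actual work --- Montalb\'an's theorem for $\Delta^1_1\bou\Caz \rightarrow \mathsf{INDEC}$, a linear-order encoding for $\mathsf{INDEC} \rightarrow \unique\Pi^1_0\bou\Acz$, and two tagged-tree (Steel-style) forcing models for the separations --- and your observation that the models $M_w$ and $M_m$ appearing earlier in the paper do not isolate $\mathsf{INDEC}$ is correct.

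As a proof, however, your proposal has genuine gaps at exactly the load-bearing points. For the implication, everything is delegated to an unspecified encoding: you assert that a scattered, provably indecomposable order $\mathcal{L}$ can be built whose self-embeddings ``respect the block structure'' and let one read off the unique witnesses $Y_n$, but none of this is constructed or verified. One must prove in $\Rcaz+\Sigma^1_1\bou\Ind$, from the hypothesis $\forall n \exists ! Y \varphi(n,Y)$ alone, that $\mathcal{L}$ is indecomposable (a prerequisite before $\mathsf{INDEC}$ can even be applied), that it is dispersed (your claim that uniqueness of witnesses prevents an embedding of $\Q$ is plausible but unargued), and --- hardest --- that an \emph{arbitrary} embedding of $\mathcal{L}$ into every proper tail or initial segment uniformly computes the $Y_n$; this last step is the substantial content of Neeman's argument. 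Calling the encoding and the $\Sigma^1_1\bou\Ind$ bookkeeping ``routine once the encoding is fixed'' inverts where the difficulty lies: Neeman's separate paper \cite{Neeman2009NECESSARYUO} shows $\Sigma^1_1\bou\Ind$ is provably necessary for this very implication, so no induction-free ``routine'' decoding can exist, and any proposed encoding must make essential use of induction in a way your sketch never locates. Likewise, for the separations, ``tuning the closure conditions'' of a tagged-tree generic is a statement of intent, not an argument --- you correctly identify that verifying $\mathsf{INDEC}$ (respectively $\unique\Pi^1_0\bou\Acz$) in the extension is the delicate part, but that verification, which occupies the bulk of Neeman's paper, is precisely what is missing. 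In short: right skeleton, but neither implication nor either separation is actually established; for this paper's purposes you should simply cite \cite{Neeman2008TheSO}.
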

Furthermore, Neeman showed that $\Sigma^1_1\bou\Ind$ is essential in deriving $\unique\Pi^1_0\bou\Acz$ from $\mathsf{INDEC}$ (\cite{Neeman2009NECESSARYUO} Theorem 1.1).

Given the results discussed above and those that will be presented by Conidis below, it is evident that $\Delta^1_1\bou\Caz$ and $\mathsf{ABW}$ are incomparable over $\Rcaz + \Sigma^1_1\bou\Ind$.
\begin{theorem}[\cite{Conidis_2012} Theorem 3.1, Theorem 4.1]
The following holds:
\begin{itemize}
    \item There exists an $\omega$-model that satisfies $\Delta^1_1\bou\Caz$ but does not satisfy $\mathsf{ABW}$.
    \item There exists an $\omega$-model that satisfies $\mathsf{ABW}$ but does not satisfy $\mathsf{INDEC}$.
\end{itemize}

\end{theorem}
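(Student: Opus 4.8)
Both items are $\omega$-model separations, and as noted earlier such separations are established by explicitly constructing (typically by forcing) a model satisfying the lower theory while omitting a witness required by the upper one. Two structural constraints guide the constructions. To make a model $M$ fail $\mathsf{ABW}$, I must exhibit a parameter $P\in M$ and a $\Pi^1_0$ formula $A(X)$ such that $\{X\in M : M\models A(X)\}$ is not finitely enumerable in $M$ yet admits no $Z\in M$ satisfying the accumulation-point clause $\forall n\exists X[A(X)\land X\neq Z\land X[n]=Z[n]]$. To make a model fail $\mathsf{INDEC}$, I must place in $M$ a dispersed indecomposable linear order $\mathcal{A}$ together with cuts witnessing that $\mathcal{A}$ is neither indecomposable to the right nor to the left, i.e. the embeddings of $\mathcal{A}$ into the relevant one-sided pieces are absent from $M$. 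Since $\Delta^1_1\bou\Caz\vdash\mathsf{INDEC}$, any model witnessing the second item must in particular fail $\Delta^1_1\bou\Caz$; and since $\mathsf{ABW}$ and $\Delta^1_1\bou\Caz$ are incomparable, the separations do not follow formally from the implications already recorded, so fresh model constructions are needed.

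For the first item, the plan is to build a countable $\omega$-model $M$ generated by a generic sequence $\langle X_n\rangle_{n\in\N}$ that converges in Cantor space to a set $Z\notin M$, arranged so that a suitable $\Pi^1_0$ formula $A$ has exactly $\{X_n : n\in\N\}$ as its solutions. Because the $X_n$ converge to the single point $Z$, the only candidate accumulation point is $Z$, so forcing $Z\notin M$ and the family to be infinite makes $\mathsf{ABW}$ fail at the instance $A$. Simultaneously I would arrange that $M$ is closed under hyperarithmetic reducibility relative to its parameters, whence $M\models\Delta^1_1\bou\Caz$ (a $\Sigma^1_1$ and a $\Pi^1_1$ definition agreeing on $\N$ define a set that is $\Delta^1_1$, hence hyperarithmetic, in the parameters, hence already present). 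The point to force is that $Z$ is not hyperarithmetic in the parameters, so that hyp-closure is compatible with omitting $Z$.

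For the second item, the plan is to construct a model $M\models\mathsf{ABW}$ that fails $\Delta^1_1\bou\Caz$ in a manner tailored to a specific $\mathsf{INDEC}$ instance. Here I would code into $M$ a dispersed indecomposable linear order $\mathcal{A}$ of the Jullien type underlying $\mathsf{INDEC}$, whose one-sided indecomposability would require a $\Delta^1_1$-defined embedding that the forcing omits, so that $M\not\models\mathsf{INDEC}$. The delicate point is to keep $M$ rich enough for $\mathsf{ABW}$: by Conidis's Theorem~2.1 above $\mathsf{ABW}$ follows from $\Sigma^1_1\bou\Acz$, but since we must stay below $\Delta^1_1\bou\Caz$ I would instead secure $\mathsf{ABW}$ directly, ensuring by genericity that every infinite $\Pi^1_0$-definable family in $M$ acquires an accumulation point in $M$, while the single embedding that would secure $\mathsf{INDEC}$ is driven out.

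The main obstacle in both items is the simultaneous control: the forcing must be fine enough to preserve the positive axiom in the generic model --- closure under accumulation points for $\mathsf{ABW}$, or $\Delta^1_1$ comprehension for the first item --- while being arranged so that exactly the witness whose presence would secure the upper theory is omitted. This balance is what the tagged-tree (Steel) forcing machinery is designed to achieve, and the substantive work is the density and preservation arguments verifying both halves; I therefore refer to Conidis~\cite{Conidis_2012}, Theorems 3.1 and 4.1, for the full constructions.
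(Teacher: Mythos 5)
Your overall framing coincides with the paper's treatment of this statement: the paper gives no proof at all, simply citing Conidis (Theorems 3.1 and 4.1) for the forcing constructions, exactly as you do at the end. But your sketch of the first item contains a concrete error that would sink it as an actual proof outline. You claim that arranging $M$ to be ``closed under hyperarithmetic reducibility relative to its parameters'' yields $M\models\Delta^1_1\bou\Caz$, on the grounds that a $\Sigma^1_1$ and a $\Pi^1_1$ definition agreeing on $\N$ define a $\Delta^1_1$, hence hyperarithmetic, set. That argument needs the two definitions to agree \emph{in the real world}; inside an $\omega$-model $M$ you only get, from upward absoluteness of $\Sigma^1_1$ facts, that the set $S=\{n : M\models\varphi(n)\}$ is squeezed between the true extensions $\{n:\psi(n)\}\subset S\subset\{n:\varphi(n)\}$, and when $\varphi,\psi$ genuinely disagree outside $M$ this does not make $S$ hyperarithmetic in the parameters. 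Hyp-closure is necessary but far from sufficient for $\Delta^1_1\bou\Caz$: the paper itself supplies counterexamples to your claimed implication, since every $\omega$-model of a theory of hyperarithmetic analysis is hyp-closed (the paper's argument for models of $\JI_0$), yet Van Wesep's model $M_w$ satisfies $\unique\Pi^1_0\bou\Acz$ --- hence is hyp-closed --- while failing $\Delta^1_1\bou\Caz$, and likewise for Goh's $M_g$ relative to $\finite\Pi^1_0\bou\Acz$. So the step ``hyp-closure, whence $\Delta^1_1\bou\Caz$'' is precisely where the substantive preservation work of the Steel-style tagged-tree machinery lives, and it cannot be waved through.

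A second, milder weakness: in the second item your plan to ``secure $\mathsf{ABW}$ directly, ensuring by genericity that every infinite $\Pi^1_0$-definable family in $M$ acquires an accumulation point in $M$'' is again exactly the hard half of Conidis's Theorem 4.1, not a reduction of it --- note that $\mathsf{ABW}$ also holds vacuously when the solution set is finite in $M$'s sense, so one must control \emph{all} $\Pi^1_0$ instances with parameters from the generic model, not just the coded Jullien-type order. Since you ultimately defer both density/preservation arguments to Conidis, your text functions as the same citation the paper makes, but with an incorrect intermediate claim ($\mathrm{hyp}$-closure $\Rightarrow\Delta^1_1\bou\Caz$) that should be excised or replaced by an explicit appeal to the forcing's preservation theorem.
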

\begin{theorem}[\cite{Conidis_2012} Theorem4.7]\label{Theorem:ConidisがVanWesepのモデルはABWも充足してる定理}
	The $\omega$-model $M_w$, constructed by Van Wesep in the proof of \cite{VanWesep1977} Theorem 1.1, additionally satisfies $\mathsf{ABW}$.
\end{theorem}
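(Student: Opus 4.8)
The plan is to take $M_w$ \emph{exactly} as Van Wesep constructs it in the proof of \cite{VanWesep1977} Theorem 1.1 and to verify $\mathsf{ABW}$ by inspecting that model, not by re-running any construction. Fix an arithmetic formula $A(X)\in\Pi^1_0$ with parameter $p\in M_w$, and let $S=\{X\in M_w : A(X)\}$ be its solution set inside the model. Since $A$ is arithmetic it is absolute for the $\omega$-model $M_w$, so there is no definability subtlety in the membership relation $X\in S$, and the disjunction in $\mathsf{ABW}$ is governed purely by whether $S$ is finite or infinite in $M_w$. If $S$ is finite, all its members lie in $M_w$, so a single code $Z$ listing them is available already in $\Rcaz$ (finite tuples of model sets are coded); this is exactly the first disjunct ``$A$ has finitely many solutions''.

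The entire difficulty is the infinite case, where I must produce an accumulation point $Z\in M_w$ \emph{together with witnesses inside the model}, i.e. $Z\in M_w$ with $\forall n\,\exists X\in M_w\,(A(X)\wedge X\neq Z\wedge X[n]=Z[n])$. I reduce this to the single enumeration property $(\star)$: \emph{if $S$ is infinite, then $M_w$ contains a coded sequence $\langle X_i\rangle_{i\in\N}$ of pairwise distinct elements of $S$}. Granting $(\star)$, the accumulation point is obtained by an effective Bolzano--Weierstrass step that stays inside $M_w$: from $\langle X_i\rangle\in M_w$ define $Z$ as the leftmost ``infinitely often'' path, setting $Z(k)=1$ iff infinitely many $i$ satisfy $X_i\supseteq (Z[k])^\frown\langle 1\rangle$. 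The predicate ``infinitely many $i$ with $X_i\supseteq\tau$'' quantifies only over indices, hence is arithmetic in the code $\langle X_i\rangle$, so $Z$ is arithmetic in $\langle X_i\rangle$ and therefore $Z\le_{\mathrm{HYP}}\langle X_i\rangle$. Consequently $Z\in M_w$, because $M_w\models\unique\Pi^1_0\bou\Acz$ and every $\omega$-model of a theory of hyperarithmetic analysis is closed under hyperarithmetic reduction. By construction infinitely many distinct $X_i$ agree with $Z$ on each initial segment $Z[n]$, and at most one of them equals $Z$, so a witness $X\in M_w$ with $X\neq Z$ always exists; thus $M_w$ satisfies the accumulation clause.

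It remains to establish $(\star)$, and this is where Van Wesep's specific construction --- not merely the abstract properties $\unique\Pi^1_0\bou\Acz+\lnot\,\Delta^1_1\bou\Caz$ --- must be used. Indeed $(\star)$ cannot follow from those abstract properties alone: the Conidis model satisfying $\Delta^1_1\bou\Caz$ but not $\mathsf{ABW}$ (\cite{Conidis_2012} Theorem 3.1) is hyperarithmetically closed and a fortiori satisfies $\unique\Pi^1_0\bou\Acz$, so if $(\star)$ were automatic then $\mathsf{ABW}$ would hold there too. My plan for $(\star)$ is to unfold Van Wesep's generating data --- the mutually generic sequence whose hyperarithmetic closures form the directed union exhausting $M_w$ --- and to show that an infinite arithmetic class with parameter $p$ must, by the genericity and homogeneity of that sequence, realize an entire $M_w$-coded family (indeed a perfect set) of solutions inside the model, from which the enumeration $\langle X_i\rangle$ is read off.

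The main obstacle is precisely this last step: making rigorous \emph{why} ``infinitely many solutions in $M_w$'' forces the generic construction to have inserted an $M_w$-enumerable family of solutions, rather than infinitely many solutions scattered across unboundedly many levels of the generating sequence with no single code gathering them. This is the point that genuinely depends on the internal combinatorics of Van Wesep's forcing, and I expect the proof to hinge on a homogeneity/mutual-genericity lemma showing that a parameter $p$ appearing at a fixed level cannot ``see'' infinitely many solutions without the forcing already deciding a coded perfect tree of them; the effective Bolzano--Weierstrass reduction above then converts any such family into the desired accumulation point.
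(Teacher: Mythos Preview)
The paper does not prove this theorem at all: it merely quotes it as \cite{Conidis_2012} Theorem~4.7 and uses it as a black box. So there is no ``paper's own proof'' to compare against; your proposal is an attempt to supply an argument the paper deliberately outsources.

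On its own terms, your proposal is not a proof but an outline with the decisive step missing. You reduce $\mathsf{ABW}$ in $M_w$ to the enumeration property $(\star)$, and your Bolzano--Weierstrass extraction of an accumulation point from a coded sequence $\langle X_i\rangle\in M_w$ is fine (the ``infinitely many $i$'' predicate is indeed arithmetic in the code, so $Z\in M_w$ by closure under arithmetic definability). But you then explicitly concede that $(\star)$ is the ``main obstacle'' and only \emph{expect} it to follow from a homogeneity/mutual-genericity lemma about Van Wesep's forcing --- you neither state such a lemma precisely nor prove it. That is the entire content of the theorem: as you yourself observe, $(\star)$ cannot be derived from the abstract satisfaction of $\unique\Pi^1_0\bou\Acz$, so everything rests on the internal combinatorics of the tagged-tree construction, and that is exactly what you have left blank.

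There is also a small slip in your finite-case bookkeeping: you must argue not just that a code $Z$ for the finite list exists in $M_w$, but that $M_w$ satisfies the biconditional $\forall Y\,(A(Y)\leftrightarrow \exists n\le m\,(Y=Z_n))$. This does go through by arithmetic absoluteness, but it is worth saying explicitly. The substantive gap, however, is $(\star)$: until you actually carry out the forcing analysis (or cite Conidis's argument for it), what you have is a plausible strategy rather than a proof.
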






\section{Unique version of the Dependent Choice Axiom
}\label{subsection:従属選択公理の一意版}

In this section, we will discuss the following theories.
\begin{definition}\label{definition:選択公理各種の一意版}
	The names on the left are given to the axiom schemes consisting of the universal closures of the formulas on the right.
\begin{align*}
	\unique\Gamma\bou\mathrm{DC}~:~&\forall X \exists ! Y \varphi(X,Y) \rightarrow  \exists Y  \forall n \varphi(Y^n ,Y_{n}) ~~~ \mathrm{where}~\varphi\in \Gamma \\
	\unique\Gamma\bou\mathrm{TDC}~:~&\forall X \exists ! Y \varphi(X,Y)\land WO(Z) \rightarrow  \exists Y \forall a\in\field(Z) \varphi(Y^a ,Y_{a}) ~~~ \mathrm{where}~\varphi\in \Gamma
\end{align*}
And thus the following $\Lt$ theory is defined.
\begin{align*}
	\unique\Gamma\bou\Dcz &:= \unique\Gamma\bou\mathrm{DC} + \Rcaz \\
	\unique\Gamma\bou\Tdcz &:= \unique\Gamma\bou\mathrm{TDC} + \Rcaz
\end{align*}
\end{definition}
$\unique\Gamma\bou\mathrm{DC}$ is likely first introduced in this paper, whereas $\unique\Sigma^1_1\bou\mathrm{TDC}$ was introduced in \cite{Ruede02}.There, it is denoted as $\weak\Sigma^1_1\bou\mathrm{TDC}$, and primarily, the strength of the version without the uniqueness condition, $\Sigma^1_1\bou\mathrm{TDC}$, has been examined.

R{\"{u}}ede did not prove it, but $\Atrz$ and $\unique \Sigma^1_1\bou\Tdcz$ are equivalent \footnote{For instance, it is explicitly stated in \cite{Michael2021} Corollary 2.12.}.Let us briefly prove it.$\unique \Sigma^1_1\bou\Tdcz$ is clearly equivalent to $\Delta^1_1\bou\mathsf{TR}_0$.Furthermore, according to \cite{simpson_2009} Theorem V.5.1, $\Atrz$ and $\Sigma^1_1\bou\mathsf{SEP}_0$ are equivalent, and from the proof therein, the equivalence of $\Sigma^1_1\bou\mathsf{SEP}_0$ and $\Delta^1_1\bou\Trz$ immediately follows.Thus, we have $\Atrz \equiv \Sigma^1_1\bou\mathsf{SEP}_0 \equiv \Delta^1_1\bou\Trz \equiv \unique \Sigma^1_1\bou\Tdcz$.Let us summarize this as a proposition.

\begin{proposition}
 $\Atrz \equiv \Sigma^1_1\bou\mathsf{SEP}_0 \equiv \Delta^1_1\bou\Trz \equiv \unique \Sigma^1_1\bou\Tdcz$.
\end{proposition}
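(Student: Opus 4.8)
The plan is to establish the chain as three separate links: invoke Simpson's Theorem V.5.1 as a black box for the central link, and reduce the two outer links to a single syntactic observation about the uniqueness clause. I take $\Atrz \equiv \Sigma^1_1\bou\mathsf{SEP}_0$ directly from \cite{simpson_2009} Theorem V.5.1, so the only work is the two equivalences flanking it.

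To handle the outer equivalence $\unique\Sigma^1_1\bou\Tdcz \equiv \Delta^1_1\bou\Trz$, the key observation is that a $\Sigma^1_1$ operator carrying a provable uniqueness clause is in fact $\Delta^1_1$. Concretely, if $\forall X \exists! Y\,\varphi(X,Y)$ holds with $\varphi \in \Sigma^1_1$, then over $\Rcaz$ one has $\varphi(X,Y) \leftrightarrow \forall Y'(\varphi(X,Y') \to Y = Y')$, where the right-to-left direction uses the existence part of the hypothesis and the left-to-right direction uses uniqueness. The right-hand side is $\Pi^1_1$, so $\varphi$ is simultaneously $\Sigma^1_1$ and $\Pi^1_1$, hence $\Delta^1_1$. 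Under this identification the scheme $\unique\Sigma^1_1\bou\Tdcz$ becomes exactly transfinite recursion along a well-order $Z$ for the $\Delta^1_1$ operator $\varphi$: the column $Y_a$ is required to be the $\varphi$-value of the assembled history $Y^a = \bigcup_{b <_Z a} Y_b \times \{b\}$ below $a$, for every $a \in \field(Z)$. Conversely every $\Delta^1_1$ operator is in particular a functional $\Sigma^1_1$ operator, so its recursion equation pins each stage down uniquely and falls under $\unique\Sigma^1_1\bou\Tdcz$. Matching the section notation $Y^a, Y_a, \field(Z)$ with the stages of the recursion then gives the equivalence.

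For $\Sigma^1_1\bou\mathsf{SEP}_0 \equiv \Delta^1_1\bou\Trz$ I would read the content off the proof of Simpson's theorem, whose cycle passes through $\Delta^1_1\bou\Trz$. The cheap direction is $\Delta^1_1\bou\Trz \vdash \Atrz \equiv \Sigma^1_1\bou\mathsf{SEP}_0$, since arithmetic operators are in particular $\Delta^1_1$ and thus arithmetic transfinite recursion is a special case of $\Delta^1_1$ transfinite recursion. The substantive direction is $\Sigma^1_1\bou\mathsf{SEP}_0 \vdash \Delta^1_1\bou\Trz$: one runs the $\Delta^1_1$ recursion along $Z$ by applying, at each stage, a $\Sigma^1_1$-separation to the $\Sigma^1_1$ presentation and the complement of the $\Pi^1_1$ presentation of the next value, turning it into an actual set, and iterating this construction arithmetically. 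Inspecting Simpson's proof, this iterated-separation construction is precisely what is carried out there.

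I expect the main obstacle to be exactly this direction, $\Sigma^1_1\bou\mathsf{SEP}_0 \vdash \Delta^1_1\bou\Trz$: one must check that the $\Sigma^1_1$ and $\Pi^1_1$ presentations of each stage remain in agreement all along the well-order, and that the separation genuinely determines the unique recursion value at every $a \in \field(Z)$, so that the resulting $Y$ provably satisfies the recursion equation. The remaining two links are definitional unwindings, and the real weight rests on the black-boxed Simpson V.5.1 together with a faithful reading of its proof.
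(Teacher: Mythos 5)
Your proposal is correct and follows essentially the same route as the paper: the paper likewise cites \cite{simpson_2009} Theorem V.5.1 for $\Atrz \equiv \Sigma^1_1\bou\mathsf{SEP}_0$, reads the equivalence with $\Delta^1_1\bou\Trz$ off the proof of that theorem, and treats $\unique\Sigma^1_1\bou\Tdcz \equiv \Delta^1_1\bou\Trz$ as clear. Your only addition is to spell out the uniqueness-makes-it-$\Delta^1_1$ observation ($\varphi(X,Y)\leftrightarrow \forall Y'(\varphi(X,Y')\to Y=Y')$) that the paper leaves implicit, which is a faithful expansion rather than a different argument.
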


$\unique\Sigma^1_1\bou\mathrm{DC}$, by its form, obviously follows from $\unique \Sigma^1_1\bou\Tdcz$, that is, $\Atrz$.For the same reason, it also follows from $\Sigma^1_1\bou\Dcz$.Moreover, as is well-known, $\Sigma^1_1\bou\Dcz$ and $\Atrz$ are incomparable.Therefore, $\unique\Sigma^1_1\bou\mathrm{DC}$ is strictly weaker than both.
On the other hand, since $\unique\Pi^1_0\bou\mathrm{DC}$ allows for the iteration of $\omega$ Turing jumps, it cannot be proved from $\Sigma^1_1\bou\Acz$, which is conservative over $\Acaz$ for $\Pi^1_2$ (\cite{simpson_2009}, Theorem IX.4.4).Thus, the following statement holds.

\begin{proposition}\label{proposition:Thm(ATR)∩Thm(Σ11DC)からuniquΣ11DCだせる}
The intersection of theorems, $\mathrm{Thm}(\Atrz) \cap \mathrm{Thm}(\Sigma^1_1\bou\Dcz)$, is stronger than $\unique \Sigma^1_1\bou\Dcz$, which is at least as strong as $\unique \Pi^1_0\bou\Dcz$, which in turn is at least as strong as $\Acaz^+$.Additionally, $\Sigma^1_1\bou\Acz \not\vdash \unique \Pi^1_0\bou\Dcz$.
Here, $\mathrm{Thm}(T) = \set{ \sigma | T\vdash \sigma}$ denotes the set of theorems provable by theory $T$.
\end{proposition}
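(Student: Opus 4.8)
The plan is to verify the displayed chain of implications link by link and then dispatch the non-derivability at the end. The two leftmost links are essentially already established in the paragraph preceding the statement. For the first, that paragraph records $\Atrz \vdash \unique\Sigma^1_1\bou\mathrm{DC}$ and $\Sigma^1_1\bou\Dcz \vdash \unique\Sigma^1_1\bou\mathrm{DC}$, and both theories extend $\Rcaz$; hence every axiom of $\unique\Sigma^1_1\bou\Dcz$ lies in $\mathrm{Thm}(\Atrz) \cap \mathrm{Thm}(\Sigma^1_1\bou\Dcz)$, so this intersection, read as a theory, proves $\unique\Sigma^1_1\bou\Dcz$. For the second link, since every arithmetic ($\Pi^1_0$) formula is trivially $\Sigma^1_1$, the scheme $\unique\Pi^1_0\bou\mathrm{DC}$ is a subscheme of $\unique\Sigma^1_1\bou\mathrm{DC}$, giving $\unique\Sigma^1_1\bou\Dcz \vdash \unique\Pi^1_0\bou\Dcz$ at once.

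The substance is the third link, $\unique\Pi^1_0\bou\Dcz \vdash \Acaz^+$, which I would split into first securing $\Acaz$ and then iterating the jump. To obtain $\Acaz$ I would derive $\unique\Pi^1_0\bou\Acz$ from $\unique\Pi^1_0\bou\Dcz$ by the standard reduction of dependent choice to choice: given $\forall n \exists ! Y\,\psi(n,Y)$ with $\psi \in \Pi^1_0$, apply $\unique\Pi^1_0\bou\mathrm{DC}$ to a formula $\varphi(X,Y)$ that records the stage number inside each coordinate (so that the length $n$ of $X = Y^n$ is arithmetically recoverable) and imposes $\psi(n,\cdot)$ on the $n$-th value; uniqueness of $\psi$ transfers to uniqueness of $\varphi$. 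Once $\unique\Pi^1_0\bou\Acz$ is available, item~2 of the earlier Proposition on $\unique\Gamma\bou\Acz$ ($\unique\Sigma^0_2\bou\Acz \equiv \Acaz$), together with $\Sigma^0_2 \subseteq \Pi^1_0$, yields $\Acaz$.

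With $\Acaz$ in hand the jump operator $X \mapsto \TJ(X)$ is total and single-valued, so for a fixed parameter $A$ the arithmetic formula $\varphi(X,Y) :\equiv [X=\varnothing \land Y=A] \lor [X \neq \varnothing \land Y=\TJ(X)]$ satisfies $\forall X \exists ! Y\,\varphi(X,Y)$. Feeding it into $\unique\Pi^1_0\bou\mathrm{DC}$ produces a $Y$ with $Y_0 = A$ and $Y_{n+1} = \TJ(Y^{n+1})$; since $Y^{n+1} \equiv_T A^{(n)}$ uniformly in $n$, each $Y_n \equiv_T A^{(n)}$ uniformly, whence the set $\{\,\langle n,k\rangle \mid k \in A^{(n)}\,\}$ can be formed by arithmetic comprehension relative to $Y$. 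As $A$ was arbitrary this is exactly $\Acaz^+$. For the final non-derivability I would note that $\Acaz^+$ contains the $\Pi^1_2$ axiom $\forall X \exists Y(Y = X^{(\omega)})$, which fails in the minimum $\omega$-model $\ARITH$ of $\Acaz$ and so is unprovable in $\Acaz$; since $\Sigma^1_1\bou\Acz$ is $\Pi^1_2$-conservative over $\Acaz$ (\cite{simpson_2009} Theorem IX.4.4), it cannot prove $\Acaz^+$, and a fortiori $\Sigma^1_1\bou\Acz \not\vdash \unique\Pi^1_0\bou\Dcz$.

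The main obstacle is the third link: both the bookkeeping in the $\mathrm{DC} \Rightarrow \mathrm{AC}$ reduction (recovering the stage index arithmetically so that uniqueness is preserved) and the verification that the DC-sequence genuinely computes the iterated jumps uniformly, so that the honest $\omega$-jump $A^{(\omega)}$ is obtained rather than merely $\omega$ many sets Turing-above the finite jumps. Everything else is either inclusion of formula classes or a citation.
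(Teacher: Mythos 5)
Your proposal is correct and takes essentially the same route as the paper: the paper likewise gets the first two links from the immediately preceding discussion ($\Atrz \equiv \unique\Sigma^1_1\bou\Tdcz \vdash \unique\Sigma^1_1\bou\mathrm{DC}$, $\Sigma^1_1\bou\Dcz \vdash \unique\Sigma^1_1\bou\mathrm{DC}$, and $\Pi^1_0 \subseteq \Sigma^1_1$), justifies $\unique\Pi^1_0\bou\Dcz \vdash \Acaz^+$ by exactly your jump-iteration idea (``$\unique\Pi^1_0\bou\mathrm{DC}$ allows for the iteration of $\omega$ Turing jumps''), and obtains $\Sigma^1_1\bou\Acz \not\vdash \unique\Pi^1_0\bou\Dcz$ from the same $\Pi^1_2$-conservativity citation, with your $\mathrm{DC}\Rightarrow\mathrm{AC}$ bookkeeping matching the paper's later reformulation (Proposition \ref{proposition:DCの言い換え}). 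The only quibble is the degenerate case $A=\varnothing$ in your jump formula $\varphi$ (then every $Y^n=\varnothing$ and the jump clause never fires, yielding the all-empty sequence), which is repaired by seeding with $\{0\}\oplus A$ in place of $A$.
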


We will now discuss the relationship between $\unique \Gamma \bou\Dcz$ and $\unique \Gamma \bou\Acz$ and proceed to strengthen this proposition.

Firstly, similar to the standard $\Gamma \bou \mathrm{DC}$, $\unique\Gamma\bou\mathrm{DC}$ can also be reformulated as follows:
\begin{proposition}\label{proposition:DCの言い換え}
	When $\Gamma$ is either $\Sigma^1_1$ or $\Pi^1_0$, the following diagram is equivalent over $\Rcaz$:
\begin{enumerate}
	\item $\unique\Gamma\bou\mathrm{DC}$.
	\item $\forall X \exists ! Y \theta(X,Y) \rightarrow \forall A \exists Y (Y_0= A \land \forall n \theta(Y_n ,Y_{n+1})) ~~~ \mathrm{where}~\theta\in \Gamma$.
	\item $\forall n \forall X \exists ! Y \psi(n,X,Y) \rightarrow \forall A \exists Y (Y_0= A \land \forall n \psi(n,Y_n ,Y_{n+1})) ~~~ \mathrm{where}~\psi\in \Gamma$.
\end{enumerate}
\end{proposition}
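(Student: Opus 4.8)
The plan is to prove the cycle $(3)\Rightarrow(2)\Rightarrow(1)\Rightarrow(3)$, isolating the two genuinely different features of the three forms: whether the recursion reads the whole coded history $Y^n$ or only the last value $Y_n$ (form $(1)$ versus forms $(2),(3)$), and whether the step number $n$ may occur as a parameter (form $(3)$ versus the rest). The implication $(3)\Rightarrow(2)$ is immediate, since $(2)$ is the instance of $(3)$ obtained by taking $\psi(n,X,Y) := \theta(X,Y)$, a formula that ignores $n$. So the real content lies in $(2)\Rightarrow(1)$ and $(1)\Rightarrow(3)$.

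For $(2)\Rightarrow(1)$, assume $\forall X\exists! Y\,\varphi(X,Y)$ with $\varphi\in\Gamma$. I would feed form $(2)$ a relation $\theta$ whose successive values code the growing histories: the intended run has $Z_n$ encoding the pair $(n,Y^n)$, and $\theta(U,V)$ asserts that, decoding $U$ as $(n,H)$, the set $V$ encodes $(n+1,H\cup(W\times\{n\}))$, where $W$ is read off $V$ as $(\text{history part of }V)_n$ and $\varphi(H,W)$ holds. Starting from $A$ coding $(0,\varnothing)$ and extracting $Y$ by $Y_n := (\text{history part of }Z_{n+1})_n$, a short induction, available in $\Rcaz$ because the relevant statement ``history part of $Z_n$ equals $Y^n$'' is $\Delta^0_1$, yields $\forall n\,\varphi(Y^n,Y_n)$. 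The implication $(1)\Rightarrow(3)$ runs in the opposite direction: given $\psi$ with $\forall n\forall X\exists! Y\,\psi(n,X,Y)$ and a set $A$, I would define $\varphi(U,V)$ so as to force the value part of $V$ to equal $A$ when $U$ is the empty history, and otherwise to read the step number $n=\len(U)$ and the last value off $U$ and demand $\psi(n-1,\text{last value of }U,\text{value part of }V)$. Form $(1)$ then produces $Y$ with $Y_0=A$ and, after extracting value parts, $\psi(n,Y_n,Y_{n+1})$ for all $n$.

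Two points require care, and together they constitute the main obstacle. First, all three forms apply only when the driving relation is total and single-valued on every set, not merely on the intended codes, so the auxiliary relations must be made provably functional everywhere. In the $(2)\Rightarrow(1)$ direction this is automatic from the assumed $\forall X\exists! Y\,\varphi$; in $(1)\Rightarrow(3)$ I would add arithmetic default clauses (e.g.\ output $\varnothing$) for malformed inputs and for $U$ having infinitely many nonempty columns, so that ``the number of columns of $U$'' is well-defined exactly when needed. To make column lengths detectable at all, each column should be tagged with its index so that it is never accidentally empty, turning $\len(U)$ into the least $k$ with all columns $\ge k$ empty, an arithmetic condition. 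Second, and most delicate, the auxiliary relations must stay inside $\Gamma$: for $\Gamma=\Sigma^1_1$ this is free, since we only combine the given $\Gamma$-formula with arithmetic guards using $\land,\lor$ and number quantifiers; but for $\Gamma=\Pi^1_0$ I must avoid existentially quantifying the ``next value'' as a set, because arithmetic formulas are not closed under set quantification. The device that saves both cases is never to quantify the next value but to read it directly off the output $V$, so that the single occurrence of the given relation is applied to sets definable arithmetically from $U$ and $V$; this keeps $\varphi,\theta\in\Pi^1_0$ when the input relation is, and $\in\Sigma^1_1$ when it is.

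Finally, I would record that beyond the $\Delta^0_1$ comprehension used to extract the final witness and the $\Sigma^0_1$ induction used to verify its coherence, no further axioms are invoked, so all three equivalences hold over $\Rcaz$ exactly as stated, uniformly for $\Gamma\in\{\Sigma^1_1,\Pi^1_0\}$.
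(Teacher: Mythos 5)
Your proof is correct in substance, and it rests on the same two devices that drive the paper's proof: tagging each value with its step index so that the length of a coded history is arithmetically recoverable (the paper's $(X_n)_L=\{n\}$ trick), and reading the ``next value'' directly off the output set rather than quantifying over it, which is exactly how the paper keeps the auxiliary relation inside $\Pi^1_0$ resp.\ $\Sigma^1_1$. What differs is the routing. The paper uses form (3) as a hub: it proves $(1\rightarrow 3)$ and $(2\rightarrow 3)$ by counter-tagging, proves $(3\rightarrow 1)$ by letting the values of the numbered recursion be the accumulated histories themselves, via $\psi(n,X,Y):\equiv \forall i<n(Y_i=X_i)\land\varphi(X,Y_n)\land\forall i>n(Y_i=\varnothing)$, and notes $(3\rightarrow 2)$ is trivial. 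You instead close the cycle $(3)\Rightarrow(2)\Rightarrow(1)\Rightarrow(3)$, and your $(2)\Rightarrow(1)$ fuses the paper's two separate codings into a single construction: since form (2) supplies neither the history nor the step counter, your values carry the pair (counter, history) simultaneously. This buys a three-implication proof in place of the paper's four, at the cost of one heavier construction; the paper's hub decomposition keeps each coding isolated and each verification shorter.

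Two small caveats, neither fatal. First, your claim that totality of the auxiliary relation in $(2)\Rightarrow(1)$ is ``automatic from $\forall X\exists!Y\,\varphi$'' is not quite right as literally formulated: if $U$ decodes to $(n,H)$ where $H$ has junk in column $n$, the circular condition ``$V$ encodes $(n+1,\,H\cup(W\times\{n\}))$ where $W$ is read off $V$ as column $n$ of its history part'' forces $W\supseteq H_n$, and no $V$ exists when the unique $\varphi$-successor $W_0$ of $H$ fails to contain $H_n$; so this direction also needs a well-formedness guard (columns $\ge n$ of $H$ empty, with a default output otherwise) --- precisely the default-clause device you already deploy in $(1)\Rightarrow(3)$, so the repair is immediate. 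Second, the coherence invariant ``the history part of $Z_n$ equals $Y^n$'' is $\Pi^0_1$ in the parameters rather than $\Delta^0_1$, but $\Pi^0_1$ induction is available in $\Rcaz$, so the verification goes through as you intend.
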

\begin{proof}
Although straightforward, the proof is provided to ensure that the reader can visually verify that the conditions on the class of formulas are safe.Below, it is assumed that $X = X_L \oplus X_R$.

$(1\rightarrow 3)$ 
Assume $\forall n\forall X \exists ! Y \psi(n,X,Y)$, and let $A$ be arbitrary.Here, let $\varphi(X,Y)$ be the following formula:
\begin{align*}
[X = \varnothing &\rightarrow Y= \{0\}\oplus A]\land \\
[\exists n ((X_{n})_L = \{n\} \land \forall m>n ( X_m = \varnothing) )  &\rightarrow  \exists n ((X_{n})_L = \{n\} \land \psi(n,(X_n)_R,Y_R) \land Y_L=\{n+1\}] \land \\
[\mathrm{otherwise} &\rightarrow Y=\varnothing).]
\end{align*}
From the assumption, it follows that $\forall X \exists ! Y \varphi(X,Y)$ holds.
Given this, if one takes $\braket{Y_n}_{n\in\N}$ such that $\forall n\psi(Y^n,Y_{n})$, then defining $\braket{Z_n}_{n\in\N}$ where $Z_n := (Y_{n})_R$, ensures that $Z_0 = A_0$ and $\forall n \psi(n,Z_n,Z_{n+1})$ are satisfied.

$(3\rightarrow 1)$
Assume $\forall X \exists ! Y \varphi(X,Y)$.Now, let $\psi(n, X, Y)$ be the following formula:
\[\forall i< n(Y_ i = X_i )\land \varphi(X,Y_n) \land \forall i>n ( Y_i = \varnothing).\]
From the assumption, it follows that $\forall n\forall X \exists ! Y \psi(n,X,Y)$.By using the assumption 3, if one takes $\braket{Y_n}_{n\in\N}$ such that $Y_0 = \varnothing$ and $\psi(n,Y_n,Y_{n+1})$, then defining $\braket{Z_n}_{n\in\N}$ where $Z_n := (Y_{n+1})_n$ ensures that $\forall n \varphi(Z^n,Z_{n})$ is satisfied.

$(2\rightarrow 3)$ 
Assume $\forall n \forall X \exists ! Y \psi(n,X,Y)$ and let $A$ be arbitrary.Now, let $\theta(X,Y)$ be the following formula:
\begin{align*}
	[\exists n(X_L = \{n\}) &\rightarrow \exists n(X_L = \{n\} \land Y_L=\{n+1\} \land \psi(n,X_R,Y_R)
	)]\land \\
	[\lnot \exists n(X_L = \{n\}) &\rightarrow Y=\varnothing .]
\end{align*}
From the assumption, it follows that $\forall X \exists ! Y \theta(X,Y)$.According to the assumption 2, if one takes $\braket{Y_n}_{n\in\N}$ such that $Y_0 = \{0\} \oplus A$ and $\theta(Y_n, Y_{n+1})$, then defining $\braket{Z_n}_{n\in\N}$ where $Z_n := (Y_n)_R$ ensures that $Z_0 = A$ and $\forall n \psi(n, Z_n, Z_{n+1})$ are satisfied.

$(3 \rightarrow 2)$ is obvious.
\end{proof}
In particular, from the third reformulation, it is clear that when $\Gamma$ is either $\Pi^1_0$ or $\Sigma^1_1$, $\unique\Gamma\bou\Dcz \vdash \unique\Gamma\bou\Acz$ follows.Together with the fact mentioned in Proposition \ref{proposition:Thm(ATR)∩Thm(Σ11DC)からuniquΣ11DCだせる}, it is evident that both $\unique\Pi^1_0\bou\Dcz$ and $\unique\Sigma^1_1\bou\Dcz$ belong to hyperarithmetic analysis.Although the converse of $\unique\Gamma\bou\Dcz \vdash \unique\Gamma\bou\Acz$ does not hold, it can be derived by adding appropriate induction.

As with $\unique \Gamma \bou \Acz$, $\unique \Gamma \bou \Dcz$ is equivalent from $\Pi^0_2$ to $\Pi^1_0$ (cf.~\cite{SuzukiweakAC}, Remark 7).

\begin{lemma}\label{lemma:uniqueArithAC+Σ11INDproveuniqueArithDC}
The following is true for every $k \in \omega$:
\[\unique\Pi^1_0\bou\Acz +\Sigma^1_1\bou\Ind \vdash\unique\Pi^1_0\bou\Dcz,\]	
\[\unique\Sigma^1_{k+1}\bou\Acz +\Sigma^1_{k+1}\bou\Ind \vdash\unique\Sigma^1_{k+1}\bou\Dcz.\]	
\end{lemma}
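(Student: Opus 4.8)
The plan is to reduce unique dependent choice to unique choice by quantifying over \emph{finite approximations} of the dependent-choice sequence, using the length $n$ of the approximation as the index for the choice axiom; the passage from an approximation of length $n$ to one of length $n+1$ is where induction enters. Throughout, write $\varphi\in\Gamma$ (with $\Gamma=\Pi^1_0$, resp.\ $\Gamma=\Sigma^1_{k+1}$) for a formula satisfying $\forall X\exists ! Y\,\varphi(X,Y)$, and target the instance $\exists Y\,\forall n\,\varphi(Y^n,Y_n)$ of $\unique\Gamma\bou\mathrm{DC}$ directly, so as not to rely on Proposition~\ref{proposition:DCの言い換え} (which is stated only for $\Pi^1_0$ and $\Sigma^1_1$).

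First I would introduce the approximation predicate
\[\psi(n,Z):\leftrightarrow Z=\bigcup_{l<n}Z_l\times\{l\}\ \land\ \forall l<n\,\varphi(Z^l,Z_l),\]
asserting that $Z$ codes exactly the first $n$ columns of a $\varphi$-coherent sequence. The key claim is $\forall n\,\exists ! Z\,\psi(n,Z)$. Uniqueness is proved by bare number induction on the columns: if $\psi(n,Z)$ and $\psi(n,Z')$, then $Z^l=Z'^l$ propagates from $l$ to $l+1$ by the uniqueness half of $\forall X\exists ! Y\,\varphi$; this is induction on an arithmetic (indeed $\Pi^0_1$) formula and is available in $\Rcaz$. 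Existence, $\forall n\,\exists Z\,\psi(n,Z)$, is proved by induction on $n$: given a witness $Z$ for length $n$, the unique $Y$ with $\varphi(Z,Y)$ (note $Z=Z^n$) extends $Z$ to a witness for length $n+1$. This induction runs on the formula $\exists Z\,\psi(n,Z)$, and it is precisely here that $\Sigma^1_1\bou\Ind$ (resp.\ $\Sigma^1_{k+1}\bou\Ind$) is consumed.

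Having established $\forall n\,\exists ! Z\,\psi(n,Z)$, I would apply $\unique\Gamma\bou\Acz$ to obtain $W=\braket{W_n}_{n\in\N}$ with $\psi(n,W_n)$ for all $n$, and then glue: by uniqueness, the length-$n$ restriction of $W_m$ satisfies $\psi(n,\cdot)$ and hence equals $W_n$ whenever $n\le m$, so the columns cohere, and $Y_l:=(W_{l+1})_l$ defines a set $Y$ (in $\Rcaz$) with $Y^n=W_n$ and therefore $\varphi(Y^n,Y_n)$ for every $n$. The main obstacle is the complexity bookkeeping in the $\Sigma^1_{k+1}$ case: the conjunct $\forall l<n\,\varphi(Z^l,Z_l)$ is a bounded universal number quantifier over a $\Sigma^1_{k+1}$ formula, so to keep $\psi$ (and hence $\exists Z\,\psi$) at level $\Sigma^1_{k+1}$ — both for the existence induction and for the final application of $\unique\Sigma^1_{k+1}\bou\Acz$ — I would invoke Lemma~\ref{lemma:Σ1k上Σ1kは有界任意量化に閉じる} to pull the witnessing sets out of the bounded quantifier, which again requires $\Sigma^1_{k+1}\bou\Ind$. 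In the $\Pi^1_0$ case this bookkeeping is trivial, since $\Pi^1_0$ is closed under bounded number quantification outright, leaving $\Sigma^1_1\bou\Ind$ needed only for the existence induction.
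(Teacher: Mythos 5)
Your proposal is correct and is essentially the paper's own argument: the paper likewise defines a predicate $\psi(n,Z)$ asserting that $Z$ is a length-$n$ coherent approximation, gets $\forall n\,\exists! Z\,\psi(n,Z)$ by uniqueness propagation plus $\Sigma^1_{k+1}$ induction on the existence claim (invoking Lemma~\ref{lemma:Σ1k上Σ1kは有界任意量化に閉じる} for the same complexity bookkeeping), and then applies $\unique\Gamma\bou\Acz$ and glues. The only deviation is cosmetic: the paper routes through the reformulation in Proposition~\ref{proposition:DCの言い換え} (proving the form $Y_0=A\land\forall n\,\theta(n,Y_n,Y_{n+1})$), whereas you target $\exists Y\,\forall n\,\varphi(Y^n,Y_n)$ directly and spell out the gluing, which makes the $\Sigma^1_{k+1}$ cases marginally more self-contained since that proposition is stated only for $\Pi^1_0$ and $\Sigma^1_1$.
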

\begin{proof}
Since the case is similar for other conditions, we will prove assuming $\theta \in \Sigma^1_1$.Assume $\forall n X \exists ! Y \theta(n, X, Y)$ and let $A$ be arbitrary.Define the following formula as $\psi(n, Z)$:
\[
Z_0 = A \land \forall i < n \theta(i, Z_i, Z_{i+1}) \land \forall i > n (Z_i = \varnothing).
\]
From Lemma \ref{lemma:Σ1k上Σ1kは有界任意量化に閉じる}, $\psi$ is also in $\Sigma^1_1$.Therefore, by $\Sigma^1_1$ induction, $\forall n \exists Z \psi(n, Z)$ can be shown.Uniqueness is obvious, thus $\forall n \exists !Z \psi(n, Z)$ holds.Consequently, a solution can be constructed via $\unique\Sigma^1_1\bou\Acz$.
\end{proof}
Therefore, the $\omega$-models of $\unique\Gamma\bou\Acz$ and $\unique\Gamma\bou\Dcz$ completely coincide.Particularly from this fact, the following holds with the $\omega$-model $M_w$(cf.Theorem \ref{Theorem:ConidisがVanWesepのモデルはABWも充足してる定理}) as witness:
\begin{corollary}
	$\unique\Pi^1_0\bou\Dcz < \unique\Sigma^1_1\bou\Dcz.$
\end{corollary}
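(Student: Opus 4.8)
The plan is to read the relation ``$<$'' as strict implication between theories, so that two assertions must be verified: that $\unique\Sigma^1_1\bou\Dcz$ proves $\unique\Pi^1_0\bou\Dcz$, and that the converse fails. The first half requires no work: every $\Pi^1_0$ (arithmetic) formula is in particular $\Sigma^1_1$, so each instance of the $\unique\Pi^1_0\bou\mathrm{DC}$ scheme is already an instance of $\unique\Sigma^1_1\bou\mathrm{DC}$. Hence $\unique\Sigma^1_1\bou\Dcz \vdash \unique\Pi^1_0\bou\Dcz$, and the whole content of the statement lies in the strictness.

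For the strictness I would exhibit a single $\omega$-model separating the two theories, the natural candidate being Van Wesep's model $M_w$ of Theorem \ref{theorem:M_w}. First I would argue $M_w \models \unique\Pi^1_0\bou\Dcz$. By Theorem \ref{theorem:M_w} we have $M_w \models \unique\Pi^1_0\bou\Acz$, and since $M_w$ is an $\omega$-model its first-order part is the standard $\omega$, so every induction scheme---in particular $\Sigma^1_1\bou\Ind$---holds in $M_w$. Lemma \ref{lemma:uniqueArithAC+Σ11INDproveuniqueArithDC} then upgrades $\unique\Pi^1_0\bou\Acz$ to $\unique\Pi^1_0\bou\Dcz$ inside $M_w$; this is precisely the coincidence of the $\omega$-models of the $\Acz$- and $\Dcz$-variants noted just above the corollary.

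It remains to see that $M_w \not\models \unique\Sigma^1_1\bou\Dcz$. By Proposition \ref{proposition:DCの言い換え} we have $\unique\Sigma^1_1\bou\Dcz \vdash \unique\Sigma^1_1\bou\Acz$, and by Remark \ref{remark:Yamero_Sigma11_Pi10} the latter is equivalent to $\Delta^1_1\bou\Caz$. Since $M_w \not\models \Delta^1_1\bou\Caz$ by Theorem \ref{theorem:M_w}, it follows that $M_w \not\models \unique\Sigma^1_1\bou\Acz$, hence $M_w \not\models \unique\Sigma^1_1\bou\Dcz$. Combined with the previous paragraph, $M_w$ witnesses $\unique\Pi^1_0\bou\Dcz \not\vdash \unique\Sigma^1_1\bou\Dcz$, which together with the easy implication yields the claimed strict inequality.

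There is essentially no obstacle here: every ingredient is already in place, the real work having been done in Lemma \ref{lemma:uniqueArithAC+Σ11INDproveuniqueArithDC} (yielding the coincidence of $\omega$-models) and in the cited model constructions. The only point needing care is to invoke that coincidence in the correct direction---transferring $M_w \models \unique\Pi^1_0\bou\Acz$ to $M_w \models \unique\Pi^1_0\bou\Dcz$ by appealing to the fact that full induction, and so $\Sigma^1_1\bou\Ind$, is automatic in any $\omega$-model.
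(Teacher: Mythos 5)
Your proposal is correct and coincides with the paper's own argument: the paper derives this corollary precisely from the observation that the $\omega$-models of $\unique\Gamma\bou\Acz$ and $\unique\Gamma\bou\Dcz$ coincide (full induction, hence $\Sigma^1_1\bou\Ind$, being automatic in $\omega$-models, so the lemma upgrading $\unique\Gamma\bou\Acz$ to $\unique\Gamma\bou\Dcz$ applies), and it takes Van Wesep's model $M_w$ --- which satisfies $\unique\Pi^1_0\bou\Acz$ but not $\Delta^1_1\bou\Caz \equiv \unique\Sigma^1_1\bou\Acz$ --- as the separating witness. You have merely spelled out the details the paper leaves implicit, and every step (including the trivial inclusion $\Pi^1_0 \subset \Sigma^1_1$ for the nonstrict direction) is sound.
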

Thus, we have confirmed everything up to what is written in Figure \ref{figure:Unique choice part of the Reverse Mathematics Zoo: Hyperarithmetic Analysis Area}.

\begin{figure}[hbtp]
	\centering
	\includegraphics[width=12cm,pagebox=cropbox,clip]{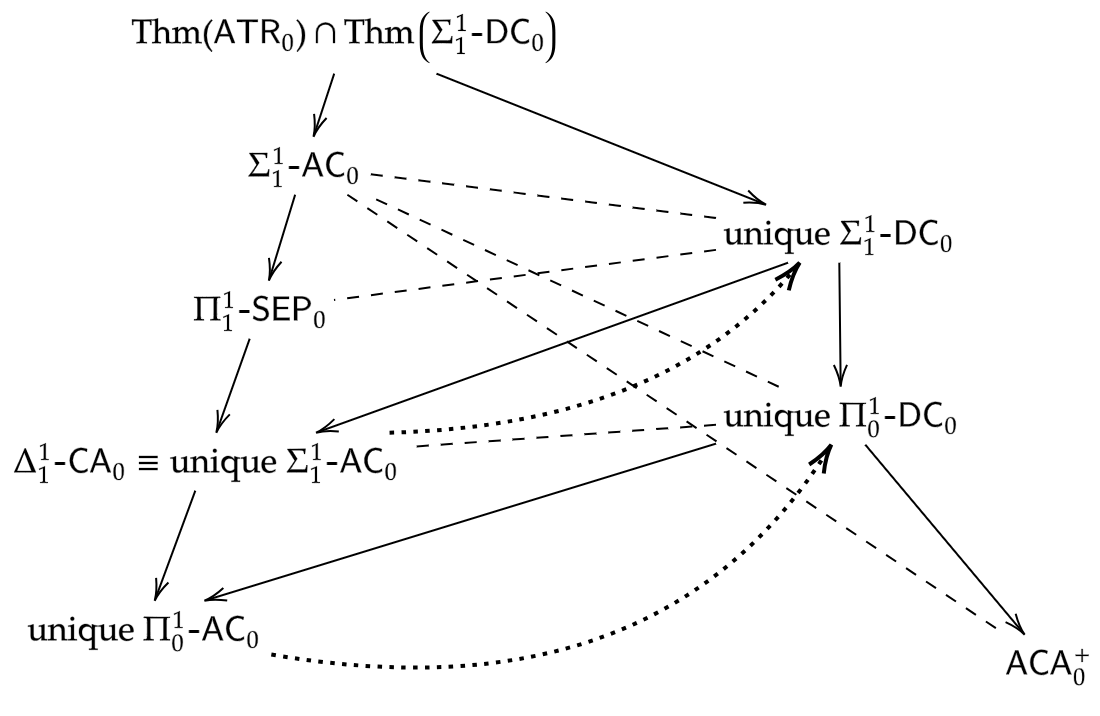}
	\caption{
		Unique choice part of the Reverse Mathematics Zoo: Hyperarithmetic Analysis Area
		}\label{figure:Unique choice part of the Reverse Mathematics Zoo: Hyperarithmetic Analysis Area}
\end{figure}

\newpage

\section{Finite version of the Dependent Choice Axiom}\label{subsection:Finite version of the Dependent Choice Axiom}

Regarding the axioms listed in Definition \ref{definition:選択公理各種の一意版}, it is natural to consider axioms where the phrase ``exactly one'' in the antecedent is replaced with ``a finite number''.

Define ``$\exists \mathrm{~nonzero~finitely~many~} X \psi(X)$'' as an abbreviation for
\[\exists m\exists X \forall Y( \psi(Y) \leftrightarrow \exists n\le m( Y=X_n)),\]
and following Definition \ref{definition:選択公理各種の一意版}, define terms like $\finite\Gamma\bou\mathrm{DC}$.Proposition \ref{proposition:DCの言い換え} also holds for this finite version.

$\finite\Pi^1_0\bou\Acz$ was introduced and its properties investigated in \cite{goh_2023}\footnote{Note that in \cite{goh_2023}, $\finite\Pi^1_0\bou\Acz$ is referred to as $\finite\Sigma^1_1\bou\Acz$.}.In this paper, we use the $\omega$-model constructed by Goh to separate several theories, beginning with an introduction to Goh's results that will be necessary for the subsequent discussions.
\begin{theorem}[\cite{goh_2023} Theorem1.5]
	$	\Rcaz + \Sigma^1_1\bou\Ind + \mathsf{ABW} \vdash \finite\Pi^1_0\bou\Acz$.
	\end{theorem}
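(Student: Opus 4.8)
The plan is to adapt the accumulation-point argument behind Conidis's implication $\mathsf{ABW}\vdash\unique\Pi^1_0\bou\Acz$ (the fourth item of the cited theorem, over $\Rcaz+\Sigma^1_1\bou\Ind$) to the finite setting. Recall the shape of that argument: given an instance with a unique witness $Y_n$ for each $n$, one considers the $\Pi^1_0$ predicate $A(X)$ asserting that $X=\langle X_n\rangle$ is a truncation $\langle Y_0,\dots,Y_{N-1},\varnothing,\dots\rangle$, recognizing each $X_n$ by $\varphi(n,X_n)$ (uniqueness pins $X_n=Y_n$). Its solution set is the infinite family of these truncations, which converges in Cantor space to $Z=\langle Y_n\rangle_n$; since the family is infinite, $\mathsf{ABW}$ cannot return its finiteness disjunct, so it yields an accumulation point, necessarily $Z$, from which $\forall n\,\varphi(n,Z_n)$ is read off.

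First I would record the slotwise consequence of $\mathsf{ABW}$ that is to replace uniqueness. For a fixed $n$, the solution set $S_n=\{Y:\varphi(n,Y)\}$ is finite and nonempty by hypothesis, and a finite set of reals has no accumulation point (provable with a modest amount of induction, since finitely many distinct reals are separated at a common finite level). Hence $\mathsf{ABW}$ applied to $\varphi(n,\cdot)$ must return its \emph{first} disjunct, i.e.\ an explicit complete finite enumeration of $S_n$, and in particular the cardinality $c_n=\card(S_n)$. This is exactly the feature of $\mathsf{ABW}$ that, slotwise, supplies the data that the conclusion of $\finite\Pi^1_0\bou\Acz$ demands.

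The substance of the proof is then to make this slotwise information uniform in $n$. I would fix a canonical (say lexicographically increasing, duplicate-free) enumeration $Z_n$ of each $S_n$, set $Z=\langle Z_n\rangle_n$, and run the truncation/accumulation argument above with the approximations $\langle Z_0,\dots,Z_{N-1},\varnothing,\dots\rangle$. For $\mathsf{ABW}$ to return $Z$ as the accumulation point, the predicate $A$ feeding it must recognize these truncations, which forces me to express ``$Z_n$ is the \emph{complete} enumeration of $S_n$''. The clause ``every listed entry is a witness and the list is lex-increasing'' is $\Pi^1_0$, but \emph{completeness} (``no further witnesses'') is a priori $\Pi^1_1$; this is where the argument is delicate, and I expect it to be the main obstacle. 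Since $\mathsf{ABW}$ is incomparable with $\Delta^1_1\bou\Caz$, I cannot simply comprehend the cardinality function $n\mapsto c_n$ and rewrite completeness as the bounded (hence $\Pi^1_0$) condition ``$Z_n$ has exactly $c_n$ distinct entries''.

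The resolution I would pursue is to interleave the slotwise use of the finiteness disjunct with $\Sigma^1_1$-induction: keeping the bounded conjunctions over $n<N$ arithmetic (cf.\ Lemma~\ref{lemma:Σ1k上Σ1kは有界任意量化に閉じる}), I would build by induction on $N$ the approximation that is complete and correct on all slots $n<N$, and then arrange the coding so that the \emph{only} accumulation point of the resulting $\Pi^1_0$-definable family is the full $Z$, ruling out accumulation points coming from proper sub-enumerations. Verifying that this single application of $\mathsf{ABW}$ is forced onto the complete $Z$, rather than onto some sequence of partial enumerations, is the crux of the whole argument; everything else is bookkeeping about codings of finite sequences and the absoluteness of the arithmetic matrix.
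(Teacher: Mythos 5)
You should note at the outset that the paper offers no proof of this theorem: it is imported verbatim as \cite{goh_2023} Theorem 1.5, so your attempt can only be judged against Goh's argument and on its own merits. Your overall skeleton---a single application of $\mathsf{ABW}$ to a $\Pi^1_0$ family of padded truncations, with $\Sigma^1_1\bou\Ind$ securing that every finite level is inhabited---is indeed the right one. But there is a genuine gap at exactly the point you yourself flag as the crux, and it is not repaired. Your plan requires the accumulation point to be the sequence of canonical \emph{complete} enumerations $Z=\braket{Z_n}_n$, which forces the $\Pi^1_1$ completeness clause into the predicate $A$; your proposed fix (``arrange the coding so that the only accumulation point is the full $Z$'') is never executed and cannot work as stated, because a $\Pi^1_0$ predicate that admits all lex-increasing witness lists necessarily admits proper sub-enumerations as solutions, and excluding accumulation points assembled from them is precisely the $\Pi^1_1$ condition you concede you cannot express. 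Your slotwise use of $\mathsf{ABW}$ is also redundant: by the paper's definition, the hypothesis $\forall n\,\exists\nfm Y\,\varphi(n,Y)$ \emph{already is} the finiteness disjunct, i.e.\ it directly supplies, for each fixed $n$, a complete finite enumeration of $S_n$; the only thing missing is uniformity in $n$, which is the whole problem.

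The observation that dissolves your obstacle is that completeness is never needed, because the conclusion of $\finite\Pi^1_0\bou\Acz$ asks only for a choice sequence. Take $A(X)$ to say that $X$ codes a padded truncation $\braket{Y_0,\dots,Y_{N-1}}$ with $\varphi(i,Y_i)$ for all $i<N$ and with the length revealed only along the prefix (stop markers from slot $N$ on); this is $\Pi^1_0$, with no completeness clause. By Lemma~\ref{lemma:Σ1k上Σ1kは有界任意量化に閉じる} and $\Sigma^1_1\bou\Ind$, every length is realized, so the solution class is infinite and the finiteness disjunct of $\mathsf{ABW}$ fails, yielding an accumulation point $Z$. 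One then verifies $\forall n\,\varphi(n,Z_n)$ \emph{pointwise}: for each fixed $N$, $\Sigma^1_1\bou\Ind$ assembles the per-slot enumerations into an enumeration of the finitely many length-$N$ solutions, which are separated at a common finite level, so $Z$ cannot be a finite truncation; hence $Z$ carries no stop marker, and for each fixed $n$ its slot $n$ agrees on arbitrarily long initial segments with members of the finite set $S_n$ (whose enumeration the hypothesis provides for that $n$), hence equals one of them. The non-uniformity of these per-$n$ enumerations is harmless, since $\forall n\,\varphi(n,Z_n)$ is proved instance by instance. This is essentially Goh's route; your draft stalls one step short of it by insisting on recovering the full enumerations rather than a mere selector.
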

As Goh also mentions immediately following the aforementioned theorem, the following corollary holds from Theorem \ref{Theorem:ConidisがVanWesepのモデルはABWも充足してる定理}.
\begin{corollary}
    There exists an $\omega$-model that satisfies $\finite\Pi^1_0\bou\Acz$ and does not satisfy $\Delta^1_1\bou\Caz$.
\end{corollary}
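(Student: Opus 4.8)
The plan is to take $M_w$ itself as the desired witness. Recall from Theorem~\ref{theorem:M_w} that Van Wesep's $\omega$-model $M_w$ satisfies $\unique\Pi^1_0\bou\Acz$ but fails $\Delta^1_1\bou\Caz$. Since $\unique\Pi^1_0\bou\Acz$ implies $\Acaz$ and hence $\Rcaz$, the half of the claim concerning $\Delta^1_1\bou\Caz$ is already in hand, and it remains only to check that $M_w \models \finite\Pi^1_0\bou\Acz$.

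For this I would invoke Goh's theorem $\Rcaz + \Sigma^1_1\bou\Ind + \mathsf{ABW} \vdash \finite\Pi^1_0\bou\Acz$ (\cite{goh_2023} Theorem~1.5) and verify its three hypotheses in $M_w$. The hypothesis $\Rcaz$ holds as noted above. The hypothesis $\mathsf{ABW}$ is precisely the content of Theorem~\ref{Theorem:ConidisがVanWesepのモデルはABWも充足してる定理}, where Conidis established that Van Wesep's model additionally satisfies $\mathsf{ABW}$. Finally, the hypothesis $\Sigma^1_1\bou\Ind$ comes for free: because $M_w$ is an $\omega$-model its first-order part is the genuine $\omega$, so every instance of the induction scheme is witnessed by ordinary induction on a standard number variable (with the set quantifiers interpreted inside $M_w$); thus any $\omega$-model satisfies full second-order induction, in particular $\Sigma^1_1\bou\Ind$.

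Having checked all three hypotheses, Goh's theorem yields $M_w \models \finite\Pi^1_0\bou\Acz$, and together with $M_w \not\models \Delta^1_1\bou\Caz$ this finishes the argument. There is essentially no obstacle to overcome here: the corollary is a direct combination of the two cited facts about the single model $M_w$, the only point worth recording being that the induction assumption in Goh's theorem is automatic for $\omega$-models and therefore imposes no extra condition.
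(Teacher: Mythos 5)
Your proposal is correct and is essentially the paper's own argument: the corollary is obtained by applying Goh's theorem to Van Wesep's model $M_w$, citing Conidis's result that $M_w \models \mathsf{ABW}$, Van Wesep's result that $M_w \not\models \Delta^1_1\bou\Caz$, and the standard fact (used elsewhere in the paper as well) that every $\omega$-model satisfies the full induction scheme, hence $\Sigma^1_1\bou\Ind$. Your explicit verification of the induction hypothesis is the only point the paper leaves tacit, and your justification of it is sound.
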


\begin{theorem}[\cite{goh_2023} Theorem 1.3]\label{theorem:M_g}
  There exists an $\omega$-model that satisfies $\Delta^1_1\bou\Caz$ but does not satisfy $\finite\Pi^1_0\bou\Acz$.This $\omega$-model is denoted as $M_g$.
\end{theorem}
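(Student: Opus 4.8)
The statement is a separation result, and the plan is to obtain $M_g$ by a forcing construction of the tagged-tree type due to Steel and refined by Montalb\'an, exactly as carried out in \cite{goh_2023}. The target is an $\omega$-model closed under $\Delta^1_1$-comprehension in which one specific instance of $\finite\Pi^1_0\bou\Acz$ fails. First I would isolate the instance to be killed: fix an arithmetic (indeed $\Pi^1_0$) formula $\varphi(n,Y)$ arranged so that, for each $n$, the solution class $\{Y : \varphi(n,Y)\}$ is forced to be nonempty and finite (say, the set of infinite paths through a uniformly presented finitely branching tree $T_n$ having more than one but finitely many paths), while any single set $Z=\langle Z_n\rangle_{n}$ with $\forall n\,\varphi(n,Z_n)$ would have to encode a uniform selection across all $n$. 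The design goal is that each finite solution class is captured inside the model, whereas no uniform choice is.

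Second, I would fix the forcing notion $\mathbb{P}$ whose conditions are finite trees carrying tags recording which approximating branches are currently ``active'', together with the attendant genericity, and define the generic object $G$ and the model $M_g$ as the sets $\Delta^1_1$ in $G$ (equivalently, the symmetric hyperarithmetic closure dictated by the forcing). Two things must then be verified. For $M_g\models\Delta^1_1\bou\Caz$, the key is a forcing-complexity lemma showing that $\Sigma^1_1$ and $\Pi^1_1$ formulas with parameters from $M_g$ are decided by conditions in a sufficiently absolute manner, so that whenever a set is simultaneously $\Sigma^1_1$ and $\Pi^1_1$ over $M_g$ it is already captured by the closure and lies in $M_g$; the tagging is precisely the device that keeps the $\Sigma^1_1$-theory of $G$ thin enough for this to go through. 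For $M_g\not\models\finite\Pi^1_0\bou\Acz$, I would run a density argument: for every candidate choice sequence the conditions forcing it to fail as a selection for $\varphi$ are dense, so the generic diagonalizes against all uniform selections, while the antecedent ``for each $n$ there are nonzero finitely many $Y$ with $\varphi(n,Y)$'' remains true in $M_g$ because each individual finite solution set is present.

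The main obstacle is the simultaneous balancing in the second step: $\Delta^1_1\bou\Caz$ is a strong closure condition (it sits above $\JI_0$ and below $\Sigma^1_1\bou\Acz$ in the hierarchy of this excerpt), so the hyperarithmetic closure that secures comprehension threatens to reconstruct the very choice sequence we are trying to exclude. The whole force of the tagged-tree machinery is to thread this needle---making the generic rich enough that the level-by-level finite solution sets and all $\Delta^1_1$-definable sets land in $M_g$, yet sufficiently spread out that no single set uniformly picks a branch at every level. Establishing the preservation-of-comprehension lemma together with the diagonalization, and checking their compatibility against a common family of dense sets, is where essentially all the work lies; the remaining verifications (that $M_g$ is an $\omega$-model, and that the soundness and consistency bookkeeping recalled in the preliminaries applies) are routine.
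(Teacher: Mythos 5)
Your proposal takes essentially the same route as the paper, which does not prove this statement itself but cites it as Theorem~1.3 of \cite{goh_2023}, where the separation is indeed obtained by a Steel--Montalb\'an-style tagged-tree forcing: an $\omega$-model closed under $\Delta^1_1$-comprehension in which the finite-choice antecedent holds level by level (finitely branching trees with finitely many paths) while a density argument diagonalizes against every uniform selector. Your sketch correctly identifies both the instance to be killed and the two verification tasks (the forcing-complexity lemma for $\Delta^1_1\bou\Caz$ and the diagonalization against choice sequences), so it matches the cited construction in structure and intent.
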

From Goh's results, it is particularly evident that $\finite\Pi^1_0\bou\Acz$ and $\Delta^1_1\bou\Caz$ are incomparable.



Similarly to the unique versions, $\finite\Gamma\bou\Dcz$ implies $\finite\Gamma\bou\Acz$, and the converse is also true under appropriate induction.
By rephrasing ``finite'', the complexity of the necessary induction axiom can be lowered.
Now, we use ``$\exists $ nonzero \textit{subfinitely} many $X \varphi(X)$'' as an abbreviation for the following weakened statement:
\[\exists m\exists X \forall Y( \varphi(Y) \rightarrow \exists n\le m( Y=X_n)).\]
\begin{proposition}\label{proposition:subfiniteとfintieの帰納法上での同値性}
	When $\theta$ is a $\Pi^1_0$ formula and $\varphi$ is a $\Sigma^1_{k+1}$ formula, the following is true:
\[
\Rcaz + \Pi^1_{0}\bou\Ind \vdash \exists \nfm X \theta(X) \leftrightarrow \exists \nsfm X \theta(X)
\]
\[
\Rcaz + \Sigma^1_{k+2}\bou\Ind \vdash \exists \nfm X \varphi(X) \leftrightarrow \exists \nsfm X \varphi(X)
\]
In particular, on $\Rcaz + \Sigma^1_{k+2}\bou\Ind$, ``$\exists \nfm X \varphi(X)$'' is $\Sigma^1_{k+2}$.
	\end{proposition}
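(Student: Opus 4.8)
The plan is to prove each biconditional by splitting into its two directions; the left-to-right implication is immediate for every $\psi$, and all the work (and the only use of induction) lies in the converse. Indeed, if $m, X$ witness $\exists\nfm X\,\psi(X)$, then the biconditional $\psi(Y)\leftrightarrow\exists n\le m(Y=X_n)$ in particular gives its forward half $\psi(Y)\to\exists n\le m(Y=X_n)$, which is exactly the defining matrix of $\exists\nsfm X\,\psi(X)$; so the same $m,X$ witness the subfinite version, with no induction required.

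For the converse I would fix witnesses $m$ and $X=\langle X_n\rangle_n$ with $\forall Y(\psi(Y)\to\exists n\le m(Y=X_n))$, so that every solution of $\psi$ occurs among $X_0,\dots,X_m$; reading ``nonzero'' as asserting at least one solution, I also fix an index $n_0\le m$ with $\psi(X_{n_0})$. The goal is a list enumerating \emph{exactly} the solutions. The obstacle is that one cannot simply apply comprehension to $\{\,n\le m : \psi(X_n)\,\}$, since $\psi(X_n)$ is not $\Delta^0_1$ and $\Rcaz$ has neither arithmetic nor $\Sigma^1_{k+1}$ comprehension. Instead I would build the finite \emph{decision set} $D:=\{\,n\le m : \psi(X_n)\,\}$ by induction on $j\le m$, forming $D_j:=\{\,n\le j : \psi(X_n)\,\}$ step by step: at $j=0$ decide $\psi(X_0)$ and take $\{0\}$ or $\emptyset$; at the inductive step adjoin $j+1$ to $D_j$ iff $\psi(X_{j+1})$. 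Once $D=D_m$ is available as a coded finite set, $D$-relative $\Delta^0_1$ comprehension defines $X'_n:=X_n$ when $n\in D$ and $X'_n:=X_{n_0}$ otherwise; then each $X'_n$ is a solution and every solution equals some $X'_n$, so $m,X'$ witness $\exists\nfm X\,\psi(X)$.

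The required induction strength is dictated precisely by the complexity of the statement ``$D_j$ exists''. When $\psi=\theta\in\Pi^1_0$ this statement is arithmetic, so $\Pi^1_0\bou\Ind$ (arithmetic induction) suffices. When $\psi=\varphi\in\Sigma^1_{k+1}$, the relevant matrix $\forall n\le j((n\in d)\leftrightarrow\varphi(X_n))$ is a bounded universal number quantifier over the Boolean combination $\Sigma^1_{k+1}\wedge\Pi^1_{k+1}$; here Lemma \ref{lemma:Σ1k上Σ1kは有界任意量化に閉じる} collapses the bounded quantifier over the $\Sigma^1_{k+1}$ half while the $\Pi^1_{k+1}$ half stays $\Pi^1_{k+1}$, and prefixing the number quantifier $\exists d$ then yields a $\Sigma^1_{k+2}$ statement, so that $\Sigma^1_{k+2}\bou\Ind$ applies. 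The base and successor steps of the induction are provable in the base theory by a simple case split on $\psi(X_j)$.

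Finally, the ``in particular'' clause follows by a direct complexity count combined with the equivalence just proved: the defining matrix of $\exists\nsfm X\,\varphi(X)$, namely $\forall Y(\lnot\varphi(Y)\lor\exists n\le m(Y=X_n))$, is $\Pi^1_{k+1}$, so prefixing $\exists m\,\exists X$ shows $\exists\nsfm X\,\varphi(X)$ is $\Sigma^1_{k+2}$; since it is provably equivalent to $\exists\nfm X\,\varphi(X)$ over $\Rcaz+\Sigma^1_{k+2}\bou\Ind$, the finite version is $\Sigma^1_{k+2}$ as well. I expect the main obstacle to be exactly this complexity bookkeeping in the $\Sigma^1_{k+1}$ case — pinning ``$D_j$ exists'' down to $\Sigma^1_{k+2}$ via the bounded-quantifier lemma — together with the point that the decision set must be constructed inductively rather than carved out by comprehension.
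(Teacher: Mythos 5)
Your proof is correct and takes essentially the same approach as the paper: your inductively constructed decision set $D_j$ is exactly the paper's characteristic string $\sigma\in 2^n$ with $\sigma(i)=1\leftrightarrow\varphi(X_i)$, built by the same induction whose statement is pinned to $\Sigma^1_{k+2}$ via Lemma \ref{lemma:Σ1k上Σ1kは有界任意量化に閉じる}, after which the list is trimmed (you replace non-solutions by a fixed solution $X_{n_0}$ where the paper removes them). Your explicit handling of the ``nonzero'' clause and of the $\Pi^1_0$ case under arithmetic induction only spells out details the paper leaves implicit.
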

	\begin{proof}
	Assume $m$ and $X=\braket{X_n}_{n\le m}$ satisfy $\forall Y( \varphi(Y)\rightarrow \exists n\le m (Y=X_n))$.From Lemma \ref{lemma:Σ1k上Σ1kは有界任意量化に閉じる}, the following can be shown through induction over $n$:
	\[
	\forall n\le m \exists \sigma\in2^n\forall i\le n (\sigma(i)=1 \leftrightarrow \varphi(X_i) ).
	\]
	Using the $\sigma\in2^m$ obtained here, it is possible to remove elements from $\braket{X_n}_{n\le m}$ that do not constitute solutions to $\varphi$.	
	\end{proof}
\begin{theorem}\label{theorem:finiteΣ1kAC+Σ1k+2INDprovefiniteΣ1kDC}
		The following is true for every $k \in \omega$:
		\[\finite\Pi^1_0\bou\Acz + \Sigma^1_{2}\bou\Ind \vdash \finite\Pi^1_0\bou\Dcz ,\]
		\[\finite\Sigma^1_{k+1}\bou\Acz + \Sigma^1_{k+2}\bou\Ind \vdash \finite\Sigma^1_{k+1}\bou\Dcz .\]
\end{theorem}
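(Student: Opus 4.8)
The plan is to follow the pattern of Lemma~\ref{lemma:uniqueArithAC+Σ11INDproveuniqueArithDC}, replacing the unique choice function by a finite one and then repairing the coherence that the uniqueness hypothesis previously gave for free. By Proposition~\ref{proposition:DCの言い換え} (which holds for the finite version) it suffices to prove the third, parametrized form, so I would assume $\forall n\,\forall X\,\exists\nfm Y\,\psi(n,X,Y)$ with $\psi\in\Gamma$, fix an arbitrary $A$, and set
\[ \Psi(n,Z) :\leftrightarrow Z_0=A \land \forall i<n\,\psi(i,Z_i,Z_{i+1}) \land \forall i>n\,(Z_i=\varnothing), \]
so that $\Psi(n,Z)$ asserts that $Z$ codes a $\psi$-valid sequence of length $n+1$ issuing from $A$. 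By Lemma~\ref{lemma:Σ1k上Σ1kは有界任意量化に閉じる} the bounded conjunction leaves $\Psi$ inside $\Gamma$ (this is immediate when $\Gamma=\Pi^1_0$ and uses $\Sigma^1_{k+1}\bou\Ind$ when $\Gamma=\Sigma^1_{k+1}$). In the unique setting the witnesses of $\Psi(n,\cdot)$ are automatically initial segments of one another and can simply be glued; the entire difficulty in the finite setting is that they need not be.

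Next I would verify the hypothesis required to invoke $\finite\Gamma\bou\Acz$, namely $\forall n\,\exists\nfm Z\,\Psi(n,Z)$, by induction on $n$. The base case is trivial, and in the step each of the finitely many valid sequences of length $n+1$ has, by the standing hypothesis applied at its last coordinate, nonzero finitely many one-step prolongations, so the valid sequences of length $n+2$ form a nonempty finite union of finite sets. By Proposition~\ref{proposition:subfiniteとfintieの帰納法上での同値性} the predicate $\exists\nfm Z\,\Psi(n,Z)$ is $\Sigma^1_2$ when $\Gamma=\Pi^1_0$ and $\Sigma^1_{k+2}$ when $\Gamma=\Sigma^1_{k+1}$; this is exactly what forces the induction to be $\Sigma^1_2\bou\Ind$, resp.\ $\Sigma^1_{k+2}\bou\Ind$ --- one level above the unique case, the extra level being the cost of counting witnesses --- and it also furnishes the induction needed by Lemma~\ref{lemma:Σ1k上Σ1kは有界任意量化に閉じる} and Proposition~\ref{proposition:subfiniteとfintieの帰納法上での同値性}. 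Applying $\finite\Gamma\bou\Acz$ to the family $\langle\Psi(n,\cdot)\rangle_n$ then yields a single $W=\langle W_n\rangle_n$ with $\Psi(n,W_n)$ for every $n$; each $W_n$ is a valid sequence of length $n+1$, but the family $W$ is a priori incoherent.

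The decisive step, and the one I expect to be the real obstacle, is to extract a single coherent infinite path from the incoherent family $W$. I would form the tree $T^{*}$ of all finite sequences occurring as an initial segment of some $W_n$. Membership in $T^{*}$ mentions only $W$ and equality of sets, so $T^{*}$ is arithmetical in $W$ and exists by $\Acaz$ (supplied by $\finite\Gamma\bou\Acz$); this is what lets the argument go through uniformly, since for $\Gamma=\Sigma^1_{k+1}$ one cannot form the tree of \emph{all} valid sequences by comprehension. The tree $T^{*}$ is closed under initial segments, has a node at each level (e.g.\ $W_j$ itself), and at level $j$ has only finitely many distinct nodes, as these lie among the finitely many valid sequences of length $j+1$; moreover the count of level-$j$ nodes is computable from $W$, so after reindexing $T^{*}$ is a bounded, finitely branching, infinite tree. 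K\"onig's lemma (available over $\Acaz$) then produces an infinite path, which decodes to $Y=\langle Y_n\rangle$ with $Y_0=A$ and $\psi(n,Y_n,Y_{n+1})$ for all $n$ --- precisely the conclusion of the third form in Proposition~\ref{proposition:DCの言い換え}. The two displayed implications follow by taking $\Gamma=\Pi^1_0$ and $\Gamma=\Sigma^1_{k+1}$, and the only delicate points are the complexity bookkeeping above and the verification that the prefix tree is bounded and finitely branching.
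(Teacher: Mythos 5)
Your proposal is correct and follows essentially the same route as the paper: reduce to the parametrized form, verify $\forall n\,\exists\nfm Z\,\Psi(n,Z)$ by $\Sigma^1_{k+2}$-induction (using Lemma \ref{lemma:Σ1k上Σ1kは有界任意量化に閉じる} and Proposition \ref{proposition:subfiniteとfintieの帰納法上での同値性}), apply $\finite\Gamma\bou\Acz$ to get an incoherent family, and then recover coherence via an arithmetically definable finitely branching tree plus K\"onig's lemma in $\Acaz$. Your direct prefix tree $T^{*}$ (indexed, say, by least realizing indices so that it sits inside $\N^{<\N}$) is just a cleaner packaging of the paper's explicitly constructed indexing tree $\braket{\sigma_n}, \braket{T_n}$, and your one loose phrase --- that the level-$j$ node count is ``computable from $W$'' --- is harmless, since finite branching (each node's children inject into the finite enumeration of valid length-$(j+1)$ sequences) is all that K\"onig's lemma requires over $\Acaz$.
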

\begin{proof}
Assume $\varphi \in \Sigma^1_{k+1}$ and that $\forall n \forall X \exists \mathrm{~nonzero~finitely~many~} Y \varphi(n, X, Y)$, and let $A$ be arbitrary.Define the following formula as $\psi(n, Z)$:
\[Z_0 = A \land \forall i < n \varphi(i, Z_i, Z_{i+1}) \land \forall i > n (Z_i = \varnothing).\]
From Lemma \ref{lemma:Σ1k上Σ1kは有界任意量化に閉じる}, $\psi$ is in $\Sigma^1_{k+1}$.Therefore, using $\Sigma^1_{k+2}\bou\Ind$ and the resulting $\bounded\Pi^1_{k+1}\bou\Acz$ from Proposition \ref{proposition:subfiniteとfintieの帰納法上での同値性}, it can be verified that $\forall n \exists \mathrm{~nonzero~finitely~many~} Z \psi(n, Z)$.

From the above, we can take $\braket{Z_n}_{n\in\N}$ such that $\forall n \psi(n,Z_n)$ is satisfied.Although each $Z_n$ is a finite sequence of sets of length $n+1$, it is not necessarily true that $Z_n \subset Z_{n+1}$.To construct the desired infinite sequence from such $\braket{Z_n}_{n\in\N}$, we define an indexing tree $T \subset \N^{<\N}$, which can be used to construct an infinite sequence from an infinite path.
  
Here is how we recursively define the sequence of finite sequences $\braket{\sigma_n}_{n\in\N}$ and the sequence of finite trees $\braket{T_n}_{n\in\N}$:
\begin{itemize}
  \item Define $\sigma_0 = \braket{}$, $\sigma_1 = \braket{1}$,
  \item Define $T_0 = \braket{}$, and $T_1 = T_0 \cup \set{ \tau | \tau \subset \sigma_1} = \{\sigma_1\}$.
\end{itemize}
Assume that $\sigma_n$ and $T_n$ have been defined up to $n$.

	In the notation used within this proof, when we define $ Z_n = \braket{A_0^n, A_1^n, A_2^n, \dots, A_{n-1}^n, A_n^n},$
	the sequence truncated to the first $k+1$ elements, $\braket{A_0^n, A_1^n, A_2^n, \dots, A_{k-1}^n, A_k^n}$, is denoted by $Z_n\restriction k$.
  
	Define $\sigma_{n+1}$ using the following algorithm, and define $T_{n+1} = T_n \cup \set{ \tau | \tau \subset \sigma_{n+1}}$.
	\begin{enumerate}
    \item[$0.$] Search for a path of length $n$ in $T_n$: 
    \begin{itemize}
        \item If $Z_{n+1}\restriction n = Z_n$, then set $\sigma_{n+1} = {\sigma_n}^\frown \braket{n+1}$.
        \item Otherwise, continue below.
    \end{itemize}

    \item[$1.$] Search for a path of length $n-1$ in $T_n$:
    \begin{itemize}
        \item[$1\bou 0.$] If $Z_{n+1}\restriction n-1 = Z_n \restriction n-1$, then set $\sigma_{n+1} = {\sigma_n[n-1]}^\frown \braket{n+1, n+1}$.
        \item[$1\bou 1.$] If $Z_{n+1} \restriction n-1 = Z_{n-1}$, then set $\sigma_{n+1} = {\sigma_{n-1}}^\frown \braket{n+1, n+1}$.
        \item Otherwise, continue below.
    \end{itemize}

    \item[$2.$] Search for a path of length $n-2$ in $T_n$:
    \begin{itemize}
        \item[$2\bou 0.$] If $Z_{n+1}\restriction n-2 = Z_n \restriction n-2$, then set $\sigma_{n+1} = {\sigma_n[n-2]}^\frown \braket{n+1, n+1, n+1}$.
        \item[$2\bou 1.$] If $Z_{n+1} \restriction n-2 = Z_{n-1} \restriction n-2$, then set $\sigma_{n+1} = {\sigma_{n-1}[n-2]}^\frown \braket{n+1, n+1, n+1}$.
        \item[$2\bou 2.$] If $Z_{n+1} \restriction n-2 = Z_{n-2}$, then set $\sigma_{n+1} = {\sigma_{n-2}}^\frown \braket{n+1, n+1, n+1}$.
        \item Otherwise, continue below.
    \end{itemize}

    \item[$3.$] Search for a path of length $n-3$ in $T_n$:
    \[ \vdots \]

    \item[$n-1.$] Search for a path of length $1$ in $T_n$:
    \begin{itemize}
        \item[$n-1 \bou 0.$] If $Z_{n+1}\restriction 1 = Z_n \restriction 1$, then set $\sigma_{n+1} = {\sigma_n[1]}^\frown \braket{n+1, ..., n+1}$.
        \item[$n-1 \bou 1.$] If $Z_{n+1} \restriction 1 = Z_{n-1} \restriction 1$, then set $\sigma_{n+1} = {\sigma_{n-1}[1]}^\frown \braket{n+1, ..., n+1}$.
        \item[$n-1 \bou 2.$] If $Z_{n+1} \restriction 1 = Z_{n-2} \restriction 1$, then set $\sigma_{n+1} = {\sigma_{n-2}[1]}^\frown \braket{n+1, ..., n+1}$.
        \[ \vdots \]
        \item[$n-1 \bou n-1.$] If $Z_{n+1} \restriction 1 = Z_1$, then set $\sigma_{n+1} = {\sigma_1}^\frown \braket{n+1, ..., n+1}$.
        \item Otherwise, continue below.
    \end{itemize}

    \item[$n.$] Case where no prefix of paths in $T_n$ matches:
    \[ \sigma_{n+1}  = \underbrace{\braket{1, ..., n+1}}_{n+1 items} \]
\end{enumerate}

	It is clear that both $\braket{\sigma_n}_{n\in\N}$ and $\braket{T_n}_{n\in\N}$ can be defined arithmetically.
Define $T := \bigcup_{n \in \N} T_n$.
Since $\forall n \forall X \exists \mathrm{~nonzero~finitely~many~} Y \varphi(n, X, Y)$, $T$ is a finitely branching infinite tree.
Therefore, by König's lemma, an infinite path $f$ can be obtained.

	Finally, define the sequence $\braket{W_n}_{n\in\N}$ where $W_n = (Z_{f(n)})_n = A^{f(n)}_n$.This sequence satisfies $\forall n \varphi(W_n, W_{n+1})$.

\end{proof}


	In summary, the satisfaction relationships of each hyperarithmetic analysis theory in the models $M_w$(cf.Theorem \ref{Theorem:ConidisがVanWesepのモデルはABWも充足してる定理}) and $M_g$(cf.Theorem \ref{theorem:M_g}) are organized as shown in the following table.
	\begin{table}[h]
    \caption{
			Satisfaction relations for some hyperarithmetic analysis theories with models $M_w$ and $M_g$.
			}
    \label{table:MwMgtable}
		\begin{center}
	\begin{tabular}{|c|c|c|} \hline
		 & $M_w  $ &  $M_g $  \\ \hline
		$\unique \Pi^1_0\bou\Acz $  & Yes & Yes\\
		$\unique \Pi^1_0\bou\Dcz $  & Yes &  Yes \\\hline
		$\finite \Pi^1_0\bou\Acz $ &  Yes & No\\
		$\finite \Pi^1_0\bou\Dcz $ &  Yes & No\\ \hline
		$\Delta^1_1\bou\Caz $ & No & Yes\\ 
		$\unique \Sigma^1_1\bou\Dcz $  &No  & Yes\\ \hline
		$\finite \Sigma^1_1\bou\Acz $ & No & No\\
		$\finite \Sigma^1_1\bou\Dcz $ & No & No\\ \hline
		$\Sigma^1_1\bou\Acz $ & No & No \\ \hline
		\end{tabular}
		\end{center}
	\end{table}
	In the table, ``Yes'' indicates that the $\omega$-model satisfies the axiom listed to the left, while ``No'' indicates the opposite.By synthesizing the information summarized in this table and the fact that the $\mathsf{DC}$ series implies $\Acaz^+$, whereas the $\mathsf{AC}$ series does not, we can draw the diagram introduced in the introduction as Figure \ref{figure:Reverse Mathematics Zoo: Hyperarithmetic Analysis Area}.
\newpage

\section{\texorpdfstring{Approximation of Hyperarithmetic Analysis by the $\omega$-model reflection}{Approximation of Hyperarithmetic Analysis by the omega-model reflection}}
In this section, we move beyond examining relationships within individual theories as discussed in sections \ref{subsection:従属選択公理の一意版} and \ref{subsection:Finite version of the Dependent Choice Axiom}, to delve into the structural properties of hyperarithmetic analysis.The entirety of these theories is represented as $\mathcal{HA}$.
Furthermore, we establish a preorder within this set, defined by implication relations, and represent it as $(\mathcal{HA}, \le)$.

Now, let us introduce three well-known results about the structure of $(\mathcal{HA}, \le)$.
\begin{theorem}[\cite{VanWesep1977} 2.2.2]
For any $T \in \mathcal{HA}$, there exists a $T$-computable $T' \in \mathcal{HA}$ that is strictly weaker than $T$ and has strictly more $\omega$-models.
\end{theorem}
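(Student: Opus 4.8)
The plan is to recast the statement as a model‑existence problem, solve that, and then read off an effectively presented theory. Recall that $S\in\mathcal{HA}$ precisely when (a) $\HYP(X)\models S$ for every $X\subset\omega$, and (b) every $\omega$-model of $S$ is closed under computable join and hyperarithmetic reduction. The crucial asymmetry is that (a) is monotone downward in logical strength: if $T\vdash T'$, then each $\HYP(X)$, being a model of $T$, is automatically a model of $T'$, so (a) is inherited by $T'$ for free. All the weight therefore falls on (b) together with strictness. In these terms I want a $T$-computable $T'$ with $T\vdash T'$ such that (i) $T'$ still forces hyperarithmetic closure on every one of its $\omega$-models, and (ii) there is a single hyperarithmetically closed $\omega$-model $N$ with $N\models T'$ but $N\not\models T$. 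Item (ii) instantly supplies everything non-trivial: $N$ is a new $\omega$-model, so $T'$ has strictly more $\omega$-models, and by soundness $T'\not\vdash\sigma_0$ for whichever $\sigma_0\in T$ fails in $N$, so $T'$ is strictly weaker.

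For the construction I would locate an \emph{inessential} fragment of $T$ — content that $T$ proves but that is not needed to force hyperarithmetic closure — and excise it in a way that is uniform (hence computable) in $T$. A natural candidate is $T':=\{\sigma : T\vdash\sigma \text{ and } N\models\sigma\}$ for a suitably chosen hyperarithmetically closed coded $\omega$-model $N$: then $T\vdash T'$ and $\HYP(X)\models T'$ are immediate, $N\models T'$ by construction, and $T'$ retains every closure‑forcing consequence of $T$ provided those consequences hold in the hyperarithmetically closed $N$, which is exactly what secures (i). The separating models already in the excerpt show this regime is inhabited: $M_w$ and $M_g$ are hyperarithmetically closed $\omega$-models separating the standard theories, and $\omega$-model incompleteness (\cite{simpson_2009}, Theorem VIII.5.6) produces, for reasonable $T$, a hyperarithmetically closed $\omega$-model omitting a prescribed strong consequence of $T$. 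Keeping the membership test defining $T'$ computable from $T$ is arranged by drawing $N$ from such a uniform construction, chosen low enough that the relevant part of its $\Lt$-theory reduces to $T$, rather than from an arbitrary model.

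The genuinely hard instance is when $T$ sits at the very bottom of $(\mathcal{HA},\le)$, for example $T=\JI_0$: here (ii) demands a hyperarithmetically closed $\omega$-model $N$ that nonetheless fails $\JI$, which can only happen through nonstandard notations, i.e. $N$ must regard some $a\in\ko^X$ that is really ill‑founded (the witnessing descending sequence being absent from $N$) as having all lower jumps present while lacking $\rH^X_a$ itself. Producing such a pseudohierarchy‑type $N$ is where tagged‑tree forcing or pseudohierarchy arguments enter, and it is the main obstacle. The delicate tension lives entirely inside this one construction: $N$ must drop enough of $T$ to be a new model, yet retain every closure‑forcing sentence, and simultaneously the resulting $T'$ must admit \emph{no} $\omega$-model that fails hyperarithmetic closure, since any such stray model would eject $T'$ from $\mathcal{HA}$. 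Thus the technical heart is verifying (b) for the retained fragment while checking that the excised $\sigma_0$ genuinely fails in $N$; once $N$ and $T'$ are in hand, the remaining points — (a), $T\vdash T'$, $T$-computability, and the two strictness claims — are routine.
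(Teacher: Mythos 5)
Note first that the paper does not prove this statement at all---it is imported verbatim from \cite{VanWesep1977}---so your proposal can only be judged on its own terms, and on those terms it has genuine gaps rather than a complete argument. Your reduction of the problem to finding a hyperarithmetically closed $\omega$-model $N$ with $N \models T'$ and $N \not\models T$ is a sensible skeleton, but the concrete construction $T' := \{\sigma : T \vdash \sigma \text{ and } N \models \sigma\}$ fails the computability requirement, which is part of the theorem's statement. Deciding membership in $T'$ requires evaluating $N \models \sigma$ for arbitrary theorems $\sigma$ of $T$, and the full theory $\Th(N)$ of \emph{any} $\omega$-model contains all true first-order arithmetic sentences, hence is not computable in a recursively axiomatized $T$ (nor even c.e.); no choice of $N$ is ``low enough'' to evade this, since the filter must consult $N$'s second-order theory, not some fragment. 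Your appeal to $\omega$-model incompleteness is also off-target: \cite{simpson_2009} Theorem VIII.5.6 produces an $\omega$-model of $T + \lnot\Rfn(T)$---still a model \emph{of} $T$, i.e., it moves ``upward'' to fewer models, exactly as in the paper's Lemma on $\Rfn(T)\leftrightarrow\Rfn(T+\lnot\Rfn(T))$---so it cannot supply a model that \emph{fails} a consequence of $T$, which is what you need.

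The verification of condition (b) is also broken, not merely unfinished. With $T' = \mathrm{Thm}(T) \cap \Th(N)$, note that $\sigma \lor \rho \in T'$ for every $T$-theorem $\sigma$ and every $\rho \in \Th(N)$; hence any $\omega$-model of $T'$ that fails even one theorem of $T$ must satisfy all of $\Th(N)$. So the $\omega$-models of $T'$ are exactly the $\omega$-models of $T$ together with the $\omega$-models of the complete theory $\Th(N)$, and (b) becomes the claim that \emph{every $\omega$-model elementarily equivalent to $N$} is hyperarithmetically closed---a property that does not follow from $N$ itself being closed, and which you never impose or check; there is no fixed stock of ``closure-forcing consequences'' to retain, since closure is a semantic property of the whole model class. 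The one evident certificate, an axiom like $\JI$ surviving into $\Th(N)$ (every $\omega$-model of $\JI_0$ is closed, as the paper shows), is unavailable precisely in the case you yourself identify as critical: for $T$ at the bottom, e.g.\ $T = \JI_0$, your $N$ must be hyperarithmetically closed yet fail $\JI$, so elementary equivalence to $N$ cannot be certified to force closure this way. And it is exactly there that you defer the construction of $N$ to ``tagged-tree forcing or pseudohierarchy arguments'' without carrying it out. Since producing such an $N$ uniformly in $T$, keeping $T'$ computable from $T$, and verifying (b) for the resulting theory together constitute essentially the whole content of Van Wesep's theorem, the proposal reduces the statement to its hardest unproven instance rather than proving it.
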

This theorem particularly reveals that $(\mathcal{HA}, \le)$ contains infinite descending chains.
\begin{proposition}
	$(\mathcal{HA}, \le)$ has a trivial greatest element $\bigcap_{X \subset \omega} \mathrm{Th}(\HYP(X))$, where $\mathrm{Th}(\HYP(X))$ represents all $\Lt$ sentences that are true in the $\omega$-model $\HYP(X)$.
\end{proposition}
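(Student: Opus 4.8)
The plan is to put $T^\ast := \bigcap_{X \subseteq \omega}\mathrm{Th}(\HYP(X))$ and to verify two separate things: that $T^\ast$ is itself a theory of hyperarithmetic analysis (so that it is an element of $(\mathcal{HA},\le)$ at all), and that it lies above every member of $\mathcal{HA}$ in the implication preorder. Both amount to unwinding the two defining clauses of ``theory of hyperarithmetic analysis'', so there is essentially no computation to grind through.

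First I would check the two defining clauses for $T^\ast$. The first clause, that $\HYP(X)\models T^\ast$ for every $X$, is immediate: by construction $T^\ast \subseteq \mathrm{Th}(\HYP(X))$ for each fixed $X$, so every sentence of $T^\ast$ is true in $\HYP(X)$. For the second clause I would observe that $\JI_0 \subseteq T^\ast$. Indeed, since $\JI_0$ was shown earlier to be a theory of hyperarithmetic analysis, its first clause gives $\HYP(X)\models \JI_0$ for every $X$, so each axiom of $\JI_0$ lies in $\mathrm{Th}(\HYP(X))$ for all $X$ and hence in the intersection $T^\ast$. Consequently any $\omega$-model $M\models T^\ast$ is in particular a model of $\JI_0$, and the earlier proposition ($\JI_0\in\mathcal{HA}$) then guarantees that $M$ is closed under computable union and hyperarithmetic reduction. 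This establishes $T^\ast \in \mathcal{HA}$.

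Next I would prove maximality. Let $T\in\mathcal{HA}$ be arbitrary. Applying the first clause of the definition to $T$ gives $\HYP(X)\models T$ for every $X$, i.e.\ $T\subseteq \mathrm{Th}(\HYP(X))$ for every $X$, whence $T\subseteq \bigcap_{X}\mathrm{Th}(\HYP(X)) = T^\ast$. Since every axiom of $T$ already belongs to $T^\ast$, we have $T^\ast\vdash T$, that is $T\le T^\ast$. As $T$ was arbitrary, $T^\ast$ dominates all of $\mathcal{HA}$, and being itself a member of $\mathcal{HA}$ it is the greatest element.

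The only point needing genuine care is the second clause of membership in $\mathcal{HA}$: closure of the $\omega$-models of $T^\ast$ is \emph{not} automatic for an arbitrary intersection of theories, and the argument relies on the fact that $T^\ast$ contains some fixed theory of hyperarithmetic analysis (I would use $\JI_0$, the weakest one) whose $\omega$-models are already known to be closed under the required operations. I expect this to be the main—though still routine—obstacle. The adjective \emph{trivial} in the statement simply reflects that this greatest element is defined purely semantically, admits no natural axiomatization, and is the degenerate top of $(\mathcal{HA},\le)$ rather than an informative theory.
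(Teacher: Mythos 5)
Your proof is correct, and since the paper states this proposition without any proof (hence the adjective \emph{trivial}), there is nothing to diverge from: your argument --- every $T \in \mathcal{HA}$ satisfies $T \subseteq \mathrm{Th}(\HYP(X))$ for all $X$ by the first defining clause, hence $T \subseteq T^\ast := \bigcap_{X} \mathrm{Th}(\HYP(X))$ and $T^\ast \vdash T$, while $T^\ast$ itself belongs to $\mathcal{HA}$ because it trivially holds in every $\HYP(X)$ and contains a fixed member such as $\JI_0$, whose $\omega$-models the paper has already shown to be closed under computable union and hyperarithmetic reduction --- is exactly the routine unwinding the author leaves implicit. Your flagged point of care is also the right one: closure of $\omega$-models under the required operations is not automatic for an arbitrary intersection of theories, and inheriting it from a concrete theory of hyperarithmetic analysis contained in $T^\ast$ is precisely what makes the second clause go through.
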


\begin{definition}\label{definition:Γ-RFN}
Assume $\varphi$ is a formula with no free variables other than $X_1, ..., X_n$.We denote the following as $\Rfn_\varphi$:
\begin{align*}
    &\forall X_1, ..., X_n (\varphi(X_1, ..., X_n) \\
    &\rightarrow \exists ~\codedom M~s.t.~ (X_1, ..., X_n \in M \land M \models \varphi(X_1, ..., X_n) + \Acaz)).
\end{align*}
Then, define $\Gamma\bou\Rfn := \set{\Rfn_\varphi | \varphi \in \Gamma}$, and as an example, set $\Gamma\bou\Rfn_0 := \Gamma\bou\Rfn + \Acaz$.
\end{definition}
\begin{lemma}\label{lemma:Theorem VIII.5.12の一部}
	$\Sigma^1_1\bou\Dcz \equiv \Sigma^1_3\bou\Rfnz$.
\end{lemma}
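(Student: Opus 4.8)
\emph{Conventions and the easy direction.} I write a $\Sigma^1_1$ formula as $\exists Z\,\zeta$ and a $\Sigma^1_3$ formula as $\exists W\,\forall U\,\exists V\,\eta$, with $\zeta,\eta$ arithmetic, and use that $\Sigma^1_1$ statements are upward absolute from a coded $\omega$-model of $\Acaz$ to the real world while arithmetic statements are absolute. First I would prove $\Sigma^1_3\bou\Rfnz\vdash\Sigma^1_1\bou\mathrm{DC}$, in fact using only $\Pi^1_2$-reflection. Given an instance with hypothesis $\forall X\exists Y\,\varphi(X,Y)$ ($\varphi\in\Sigma^1_1$, with parameters $\bar A$), the hypothesis is a $\Pi^1_2\subset\Sigma^1_3$ formula in $\bar A$, so reflection supplies a coded $\omega$-model $M\ni\bar A$ with $M\models\Acaz+\forall X\exists Y\,\varphi(X,Y)$. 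Working in $\Acaz$ I then build the choice sequence inside $M$ by arithmetic recursion: given $Y^n=\langle Y_0,\dots,Y_{n-1}\rangle$, which is a column of $M$ by arithmetic closure, let $Y_n$ be the column $M_k$ of least index with $M\models\varphi(Y^n,M_k)$. Such a $k$ exists since $M\models\forall X\exists Y\,\varphi$, and ``$M\models\varphi(Y^n,M_k)$'' is arithmetic in finitely many jumps of $M$, hence decidable in $\Acaz$; so the recursion is total and $\Acaz$ forms $Y=\bigoplus_nY_n$. By upward absoluteness $\varphi(Y^n,Y_n)$ holds, which is the conclusion of $\Sigma^1_1\bou\mathrm{DC}$.

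\emph{The main direction: construction.} For $\Sigma^1_1\bou\Dcz\vdash\Rfn_\varphi$ with $\varphi\in\Sigma^1_3$ (recall $\Sigma^1_1\bou\Dcz\vdash\Acaz$), fix parameters $\bar X$ with $\varphi(\bar X)$ and a witness $W$ for the leading existential, so $\forall U\exists V\,\eta(W,U,V,\bar X)$ holds. I would obtain the reflecting model as the limit of a dependent-choice sequence of finite ``approximate coded models''. Define an arithmetic step relation $\theta(P,Q)$ asserting that $Q$ extends $P$ column-wise, that $Q$ contains the Turing jump of the join of $P$, and that $Q$ contains some $V$ with $\eta(W,U,V,\bar X)$ for the single column $U$ of $P$ designated by a fixed bookkeeping. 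Since jumps exist ($\Acaz$) and, by the choice of $W$, a witness $V$ exists for \emph{every} $U$, we get $\forall P\exists Q\,\theta(P,Q)$; as $\theta$ is arithmetic, hence $\Sigma^1_1$, the reformulation of $\Sigma^1_1\bou\mathrm{DC}$ fixing the initial term (the analogue for the non-unique case of Proposition~\ref{proposition:DCの言い換え}, cf.\ \cite{simpson_2009}) applied with $P_0$ coding $\bar X,W$ produces a sequence $\langle P_n\rangle_{n\in\N}$.

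\emph{The main direction: verification.} Let $M$ be the coded $\omega$-model whose columns are the columns of all the $P_n$. If the bookkeeping is arranged so that every column of every $P_n$ is designated at some later stage, then $M$ is closed under the Turing jump, so $M\models\Acaz$; moreover every $U\in M$ occurs in some $P_n$ and is served with a $V\in M$ satisfying $\eta(W,U,V,\bar X)$, which holds truly and hence, by arithmetic absoluteness, in $M$. Thus $M\models\forall U\exists V\,\eta(W,U,V,\bar X)$, and since $W,\bar X\in M$ we get $M\models\exists W\,\forall U\,\exists V\,\eta=\varphi(\bar X)$. Therefore $M$ witnesses $\Rfn_\varphi$, completing the equivalence.

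\emph{The main obstacle.} The crux is entirely in the forward (DC $\Rightarrow$ RFN) direction: designing one bookkeeping function so that the \emph{single} application of $\Sigma^1_1\bou\mathrm{DC}$ simultaneously closes $M$ under the Turing jump (securing $\Acaz$) and, for \emph{every} set that ever enters $M$ --- including sets introduced at late stages --- supplies a witness $V$ that keeps the $\Pi^1_2$ matrix $\forall U\exists V\,\eta(W,U,V,\bar X)$ true in $M$. I must verify that $\theta$ is genuinely arithmetic and total for \emph{all} $P$ (not only the intended approximations) so that $\Sigma^1_1\bou\mathrm{DC}$ applies, and that the chosen pairing serves every column cofinally; the surrounding checks (absoluteness, that the limit is a coded $\omega$-model, and $\bar X,W\in M$) are routine.
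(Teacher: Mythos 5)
Your two-directional architecture is essentially the standard argument behind the paper's citation (\cite{simpson_2009}, Theorem VIII.5.12), which is all the proof the paper itself offers, and your reflection-to-$\mathsf{DC}$ direction is correct as written: the satisfaction queries $M \models \varphi(Y^n, M_k)$ are decidable from a fixed jump of the code of $M$, so $\Acaz$ carries out the least-index recursion, and upward $\Sigma^1_1$-absoluteness finishes. The genuine gap is in the verification of the main direction, at the sentence ``$M$ is closed under the Turing jump, so $M \models \Acaz$.'' Jump-closure is not sufficient: a coded $\omega$-model satisfies $\Acaz$ exactly when every set arithmetic in finitely many of its members appears among its columns, so in particular it must be closed \emph{downward} under $\le_T$ (already $\varnothing$ must be a column). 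Your $\theta$ forces $\mathrm{TJ}(\oplus P)$ into $Q$, but the $Q$'s are handed to you by $\Sigma^1_1\bou\mathrm{DC}$ and you control their columns only through the clauses of $\theta$; nothing guarantees that an arbitrary $B \le_T (P_n)_k$ ever occurs as a column of any $P_m$. As you define the limit model (``columns of $M$ are the columns of all the $P_n$''), $M$ need not satisfy even $\Rcaz$. Your closing paragraph correctly senses that the crux is serving ``every set that ever enters $M$,'' but the verification paragraph then asserts precisely the false step.

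The repair is routine but interacts with your bookkeeping, which is why it cannot be waved through. If you close off downward, taking as columns of $M$ all sets of the form $\Phi_e^{(P_n)_k}$ when total (an arithmetic definition from $\braket{P_n}_{n\in\N}$, so $\Acaz$ forms $M$; closure under join, jump and $\le_T$ then follows because $\mathrm{TJ}(\oplus P_n)$ is a column of $P_{n+1}$ and $\oplus P_n \le_T \mathrm{TJ}(\oplus P_n)$), then $M$ contains sets that are never columns of any $P_n$, and your column-only bookkeeping never supplies witnesses $V$ for them, so $M \models \forall U \exists V\, \eta$ no longer follows. Both problems are fixed at once by letting the bookkeeping run over pairs $(e,k)$ and putting into $\theta(P,Q)$ the conditional clause: if $\Phi_e^{(P)_k}$ is total, then $Q$ contains some $V$ with $\eta(W, \Phi_e^{(P)_k}, V, \bar X)$. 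Totality of the reduction is $\Pi^0_2$ and membership in $\Phi_e^{(P)_k}$ is arithmetic in $P$, so $\theta$ stays arithmetic, and it stays total for \emph{all} $P$ (including garbage) because the clause is vacuous when the reduction diverges and a true witness exists otherwise by the choice of $W$. Since columns persist at fixed indices under column-wise extension and each pair $(e,k)$ is designated cofinally, every member of the closed-off $M$ is eventually served, and the rest of your verification (absoluteness of $\eta$, $\bar X, W \in M$) goes through as you describe.
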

\begin{proof}
\cite{simpson_2009} Theorem VIII.5.12.
\end{proof}

\begin{theorem}\label{Thorem:Sigma^1_3文で有限公理化される理論は超算術的解析でない．}
	There are no theories within $\mathcal{HA}$ that can be finitely axiomatized by $\Sigma^1_3$ sentences.
\end{theorem}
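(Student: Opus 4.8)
The plan is to argue by contradiction, using the reflection Lemma~\ref{lemma:Theorem VIII.5.12の一部} to force $\HYP$ to contain a coded copy of itself. Suppose some $T \in \mathcal{HA}$ is finitely axiomatized by $\Sigma^1_3$ sentences $\sigma_1, \ldots, \sigma_m$. Since every theory of hyperarithmetic analysis proves $\Acaz$, we have $T \equiv \Acaz + \sigma$ where $\sigma := \bigwedge_{i\le m} \sigma_i$; as $\Sigma^1_3$ is closed under finite conjunction over $\Acaz$ (pair the leading set-existential quantifiers via $\oplus$, then merge the $\forall$- and $\exists$-blocks), $\sigma$ is again $\Sigma^1_3$. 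Thus for any coded $\omega$-model $M$ we have $M \models \sigma + \Acaz$ if and only if $M \models T$.

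Next I would invoke $\omega$-model reflection on $\sigma$. Because $\sigma \in \Sigma^1_3$, the instance $\Rfn_\sigma$ lies in $\Sigma^1_3\bou\Rfn$, so by Lemma~\ref{lemma:Theorem VIII.5.12の一部} we obtain $\Sigma^1_1\bou\Dcz \vdash \Rfn_\sigma$. Since $\Sigma^1_1\bou\Dcz$ is a theory of hyperarithmetic analysis, $\HYP = \HYP(\varnothing)$ is a model of it, whence $\HYP \models \Rfn_\sigma$. Moreover $\HYP \models T$ (again because $T \in \mathcal{HA}$), so $\HYP \models \sigma$; as $\sigma$ is exactly the antecedent of $\Rfn_\sigma$, the model $\HYP$ must contain a coded $\omega$-model $M$ with $M \models \sigma + \Acaz$, that is, $M \models T$.

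The contradiction then comes from the minimality built into the definition of $\mathcal{HA}$. Since satisfaction in a coded $\omega$-model is arithmetic in the code, and arithmetic statements are absolute between $\omega$-models, $M$ is \emph{really} a coded $\omega$-model of $T$; and $M \in \HYP$ means $M$ is a hyperarithmetic subset of $\N$, so each column $M_k$ is hyperarithmetic and hence $\{M_k : k \in \N\} \subseteq \HYP$. On the other hand, $M$ is a genuine $\omega$-model of $T$, so by the closure clause of the definition it is closed under hyperarithmetic reduction; since $M \models \Acaz$ we have $\varnothing \in M$, and therefore $\HYP = \HYP(\varnothing) \subseteq \{M_k : k \in \N\}$. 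Thus $\{M_k : k \in \N\} = \HYP$. Now the diagonal set $D = \{ k : \langle k,k\rangle \notin M \}$ is computable from the hyperarithmetic set $M$, hence hyperarithmetic, hence $D = M_{k_0}$ for some $k_0$; but $k_0 \in D$ iff $\langle k_0,k_0\rangle \notin M$ iff $k_0 \notin M_{k_0}$ iff $k_0 \notin D$, which is absurd.

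The main obstacle, and the step deserving the most care, is the absoluteness argument that the reflected witness $M$ is genuinely an $\omega$-model of $T$ and genuinely hyperarithmetic, i.e.\ that the satisfaction relation $M \models \sigma + \Acaz$ computed inside $\HYP$ agrees with the true one; this is precisely where the arithmetic definability of the satisfaction predicate for coded $\omega$-models (recorded in the preliminaries) is invoked. A secondary point to check is the reduction in the first paragraph, namely that collapsing the finitely many axioms into a single sentence keeps us within $\Sigma^1_3$, so that Lemma~\ref{lemma:Theorem VIII.5.12の一部} is genuinely applicable to $\Rfn_\sigma$.
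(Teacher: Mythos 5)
Your proposal is correct and follows essentially the same route as the paper: invoke Lemma~\ref{lemma:Theorem VIII.5.12の一部} to get $\HYP \models \Sigma^1_3\bou\Rfnz$, reflect the (conjoined) $\Sigma^1_3$ axiomatization to obtain a coded $\omega$-model of $T$ inside $\HYP$, and contradict minimality --- your write-up merely makes explicit the steps the paper leaves implicit (closure of $\Sigma^1_3$ under conjunction, absoluteness of coded satisfaction, and the diagonalization showing $\HYP$ cannot code itself). One small caveat: your claim that every $T \in \mathcal{HA}$ \emph{proves} $\Acaz$ is not justified (the definition only constrains $\omega$-models, not deductive strength), but this is harmless since your argument only uses the trivial direction that $M \models \sigma + \Acaz$ implies $M \models T$.
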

\begin{proof}
From the previous lemma, $\HYP = \HYP(\varnothing) \models \Sigma^1_3\bou\Rfnz$.Therefore, for $\Sigma^1_3$ sentences that are true in $\HYP$, there exists a model within $\HYP$, that is, a model strictly smaller than $\HYP$, for these sentences.
\end{proof}
Next, in order to analyze $\mathcal{HA}$, we introduce a class of theories that approximate it.



\begin{definition}
Let $T$ be a recursively axiomatizable $\Lt$ theory.We denote the following statement as $\Rfn(T)$, and call it the $\omega$-model reflection of $T$:
\[ 
\forall X \exists M (X \in M \land M \text{ is a coded}~\omega \text{-model of } T + \Acaz).
\]
Furthermore, for each $n \in \omega$, we recursively define the $n$-fold iteration of the $\omega$-model reflection, $\Rfn^n(T)$, as follows:
\begin{align*}
	\Rfn^0(T) &= T, \\
	\Rfn^{n+1}(T) &= \Rfn( \Rfn^n(T)+\Acaz).
\end{align*}
As an example, we define $\Rfn^n(T)_0 := \Rfn^n(T) + \Acaz$, and specifically denote $\Rfn^1(T)_0 = \Rfn(T + \Acaz) + \Acaz$ as $\Rfn(T)_0$.
\end{definition}

From the strong soundness theorem (\cite{simpson_2009} Theorem II.8.10), the following fundamental fact follows:
\begin{lemma}\label{lemma:RFN(T)→Con(T)}
    For any recursively axiomatizable $\Lt$ theory $T$, $\Acaz \vdash \Rfn(T) \rightarrow \Con(T)$.
\end{lemma}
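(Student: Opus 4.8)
The plan is to extract an actual model from the reflection hypothesis and then feed it into the formalized soundness theorem, reasoning throughout inside $\Acaz$. First I would assume $\Rfn(T)$ and instantiate its leading universal quantifier with any fixed set, say $X=\varnothing$. This produces a set $M$ coding an $\omega$-model with $\varnothing\in M$ such that $M$ is a coded $\omega$-model of $T+\Acaz$; in particular $M$ satisfies every axiom of $T$. Since $T$ is recursively axiomatizable, the collection of its axioms is arithmetically definable, and by the discussion following Lemma VII.2.2 of \cite{simpson_2009} the satisfaction relation $M\models\sigma$ for $\Lt$-sentences $\sigma$ is available over $\Acaz$; thus ``$M$ is a model of $T$'' is a legitimate arithmetic assertion, namely that $M\models\sigma$ holds for each axiom $\sigma$ of $T$.

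Next I would invoke the strong soundness theorem (\cite{simpson_2009} Theorem II.8.10) in exactly the form already quoted in the preliminaries: over $\Acaz$, the existence of a coded $\omega$-model satisfying a recursively axiomatized theory yields the consistency of that theory. Concretely, suppose toward a contradiction that $\Prf_T$ witnessed a derivation of $0=1$ (or any fixed contradiction) from $T$. Such a derivation is a finite object invoking only finitely many axioms $\sigma_1,\dots,\sigma_k$ of $T$, each of which $M$ satisfies. Soundness of the proof calculus, formalized in $\Acaz$, propagates truth in $M$ along the derivation and forces $M\models 0=1$. But $M$ is a coded $\omega$-model, so its first-order part is the standard $\omega$ and hence $M\models 0\neq1$, a contradiction. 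Therefore no such derivation exists, which is precisely $\Con(T)$.

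The routine parts are the instantiation of the quantifier and the observation that an $\omega$-model refutes $0=1$. The point I expect to require the most care is that $T$ is an infinite, merely recursively axiomatizable theory rather than a single sentence: I must make sure the soundness theorem is applied to the full axiom set of $T$, relying on the definability of both that axiom set and of the satisfaction relation for $M$, and that the finiteness of any putative proof reduces the matter to finitely many axioms all verified true in $M$. This reduction is exactly what bridges the single-sentence statement of Theorem II.8.10 to consistency of the whole theory $T$, and it is the step I would write out most explicitly.
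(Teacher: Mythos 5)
There is a genuine gap, and it sits exactly where you predicted needing the most care. Your claim that ``the satisfaction relation $M \models \sigma$ for $\Lt$-sentences $\sigma$ is available over $\Acaz$'' misreads Lemma VII.2.2 of \cite{simpson_2009}: that lemma is a \emph{schema}, providing for each fixed, standard formula $\varphi$ a valuation of its substitution instances (built from finitely many Turing jumps, the number depending on $\varphi$). It does not provide a single set coding the full satisfaction relation of a coded $\omega$-model $M$; such a set essentially computes arbitrarily many jumps of $M$, and its existence for all $M$ is $\Acaz^+$-strength, which $\Acaz$ does not prove. This matters because $\Con(T)$ is a statement about derivations \emph{internal} to the system: your putative derivation of $0=1$ is finite only in the internal sense, and the axioms $\sigma_1,\dots,\sigma_k$ it invokes (as well as the cut/modus-ponens formulas occurring along it) are sentence codes of unbounded --- in a nonstandard model of $\Acaz$, possibly nonstandard --- complexity. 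So the reduction ``finitely many axioms, each verified true in $M$'' cannot be carried out in $\Acaz$ formula by formula; it needs a uniform satisfaction function as a completed set, which is precisely what is unavailable. Note also that the form of strong soundness quoted in the paper's preliminaries is for a single sentence $\sigma$; extending it to an infinite recursively axiomatized $T$ is the entire content of the lemma, and your proof assumes that extension rather than proving it.

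The paper's proof supplies exactly the missing step: from $\Rfn(T)$ one first derives $\Acaz^+$ over $\Acaz$ (the reflecting models are in particular coded $\omega$-models of $\Acaz$ containing any prescribed $X$, and that statement yields $\Acaz^+$). Over $\Acaz^+$ the full satisfaction relation for the coded $\omega$-model $M$ exists as a set, turning $M$ into a weak model of all of $T$ at once, whereupon the strong soundness theorem (\cite{simpson_2009} Theorem II.8.10) gives $\Con(T)$. Your argument is essentially fine for finitely axiomatized $T$ --- the paper dismisses that case as straightforward for the same reason --- so the repair is to insert the derivation of $\Acaz^+$ from $\Rfn(T)$ before invoking soundness in the infinite case.
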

\begin{proof}
	The case when $T$ is a sentence is straightforward.For $T$ being recursively axiomatizable, especially when it is an infinite set, $\Acaz + \Rfn(T) \vdash \Acaz^+$.This ensures that within the system, the satisfaction relation for a $\codedom$ can be defined, which determines the truth values of all formulas.
\end{proof}

The class of theories defined by this axiom, $\Rfn^{-1}(\Atrz) := \set{ T | \Rfn(T) \equiv \Atrz}$, approximates $\mathcal{HA}$ to some extent.Here, we present two pieces of evidence: similarity in instances and similarity in closure properties.

As previously pointed out, explicitly axiomatized existing theories of hyperarithmetic analysis are encompassed between $\JI_0$ and $\Sigma^1_1\bou\Dcz$.Conversely, the next proposition states that such theories also belong to $\Rfn^{-1}(\Atrz)$.
\begin{proposition}
	If $T$ lies between $\mathsf{JI}_0$ and $\Sigma^1_1\bou\Dcz$, then $T \in \Rfn^{-1}(\Atrz)$.
\end{proposition}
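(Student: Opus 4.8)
The plan is to show both $\Atrz \vdash \Rfn(T)$ and $\Acaz + \Rfn(T) \vdash \Atrz$; since $\Atrz \supseteq \Acaz$, these together give $\Rfn(T) \equiv \Atrz$ over $\Acaz$, i.e. $T \in \Rfn^{-1}(\Atrz)$. (Here $T$ is tacitly recursively axiomatizable, as $\Rfn(T)$ requires.) First I would record the \emph{monotonicity} of reflection: if $S \vdash S'$ then, provably in $\Acaz$, any coded $\omega$-model of $S + \Acaz$ is a coded $\omega$-model of $S' + \Acaz$, so $\Acaz \vdash \Rfn(S) \to \Rfn(S')$. Because the hypothesis says $\Sigma^1_1\bou\Dcz \vdash T \vdash \JI_0$, it then suffices to prove the two endpoint facts
\[ \text{(A)}\quad \Atrz \vdash \Rfn(\Sigma^1_1\bou\Dcz), \qquad \text{(B)}\quad \Acaz + \Rfn(\JI_0) \vdash \Atrz. \]
Indeed, (A) together with $\Rfn(\Sigma^1_1\bou\Dcz) \to \Rfn(T)$ yields $\Atrz \vdash \Rfn(T)$, while (B) together with $\Rfn(T) \to \Rfn(\JI_0)$ yields $\Acaz + \Rfn(T) \vdash \Atrz$.

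For (B) I would work in $\Acaz + \Rfn(\JI_0)$, fix $X$ and $a \in \ko^X$, and produce $\mathrm{H}^X_a$. Using $\Rfn(\JI_0)$, take a coded $\omega$-model $M \ni X$ with $M \models \JI_0$. Since ``$b \in \ko^X$'' is $\Pi^1_1$ (Remark~\ref{remark:クリーねのOに関する論理式の複雑さ}) and $\Pi^1_1$ formulas are downward absolute to $\omega$-models, $M \models b \in \ko^X$ for every genuine $b \le^X_\ko a$. The key point is that the formula $\varphi(b)$ expressing ``$\mathrm{H}^X_b \in M$'' is arithmetic in the parameters $M, X, b$: it asserts the existence of a column $M_k$ satisfying the arithmetic defining recursion of the jump hierarchy up to $b$. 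Hence by $\Acaz$ I may run an arithmetic transfinite induction for $\varphi$ along the genuine well-ordering $\le^X_\ko\restriction a$ (an induction $\Acaz$ supports, since a least counterexample would give an arithmetically definable infinite $<^X_\ko$-descending sequence below $a$, contradicting $a \in \ko^X$). For the induction step at $b$: the hypothesis $\forall c <^X_\ko b\,\varphi(c)$ together with arithmetic absoluteness gives $M \models \forall c <^X_\ko b\,\exists \mathrm{H}^X_c$, and then $M \models \JI$ with $M \models b \in \ko^X$ forces $M \models \exists \mathrm{H}^X_b$, i.e. $\varphi(b)$. Instantiating at $b = a$ produces $\mathrm{H}^X_a$, which is exactly $\Atrz$.

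The main obstacle is (A). One cannot simply form $\HYP(X) = \{\langle a,n\rangle \mid a \in \ko^X \land n \in \mathrm{H}^X_a\}$ inside $\Atrz$, because $\ko^X$ is $\Pi^1_1$-complete and $\Atrz$ does not prove $\Pi^1_1$-comprehension; the defining clause of $\Atrz$ delivers each $\mathrm{H}^X_a$ only individually, for genuine $a \in \ko^X$, and the natural definition of the whole family is $\Sigma^1_1$ but not provably $\Delta^1_1$. What (A) really requires is the $\Atrz$-provable version of $\HYP(X) \models \Sigma^1_1\bou\Dcz$ (\cite{simpson_2009} Theorem VIII.4.11): namely that $\Atrz$ proves, for every $X$, the existence of a countable coded $\omega$-model of $\Sigma^1_1\bou\Dcz$ containing $X$. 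I would obtain such a hyperarithmetically closed model by a pseudohierarchy construction -- running the jump hierarchy along a linear order whose well-founded part exhausts $\ko^X$ and collecting the result into a single coded model using $\Sigma^1_1$ separation, available since $\Atrz \equiv \Sigma^1_1\bou\mathsf{SEP}_0$ (\cite{simpson_2009} Theorem V.5.1). Once such an $M \models \Sigma^1_1\bou\Dcz + \Acaz$ with $X \in M$ is in hand, $\Rfn(\Sigma^1_1\bou\Dcz)$ follows, and strong soundness (\cite{simpson_2009} Theorem II.8.10, cf. Lemma~\ref{lemma:RFN(T)→Con(T)}) guarantees the construction is not vacuous. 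Carefully formalizing this coded-model construction inside $\Atrz$, while circumventing the $\Pi^1_1$-ness of $\ko^X$, is the technical heart of the whole argument.
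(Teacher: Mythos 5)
Your proposal is correct and takes essentially the same approach as the paper: reduce to the two endpoint facts via monotonicity of $\Rfn$, and prove $\Acaz + \Rfn(\JI_0) \vdash \Atrz$ exactly as the paper does, by arithmetic transfinite induction along $\le^X_\ko$ below $a$ (justified in $\Acaz$ via the no-descending-sequence reading of $a \in \ko^X$) showing $\mathrm{H}^X_b \in M$ for a reflected $\omega$-model $M \models \JI$, with the $\Pi^1_1$ downward absoluteness of $b \in \ko^X$ and arithmetic absoluteness used in the same way. The only divergence is at your endpoint (A): the ``technical heart'' you flag is already a quotable result --- the paper simply cites \cite{simpson_2009} Lemma VIII.4.19 for $\Atrz \vdash \Rfn(\Sigma^1_1\bou\mathsf{DC})_0$, which is precisely the $\Atrz$-formalized statement whose pseudohierarchy-plus-$\Sigma^1_1$-separation proof you sketch.
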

\begin{proof}
According to \cite{simpson_2009} Lemma VIII.4.19, $\Atrz \vdash \Rfn(\Sigma^1_1\bou\mathsf{DC})_0$.
Let's show that $\Rfn(\mathsf{JI})_0 \vdash \Atrz$.
It is enough to choose $a$ and $X$ such that $a \in \ko^X$, and then demonstrate the existence of $\mathrm{H}^X_a$.

From the assumption, there exists a model $M$ such that $X \in M \models \JI$, and moreover, $M \models a \in \ko^X$, specifically implying that:
\[ M \models (\set{b | b <_{\ko}^X a}, <_{\ko}^X) \text{ is a well-order}.\]
Given that $M \models \JI$, it follows that 
$M \models (\forall b <_{\ko}^X a \exists \mathrm{H}^X_b) \rightarrow \exists \mathrm{H}^X_a$.
Therefore, what needs to be shown is
$\forall b <_{\ko}^X a \exists Y \in M (Y = \mathrm{H}^X_b)$.
The formula $\exists Y \in M (Y = \mathrm{H}^X_b)$ is arithmetical with set parameters included in $M$, so we will demonstrate this by arithmetic transfinite induction.If $|b|^X = 0$, it is trivial.If $|b|^X$ is a successor ordinal, it also holds as $M \models \Acaz$.Assume $|b|^X$ is a limit ordinal.By the induction hypothesis, we have $\forall c <_{\ko}^X b \exists Y \in M (Y = \mathrm{H}^X_c)$.
This implies $M \models \forall c <_{\ko}^X b \exists \mathrm{H}^X_c$.
Hence, from $M \models \JI$, it follows that $M \models \exists \mathrm{H}^X_b$, completing the induction.
\end{proof}

Next, we examine the similarity in closure properties.


\begin{lemma}\label{lemmma Ref(T)⇔Ref(T+￢Ref(T))}
	Let $T$ be an $\Lt$ sentence such that $T \vdash \Acaz$.Then, the following holds:
	\[\Acaz \vdash \Rfn(T)\leftrightarrow \Rfn(T+ \lnot \Rfn(T)).\]
\end{lemma}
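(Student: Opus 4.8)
The plan is to prove the two implications separately, the left-to-right one being the substantial direction. The converse, $\Rfn(T+\lnot\Rfn(T)) \to \Rfn(T)$, is immediate: since $T+\lnot\Rfn(T)\vdash T$, any coded $\omega$-model of $T+\lnot\Rfn(T)+\Acaz$ supplied by $\Rfn(T+\lnot\Rfn(T))$ is already a coded $\omega$-model of $T+\Acaz$, so the two reflections are witnessed at each $X$ by the very same $M$. Before attacking the hard direction I would record two expressibility facts that make the whole argument legitimate over $\Acaz$: first, $\Rfn(T)$ is (equivalent to) a $\Pi^1_2$ sentence, so $\lnot\Rfn(T)$ is $\Sigma^1_2$; and second, for a coded $\omega$-model $M$ the relations ``$M\models\Rfn(T)$'' and ``$M\models\lnot\Rfn(T)$'' unfold into statements arithmetical in $M$ (a block of quantifiers ranging over the indices of $M$ applied to an absolute arithmetical matrix), hence are formulas of $\Lt$ available inside $\Acaz$.

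For $\Rfn(T)\to\Rfn(T+\lnot\Rfn(T))$ I would follow the $\omega$-model analogue of the Gödelian equivalence $\Con(U)\leftrightarrow\Con(U+\lnot\Con(U))$, in two stages. The first stage is purely syntactic: working in $\Acaz$ and assuming $\Rfn(T)$, I derive $\Con(T+\lnot\Rfn(T))$. Indeed, if $T+\lnot\Rfn(T)$ were inconsistent then $T\vdash\Rfn(T)$, whence $T\vdash\Con(T)$ by the preceding lemma $\Acaz\vdash\Rfn(T)\to\Con(T)$ relativized to $T\supseteq\Acaz$; the formalized second incompleteness theorem then yields $\lnot\Con(T)$, contradicting the $\Con(T)$ that $\Rfn(T)$ already guarantees by that same lemma. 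All of this is a routine provability manipulation once Gödel's second theorem is formalized in $\Acaz$.

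The crux is the second stage: upgrading the bare consistency of $T+\lnot\Rfn(T)$ to an actual coded $\omega$-model realizing it over the prescribed parameter $X$. Concretely I must produce $M\ni X$ with $M\models T+\Acaz$ and, \emph{as a truth in $M$}, $M\models\lnot\Rfn(T)$, i.e. some $Y\in M$ lying in no coded $\omega$-model of $T+\Acaz$ internal to $M$. This is where I expect the main obstacle, and where one must resist the naive route: applying $\Rfn(T)$ to obtain \emph{some} $M\ni X$ of $T+\Acaz$ and hoping it already refutes $\Rfn(T)$, then descending into its internal sub-models when it does not, does \emph{not} terminate, because the resulting chain $\cdots\in M_1\in M_0$ of reflected models need not be well-founded and may never reach a model of $\lnot\Rfn(T)$ (this is exactly the pitfall to avoid). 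Instead I would invoke $\omega$-model incompleteness (\cite{simpson_2009} Theorem VIII.5.6): since $\Rfn(T)$ furnishes coded $\omega$-models of $T+\Acaz$ above every set, incompleteness guarantees a coded $\omega$-model $M\ni X$ of $T+\Acaz$ that is not reflection-complete, i.e. one internally satisfying the $\Sigma^1_2$ sentence $\lnot\Rfn(T)$, and this $M$ witnesses $\Rfn(T+\lnot\Rfn(T))$ at $X$. The delicate point to get right is to apply the incompleteness phenomenon \emph{uniformly above the given $X$}, so that the reflection-incomplete model is found over the prescribed parameter rather than via a self-referential or descending-chain construction — which is presumably where the original attempt went astray.
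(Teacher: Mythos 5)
Your easy direction and your expressibility remarks are fine, and you correctly diagnose the descending-chain pitfall; but the forward direction as you present it has a genuine gap, and it sits exactly where you flag ``the delicate point.'' Your stage 1 is correct but inert: $\Con(T+\lnot\Rfn(T))$ cannot be upgraded to a coded $\omega$-model because $\omega$-logic is not complete, and indeed the paper never derives this consistency statement at all. Your stage 2 then rests entirely on invoking Simpson's Theorem VIII.5.6 as a black box, and that invocation fails on two counts. First, VIII.5.6 is unparametrized: it produces \emph{some} $\omega$-model of $S$ together with ``there is no coded $\omega$-model of $S$,'' with no control over which sets the model contains, whereas you must find the reflection-incomplete model above the prescribed $X$. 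Second, and more fundamentally, VIII.5.6 is a metatheoretic existence statement, while the lemma asserts an implication provable in $\Acaz$: you must exhibit, inside an \emph{arbitrary} (possibly nonstandard) model $M \models \Acaz + \Rfn(T)$ with $X \in M$, some $N \in M$ with $X \in N \models T + \lnot\Rfn(T)$. Saying the incompleteness phenomenon must be ``applied uniformly above the given $X$'' names the problem; nothing in your sketch solves it, and the entire content of the lemma beyond the trivial direction is precisely that relativized, internalized form of $\omega$-model incompleteness.

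The paper proves exactly this missing relativization from scratch. Fix $M \models \Acaz+\Rfn(T)$ and $X \in M$, introduce a fresh set constant $C$, and consider the sentences: (1) $T$ plus ``no coded $\omega$-model of $T$ contains $C$''; (2) ``some coded $\omega$-model of $T$ contains $C$''; (3) ``no coded $\omega$-model of (1) contains $C$.'' Let $T^\ast = \Acaz + (2) + (3)$. Arguing inside $T^\ast$: the model $N'$ furnished by (2) must itself satisfy (2), since otherwise $N'$ would be a coded $\omega$-model of (1) containing $C$, contradicting (3); and $N'$ satisfies (3) because (3) is $\Pi^1_1$ and hence passes down to coded $\omega$-submodels. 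So $N' \models T^\ast$, and since $T^\ast$ is finitely axiomatized, the strong soundness theorem yields $T^\ast \vdash \Con(T^\ast)$; by G\"odel's second incompleteness theorem $T^\ast$ is inconsistent. As $(M, C := X) \models \Acaz + (2)$ by $\Rfn(T)$, it follows that $M \models \lnot(3)$, i.e., there is $N \in M$ with $X \in N$ a coded $\omega$-model of (1), and such an $N$ satisfies $\lnot\Rfn(T)$ with $X$ itself as the witness. The key idea your proposal lacks is this self-referential application of G\"odel II to the auxiliary finitely axiomatized theory $T^\ast$ carrying the parameter as a constant --- not to $T$, which is what your stage 1 does, and which buys nothing here.
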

\begin{proof}
The $(\leftarrow)$ direction is trivial.We now show that $\Acaz + \Rfn(T) \vdash \Rfn(T + \lnot \Rfn(T))$.First, if $\Acaz + \Rfn(T)$ is inconsistent, it is clear, so we may assume it is consistent.
Let us fix a model $M$ and an $X \in M$.What needs to be shown is the existence of an $N \in M$ satisfying:
\[X \in N \models T + \lnot \Rfn(T).\]
The subsequent discussion can be considered a relativization of the $\omega$-model incompleteness discussed in \cite{simpson_2009} Theorem VIII.5.6.

Let $C$ be a set constant.We name the following three $\Lt + \{C\}$ sentences as (1), (2), and (3) respectively:
\begin{enumerate}[(1)]
    \item $T + \lnot \exists N( C \in N \land N \text{ is a coded}~\omega\text{-model of } T)$.
    \item $\exists N( C \in N \land N \text{ is a coded}~\omega\text{-model of } T)$.
    \item $\lnot \exists N( C \in N \land N \text{ is a coded}~\omega\text{-model of } (1))$.
\end{enumerate}
Define $T^\ast := \Acaz + (2) + (3)$.To prove our conclusion, it is sufficient to show that $T^\ast$ is inconsistent.This is because, currently, $\Acaz + (2)$ is consistent (with $(M, C^M := X)$ serving as its model).Therefore, if $T^\ast$ turns out to be inconsistent, it would imply that $\Acaz + (2) \vdash \lnot (3)$.Consequently, $(M, C^M := X) \models \exists N( C \in N \land N \text{ is a coded}~\omega\text{-model of } (1))$.

In this paragraph, we discuss within  $T^\ast (\vdash \Acaz)$.First, from (2), there exists a $\codedom N'$ such that $C \in N' \models T$.If we suppose that $N' \not\models (2)$, then the negation of (3) holds, leading to a contradiction.Therefore, $N' \models (2)$.Since (3) is a valid $\Pi^1_1$ sentence, $N' \models (3)$ as well.Combining these, $N' \models T^\ast$.Given that $T^\ast$ is finitely axiomatized, $\Acaz$ allows us to construct a natural weak model of $T^\ast$ from $N'$.Hence, by the strong soundness theorem (\cite{simpson_2009} Theorem II.8.10), $T^\ast$ is consistent.

Therefore, by the second incompleteness theorem, $T^\ast$ is inconsistent.
\end{proof}

\begin{proposition}
	For an $\Lt$ sentence $T$ such that $T \vdash \Acaz$, the following are all equivalent:
\begin{enumerate}
	\item $T\in \mathcal{HA}$.
	\item $T+ \mathrm{full~induction} \in \mathcal{HA}$.
	\item $T+\lnot \Rfn(T) \in \mathcal{HA}$.
	\item $T+\lnot \ATR\in \mathcal{HA}$.
\end{enumerate}
\end{proposition}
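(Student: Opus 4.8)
The plan is to exploit that membership in $\mathcal{HA}$ is a condition purely about $\omega$-models: writing (HA1) for ``$\HYP(X)\models T$ for every $X$'' and (HA2) for ``every $\omega$-model of $T$ is closed under computable union and hyperarithmetic reduction'', a theory lies in $\mathcal{HA}$ iff it satisfies both. For any sentence $S$, the theory $T+S$ has fewer $\omega$-models than $T$, so (HA2) for $T+S$ is immediate from (HA2) for $T$; thus in each forward direction $(1)\Rightarrow(k)$ the only thing to check is (HA1) for the enlarged theory, i.e.\ that $\HYP(X)$ satisfies the extra axiom. I would isolate the three facts that drive this. First, every $\omega$-model satisfies $\mathrm{full~induction}$, since its first-order part is standard. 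Second, $\HYP(X)\models\lnot\ATR$ for every $X$ (via a Harrison pseudo-well-ordering relative to $X$, whose jump hierarchy would compute $\ko^X\notin\HYP(X)$; alternatively, since $\Atrz\vdash\Rfn(\Sigma^1_1\bou\Dcz)$ this reduces to the third fact applied to $\Sigma^1_1\bou\Dcz$). Third, for $T\in\mathcal{HA}$, $\HYP(X)\models\lnot\Rfn(T)$ for every $X$: this is the crux of the forward direction, and I would prove it by taking $X$ itself as the witness. A coded $\omega$-model $N\in\HYP(X)$ with $X\in N\models T$ would, by (HA2) for $T$, be closed under hyperarithmetic reduction, and since all its columns are $\le_T N\le_h X$ its universe would be exactly $\HYP(X)$; but then $N\in\HYP(X)$ appears as one of its own columns while $\TJ(N)$ must also lie in its universe (as $N\models\Acaz$), forcing $\TJ(N)\le_T N$, contradicting $N<_T\TJ(N)$.

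Granting these facts, the three forward implications are routine: $\HYP(X)$ always models $\mathrm{full~induction}$, $\lnot\ATR$, and (by the crux) $\lnot\Rfn(T)$, so (HA1) holds for $T+\mathrm{full~induction}$, $T+\lnot\ATR$, and $T+\lnot\Rfn(T)$, while (HA2) for each is inherited from $T$.

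For the converse implications (HA1) for $T$ is trivial, being implied by (HA1) for the stronger theory, so the work is (HA2) for $T$. Given an arbitrary $\omega$-model $M\models T$, I would split on the extra axiom. For $(2)\Rightarrow(1)$ this is immediate: $M$ is an $\omega$-model, hence automatically satisfies $\mathrm{full~induction}$, so $M\models T+\mathrm{full~induction}$ and (HA2) for that theory applies. For $(4)\Rightarrow(1)$, if $M\models\lnot\ATR$ apply (HA2) for $T+\lnot\ATR$; if instead $M\models\ATR$ then $M\models\Atrz$, which proves $\JI$, so by the earlier proposition every $\omega$-model of $\JI_0$---in particular $M$---is closed under computable union and hyperarithmetic reduction.

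The genuinely delicate case is $(3)\Rightarrow(1)$ when $M\models T+\Rfn(T)$, and this is where I expect the main difficulty. I would route through Lemma~\ref{lemmma Ref(T)⇔Ref(T+￢Ref(T))}: since $M\models\Acaz$, $M\models\Rfn(T)$ gives $M\models\Rfn(T+\lnot\Rfn(T))$, and by hypothesis $S:=T+\lnot\Rfn(T)\in\mathcal{HA}$. It then suffices to establish the general principle that if an $\omega$-model $M$ satisfies $\Acaz+\Rfn(S)$ for some $S\in\mathcal{HA}$, then $M$ is closed under hyperarithmetic reduction. For this, given $Y\in M$ and $Z\le_h Y$, reflection yields a coded $\omega$-model $N\in M$ with $Y\in N\models S$; its columns are $\le_T N\in M$, so its universe is a genuine sub-$\omega$-model of $M$ satisfying $S$, and since $S\in\mathcal{HA}$, (HA2) makes $N$ closed under hyperarithmetic reduction, whence $Z\in N\subseteq M$. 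Closure under computable union is automatic in every case, as all models in sight satisfy $\Acaz$. Assembling the six implications $(1)\Rightarrow(2),(3),(4)$ and $(2),(3),(4)\Rightarrow(1)$ yields the four-way equivalence.
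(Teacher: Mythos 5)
Your proof is correct and follows essentially the same route as the paper's: the same decomposition into forward directions (checking $\HYP(X)$ satisfies the extra axiom) and converses (case-splitting on whether $M$ satisfies $\Rfn(T)$ resp.\ $\ATR$), with Lemma~\ref{lemmma Ref(T)⇔Ref(T+￢Ref(T))} invoked at exactly the same point in $(3)\Rightarrow(1)$. The only difference is that you spell out details the paper leaves implicit — the jump-diagonalization showing $\HYP(X)\models\lnot\Rfn(T)$, the failure of $\ATR$ in $\HYP(X)$, and the reduction of the $M\models\ATR$ case to the paper's earlier proposition on $\omega$-models of $\JI_0$ — all of which are sound.
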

\begin{proof}
	The equivalence $(1 \Leftrightarrow 2)$ is evident because $\omega$-models always satisfy all instances of induction.

$(1 \Rightarrow 3)$  
From (1), for each $X \subset \omega$, $\HYP(X) \models T$.Now, suppose $\HYP(X) \models \Rfn(T)$.This would imply the existence of some $M \in \HYP(X)$ such that $X \in M \models T$.However, this contradicts the fact that $\HYP(X)$ is the smallest $\omega$-model of $T$ containing $X$.Therefore, $\HYP(X) \models \lnot \Rfn(T)$.It follows immediately from 1 that any $\omega$-model of $T + \lnot \Rfn(T)$ is closed under hyperarithmetic reduction.

$(3 \Rightarrow 1)$ 
From 3, it is clear that each $X \subset \omega$, $\HYP(X) \models T$ holds.Next, we demonstrate minimality.Consider an $\omega$-model $M$ such that $X \in M \models T$.If $M \not\models \Rfn(T)$, then it is evident from 3.Assume that $M \models \Rfn(T)$.Then, by Lemma \ref{lemmma Ref(T)⇔Ref(T+￢Ref(T))}, $M \models \Rfn(T + \lnot \Rfn(T))$.Consequently, there exists an $N \in M$ such that $X \in N \models T + \lnot \Rfn(T)$, and from 3, $\HYP(X) \subset N \subset M$.

	$(1 \Rightarrow 4)$ 
 The argument is essentially the same as in $(1 \Rightarrow 3)$.It follows from the fact that for each $X \subset \omega$, $\HYP(X)$ does not model $\ATR$.

	$(4 \Rightarrow 1)$
	The discussion is essentially the same as in $(3 \Rightarrow 1)$.Use the fact that $\omega$-models of $\Atrz$ are closed under hyperarithmetic reduction.

\end{proof}
It is considered that the aforementioned closure property, which arises from the fact that hyperarithmetic analysis is defined using $\omega$-models, also holds for $\Rfn^{-1}(\Atrz)$.

\begin{theorem}
For an $\Lt$ sentence $T$ such that $T \vdash \Acaz$, the following are all equivalent:
\begin{enumerate}
	\item $T\in \Rfn^{-1}(\Atrz)$.
	\item $T+ \mathrm{full~induction}\in \Rfn^{-1}(\Atrz)$.
	\item $T+\lnot \Rfn(T) \in \Rfn^{-1}(\Atrz)$.
	\item $T+\lnot \ATR \in \Rfn^{-1}(\Atrz)$.
\end{enumerate}	
\end{theorem}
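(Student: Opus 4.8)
The plan is to mirror the structure of the preceding proposition about $\mathcal{HA}$, establishing the four conditions as equivalent by showing that each of the augmented theories has an $\omega$-model reflection that is provably equivalent, over $\Acaz$, to $\Rfn(T)$ itself. Three tools carry the argument: Lemma~\ref{lemmma Ref(T)⇔Ref(T+￢Ref(T))}, which already gives $\Acaz \vdash \Rfn(T) \leftrightarrow \Rfn(T + \lnot \Rfn(T))$; the elementary fact that every coded $\omega$-model satisfies all instances of induction (its number part is genuinely $\N$); and the downward absoluteness of well-foundedness together with the absoluteness of the arithmetically defined $\mathrm{H}$-hierarchy between a coded $\omega$-model and its ambient model. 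I will use repeatedly the bookkeeping principle that if $\Acaz \vdash S_1 \leftrightarrow S_2$ then $\Acaz \vdash \Rfn(S_1) \leftrightarrow \Rfn(S_2)$: any coded $\omega$-model is a model of $\Acaz$ and hence cannot distinguish $S_1$ from $S_2$, so ``is a coded $\omega$-model of $S_1 + \Acaz$'' and ``is a coded $\omega$-model of $S_2 + \Acaz$'' are provably equivalent conditions.

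For $(1 \Leftrightarrow 2)$ I would observe that a coded $\omega$-model of $T + \Acaz$ is automatically a coded $\omega$-model of $T + \mathrm{full~induction} + \Acaz$, whence $\Acaz \vdash \Rfn(T) \leftrightarrow \Rfn(T + \mathrm{full~induction})$ and the two theories share membership in $\Rfn^{-1}(\Atrz)$. For $(1 \Leftrightarrow 3)$ I would simply invoke Lemma~\ref{lemmma Ref(T)⇔Ref(T+￢Ref(T))}: since $\Rfn(T)$ and $\Rfn(T + \lnot \Rfn(T))$ are $\Acaz$-equivalent, one equals $\Atrz$ iff the other does. For $(1 \Rightarrow 4)$, assume $\Acaz \vdash \Rfn(T) \leftrightarrow \ATR$; then $\Acaz \vdash \lnot \Rfn(T) \leftrightarrow \lnot \ATR$, so $T + \lnot \ATR$ and $T + \lnot \Rfn(T)$ are $\Acaz$-equivalent, and chaining the reflection principle with Lemma~\ref{lemmma Ref(T)⇔Ref(T+￢Ref(T))} and $(1)$ gives
\[
\Rfn(T + \lnot \ATR) \equiv \Rfn(T + \lnot \Rfn(T)) \equiv \Rfn(T) \equiv \Atrz,
\]
which is exactly $(4)$.

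The crux is $(4 \Rightarrow 1)$. One inclusion is immediate: $\Rfn(T + \lnot \ATR) \vdash \Rfn(T)$ trivially, and by $(4)$ we have $\Atrz \equiv \Rfn(T + \lnot \ATR)$, so $\Atrz \vdash \Rfn(T)$. For the converse I would show $\Acaz + \Rfn(T) \vdash \ATR$ by arguing inside $\Acaz + \Rfn(T)$ and assuming $\lnot \ATR$ for contradiction. Assuming $\lnot \ATR$, fix $X_0 \in \WO$ and $a \in \ko^{X_0}$ such that $\mathrm{H}^{X_0}_a$ does not exist. Given an arbitrary $X$, apply $\Rfn(T)$ to $X \oplus X_0$ to obtain a coded $\omega$-model $M \ni X \oplus X_0$ with $M \models T$. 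By downward absoluteness of well-foundedness, $X_0$ remains a well-order and $a \in \ko^{X_0}$ in $M$; if $M \models \ATR$ then $M$ contains $\mathrm{H}^{X_0}_a$, and since the defining recursion equation for this hierarchy is arithmetic and hence absolute, it would be a genuine $\mathrm{H}^{X_0}_a$ in the ambient model, contradicting the choice of $X_0,a$. Hence $M \models \lnot \ATR$, so $M$ witnesses $\Rfn(T + \lnot \ATR)$ for this $X$. As $X$ was arbitrary, $\Rfn(T + \lnot \ATR)$ holds, and by $(4)$ this yields $\ATR$, contradicting $\lnot \ATR$. Therefore $\Acaz + \Rfn(T) \vdash \ATR$, which with $\Atrz \vdash \Rfn(T)$ gives $(1)$.

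I expect the main obstacle to be precisely this last absoluteness step in $(4 \Rightarrow 1)$: one must be careful to choose the witness $(X_0,a)$ to $\lnot \ATR$ in the ambient model \emph{before} reflecting, and then use that well-foundedness (and membership in $\ko^{X_0}$) is preserved downward into the reflected submodel while the $\mathrm{H}$-set it would produce is preserved upward as a genuine hierarchy. Once this forcing of $M \models \lnot \ATR$ is in place, the remaining equivalences are purely formal consequences of the reflection principle and Lemma~\ref{lemmma Ref(T)⇔Ref(T+￢Ref(T))}.
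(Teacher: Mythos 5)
Your proof is correct. For $(1\Leftrightarrow 2)$, $(1\Leftrightarrow 3)$ and $(1\Rightarrow 4)$ it coincides with the paper's proof: all three reduce to Lemma \ref{lemmma Ref(T)⇔Ref(T+￢Ref(T))} together with the transfer principle that $\Acaz$-provably equivalent sentences have $\Acaz$-provably equivalent reflections (your explicit ``bookkeeping principle,'' which the paper uses tacitly and which is justified by the formalized satisfaction relation and strong soundness it cites). The crux $(4\Rightarrow 1)$, however, is handled by a genuinely different argument. The paper argues semantically inside an arbitrary model $M \models \Rfn(T)_0$: given $X$, reflection yields $N \in M$ with $X \in N \models T$; if $N \not\models \ATR$ one is done, and if $N \models \ATR$ then, since (4) gives $\Atrz \vdash \Rfn(T+\lnot\ATR)_0$, this provable implication is applied \emph{inside} $N$ to produce a nested coded $\omega$-model $N' \in N$ of $T + \lnot\ATR$ containing $X$, which also lies in $M$. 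You instead prove $\Acaz + \Rfn(T) \vdash \ATR$ by contradiction: fixing an ambient counterexample $(X_0, a)$ with $a \in \ko^{X_0}$ but no $\mathrm{H}^{X_0}_a$, you use downward absoluteness of the $\Pi^1_1$ statement $a \in \ko^{X_0}$ and upward absoluteness of the arithmetic formula $Y = \mathrm{H}^{X_0}_a$ to force every reflected model to satisfy $\lnot\ATR$, so that $\Rfn(T+\lnot\ATR)$, and hence by (4) $\ATR$, follows. Your route avoids the paper's two-level nesting of coded $\omega$-models, at the price of invoking the absoluteness machinery for the $\mathrm{H}$-hierarchy, which the paper's proof never needs; the paper's route uses only the transfer of provable implications into coded models, the same principle your bookkeeping step already requires. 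Both arguments are sound, and your ordering of quantifiers in the crux (choosing the witness $(X_0,a)$ before reflecting) is exactly the care the step demands.
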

\begin{proof}
	$(1\Leftrightarrow 2)$ It is clear.

	$(1 \Leftrightarrow 3)$  
	It follows from Lemma \ref{lemmma Ref(T)⇔Ref(T+￢Ref(T))}.$(1 \Rightarrow 4)$ It follows from Lemma \ref{lemmma Ref(T)⇔Ref(T+￢Ref(T))}.

$(4 \Rightarrow 1)$ First, from 4, $\Atrz \equiv \Rfn(T+\lnot \ATR)_0 \vdash \Rfn(T)_0$.To demonstrate the converse, consider a model $M \models \Rfn(T)_0$ and fix $X \in M$.According to 4, it suffices to find a model of $T+\lnot \ATR$ containing $X$ within $M$.Now, since $M \models \Rfn(T)_0$, there exists an $N\in M$ such that $X \in N \models T$.If $N \not \models \ATR$, then we are done.Assuming $N \models \ATR$, again by 4, $N \models \Rfn(T+\lnot \ATR)_0$, therefore, there exists an $N' \in N$ satisfying $X \in N' \models T+\lnot \ATR$.Since $N' \in M$, this is sufficient.

The remainder follows from Lemma \ref{lemmma Ref(T)⇔Ref(T+￢Ref(T))}.

\end{proof}


As seen above, it can be said that $\Rfn^{-1}(\Atrz)$ approximates $\mathcal{HA}$ to some extent.From this, two types of questions arise that are considered useful for exploring the structure of $\mathcal{HA}$.One is the question of exploring similarities, which investigates what other properties both share.The other is the question of exploring differences, which examines how they differ.

Let us introduce the question concerning similarities.As seen in Theorem \ref{Thorem:Sigma^1_3文で有限公理化される理論は超算術的解析でない．}, there are no $\Sigma^1_3$ sentences in $\mathcal{HA}$.It is a natural question to ask whether this also holds for $\Rfn^{-1}(\Atrz)$.Unfortunately, this question has not been completely resolved, and in this paper, we provide a partial answer in the following sense: namely, that there are no $\Pi^1_2$ finitely axiomatizable theories at least within $\Rfn^{-1}(\Atrz)$.

\begin{theorem}\label{theorem:uniqueSigma10DC0上で任意一意存在型から2回RFNが出る}
Assume that $\theta(X,Y)$ is an arithmetic formula with no free variables other than $X$ and $Y$.Then the following holds:
\[\unique\Pi^1_0\bou\Dcz \vdash \forall X \exists	! Y \theta(X,Y) \rightarrow \Rfn^2(\forall X \exists	! Y \theta(X,Y)).\]	
\end{theorem}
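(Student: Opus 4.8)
The plan is to realize the two nested layers of $\omega$-model reflection demanded by $\Rfn^2$ through two applications of $\unique\Pi^1_0\bou\Dcz$, where the crucial device is to carry the witnessing dependent-choice sequences as the \emph{objects} of the outer iteration, so that the outer step relation stays $\Pi^1_0$ rather than becoming $\Sigma^1_1$. Throughout I assume $\forall X \exists ! Y\,\theta(X,Y)$ and write $F$ for the induced single-valued operator ($F(X)$ is the unique $Y$ with $\theta(X,Y)$); recall that $\unique\Pi^1_0\bou\Dcz \vdash \unique\Pi^1_0\bou\Acz$, which I use freely.

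Step 1 (inner reflection). First I would write an arithmetic formula $\theta'(W,W')$ saying that $W'$ is the one-step closure of the family coded by $W$: its columns comprise the columns of $W$, the join $W \oplus \TJ(W)$, the values $F((W)_k)$ for every $k$, together with the usual $\le_{\mathrm T}$-bookkeeping needed so that a limit of such steps is a coded $\omega$-model of $\Acaz$. Assembling the columns $F((W)_k)$ by $\unique\Pi^1_0\bou\Acz$, one checks $\forall W \exists ! W'\,\theta'(W,W')$. Applying the dependent-choice reformulation $\ref{proposition:DCの言い換え}(2)$ to $\theta'$ from a base $A$ gives a sequence $\langle H_k\rangle$ with $H_0 = A$, whose union of columns, packaged as a single set $N_A$, is a coded $\omega$-model with $A \in N_A$ and $N_A \models T + \Acaz$; here uniqueness of the model clause is inherited automatically, since $\theta$ has exactly one genuine solution and $\theta$ is absolute between $\omega$-models. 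In particular this already yields $\unique\Pi^1_0\bou\Dcz + T \vdash \Rfn(T + \Acaz)$.

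Step 2 (outer reflection, the main point). To build a model of $\Rfn(T+\Acaz)$ one wants the chain $B_{m+1} = N_{B_m}$, i.e. closure under the operator $A \mapsto N_A$; but the graph of $N_A$ is $\Sigma^1_1$, which would seem to require $\unique\Sigma^1_1\bou\Dcz$. To stay within $\Pi^1_0$, I take the objects of the outer iteration to be the inner $\theta'$-sequences themselves. Writing $N(W)$ for the model-code read off from an inner sequence $W$ (arithmetic in $W$), define $\Theta(W,W') :\equiv (W')_0 = N(W) \land \forall k\,\theta'((W')_k,(W')_{k+1})$. Because $W'$ explicitly contains the inner witnesses, $\Theta$ is $\Pi^1_0$ and its successors are unique; moreover $\forall W \exists ! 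W'\,\Theta(W,W')$ holds, since for any $W$ the inner sequence based at $N(W)$ exists by Step 1 and is unique. Applying $\unique\Pi^1_0\bou\Dcz$ to $\Theta$ from the inner sequence $W_0$ based at $X$ produces $\langle W_m\rangle$, and setting $B_m := N(W_m)$ yields $B_0 = N_X$ and $B_{m+1} = N_{B_m}$ with $B_m \in B_{m+1}$.

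Step 3 (assembly). Finally I would let $M$ be the coded $\omega$-model, arithmetic in $\langle W_m\rangle$ and hence available by $\Acaz$, whose sets are all interior columns of every $N(W_m)$ together with the codes $N(W_m)$ themselves. Then $X \in N_X \subset M$ and $M \models \Acaz$ (jumps and reducts of interior sets stay inside their own $N(W_m)$, and jumps of a code $N(W_m)$ lie in $N(W_{m+1}) \in M$). For $M \models \Rfn(T+\Acaz)$: every $Y \in M$ is either an interior set of some $N(W_m)$, covered by the model $N(W_m) \in M$, or a code $N(W_m)$, covered by $N(W_{m+1}) \ni B_{m+1} = N(W_m)$, again in $M$; in both cases the witness satisfies $T + \Acaz$ by Step 1. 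As $X$ was arbitrary, $\Rfn^2(T)$ follows. The main obstacle is precisely the device of Step 2: a direct iteration of model-building is $\Sigma^1_1$ and beyond $\Pi^1_0$-$\mathsf{DC}$, and the content of the statement is that, because the instance is of unique-existence type, one may carry the dependent-choice witnesses and keep the whole two-level construction inside $\unique\Pi^1_0\bou\Dcz$; the secondary care is to verify that ``$N$ codes an $\omega$-model of $T+\Acaz$'' is arithmetic and absolute between $\omega$-models, so that the reflection clause of $M$ is genuinely witnessed, and that model-codes $N(W_m)$ are honest elements of $M$, which is what $\Rfn^2$ (as opposed to a single $\Rfn$) requires.
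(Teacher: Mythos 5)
Your proposal is correct and is essentially the paper's own proof: your Step 1 is exactly the paper's arithmetic one-step closure formula $\psi$ (unique $\theta$-solutions together with joins, Turing jumps and Turing reducts) iterated by $\unique\Pi^1_0\bou\Dcz$, and your key device in Step 2 --- taking the inner dependent-choice sequences themselves as the objects of the outer iteration so that the step relation remains arithmetic with unique successors --- is precisely the paper's move of packaging the inner construction as the arithmetic unique-existence statement $\forall A \exists ! Y\,\theta'(A,Y)$ and feeding $\theta'$ back into the same machinery. The only divergence is cosmetic, in the assembly: the paper re-instantiates its closure template with $\theta'$ in place of $\theta$, so the outer model satisfies $\forall X \exists ! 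Y\,\theta'(X,Y)+\Acaz$ and hence witnesses its own reflection instances internally, whereas you iterate the bare inner-sequence map via your $\Theta$ and verify $\Rfn(\forall X \exists ! Y\,\theta(X,Y))+\Acaz$ for the union of the chain by hand (including the model codes $N(W_m)$ as elements so that every set of $M$ lies in some coded model that is itself a set of $M$); both verifications rest on the same absoluteness of arithmetic satisfaction over $\omega$-models that you correctly flag.
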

\begin{proof}
We discuss within $\unique\Pi^1_0\bou\Dcz + \forall X \exists ! Y \theta(X,Y)$.Fix $A$.Let $\psi(X,Y)$ be the following arithmetic formula:
\begin{align*}
	\forall m \forall k\le m \forall l,r\le k\{ & (m=0 \rightarrow Y_0 = A) \land \\ 
							   & (m = 4k+1 \rightarrow \theta(X_k,Y_m)) \land \\
							   & (m = 4k+2 \land k=\braket{l,r} \rightarrow [(\Phi^{X_l}_r \in2^\N \rightarrow Y_m = \Phi^{X_l}_r  ) \land \\
								  &~~~~~~~~~~~~~~~~~~~~~~~~~~~~~~~~~~~~(\Phi^{X_l}_r \not\in2^\N \rightarrow Y_m = \varnothing  ) ]) \land \\
							   & (m = 4k+3\land k=\braket{l,r} \rightarrow Y_m = X_l \oplus X_r ) \land \\
							   & (m = 4k+4 \rightarrow Y_m = \TJ(X_k)) 	\}.
\end{align*}
\begin{claim}
$\forall X \exists !Y \psi (X,Y)$.
\end{claim}
\begin{proofofclaim}
Fix $X$.The uniqueness of $Y$ is evident.To show existence, take an arithmetic formula $\rho$ that includes $X$ as a parameter and satisfies $\forall m \rho(m,Y) = \psi (X,Y)$.Given $\forall X \exists ! Y \theta(X,Y)$, it is correct that $\forall m \exists ! Y \rho(m,Y)$.Using $\unique\Pi^1_0\bou\Acz$, we can obtain $Y$ such that $\forall m \rho(m,Y_m)$ is satisfied.
This $Y$ meets the condition.
\end{proofofclaim}
Now consider $X = \{X_n\}_{n\in\N}$ where $X_n = A$ for all $n \in \N$, i.e., a sequence of sets where every element is $A$.From the previous claim and $\unique\Pi^1_0\bou\Dcz$, we obtain $Y$ such that $Y_0 = X$ and $\forall n \psi(Y_n, Y_{n+1})$.Furthermore, again from the previous claim, this $Y$ is also unique.
Summarizing the above, it follows that
\[\forall A \exists ! Y(=\{Y_n\}_{n\in\N})( Y_0 = \{A\}_{n\in\N} \land \forall n \psi(Y_n, Y_{n+1}) )\]
is correct.

Given any arbitrary $A$, from the above we can uniquely determine $Y$, and define $M=\{M_n\}_{n\in\N}$ where $M_{\braket{l,r}} = (Y_l)_r$\footnote{Intuitively, $M = \bigcup_{n\in\N} Y_n$}.This $M$ contains $A$ and is a $\codedom$ that satisfies $\forall X \exists ! Y \theta(X,Y) + \Acaz$.

Now, define $\theta'(A,Y)$ as $Y_0 = \{A\}_{n\in\N} \land \forall n \psi(Y_n,Y_{n+1},A)$.Then, $\forall A \exists ! Y \theta'(A,Y)$ holds, and $\theta'$ is arithmetic.Consequently, by replacing $\theta(X_k,Y_n)$ in the definition of $\psi$ with $\theta'(X_k,Y_n)$ to form $\psi'$, a similar discussion can be conducted.For any given $A$, it is possible to construct a $\codedom N$ that contains $A$ and satisfies $\forall X \exists ! Y \theta'(X,Y)+\Acaz$.Particularly, since $N \models \forall X \exists ! Y \theta'(X,Y)$, it follows that $N \models \Rfn(\forall X \exists ! Y \theta(X,Y))$.Therefore, we can conclude $\Rfn(\Rfn(\forall X \exists ! Y \theta(X,Y))+\Acaz)$, namely $\Rfn^2(\forall X \exists ! Y \theta(X,Y))$.
\end{proof}

\begin{corollary}
		For all arithmetic formulas $\theta(X,Y)$ that have no free variables other than $X$ and $Y$, it holds that $\forall X \exists ! Y \theta(X,Y) \not\in \Rfn^{-1}(\Atrz)$.This means the following is correct:
		\[\Rfn(\forall X \exists ! Y \theta(X,Y))_0 \not\equiv \Atrz .\]
\end{corollary}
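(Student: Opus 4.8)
The plan is to assume the negation and drive a contradiction through the minimal model $\HYP$, using Theorem~\ref{theorem:uniqueSigma10DC0上で任意一意存在型から2回RFNが出る} as the engine. Write $S := \forall X \exists ! Y\,\theta(X,Y)$ and suppose toward a contradiction that $S \in \Rfn^{-1}(\Atrz)$, that is, $\Rfn(S)_0 \equiv \Atrz$. Since $\unique\Pi^1_0\bou\Dcz$ belongs to hyperarithmetic analysis (being a theory between $\JI_0$ and $\Sigma^1_1\bou\Dcz$), the model $\HYP = \HYP(\varnothing)$ is one of its $\omega$-models. The whole argument hinges on being able to run Theorem~\ref{theorem:uniqueSigma10DC0上で任意一意存在型から2回RFNが出る} inside $\HYP$, which requires $\HYP \models S$; I single this out below as the main obstacle.

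Granting $\HYP \models S$, the conclusion is quick. By Theorem~\ref{theorem:uniqueSigma10DC0上で任意一意存在型から2回RFNが出る} we obtain $\HYP \models \Rfn^2(S)$. Reflection is increasing: over $\Acaz$, $\Rfn^2(S)$ implies $\Rfn(S)_0$, because a coded $\omega$-model witnessing $\Rfn^2(S)$ internally produces, for every parameter, a coded $\omega$-model of $S + \Acaz$, and such an internal model is already a coded $\omega$-model in the ambient structure. Hence $\HYP \models \Rfn(S)_0$, and since $\Rfn(S)_0 \equiv \Atrz$ by assumption, $\HYP \models \Atrz$. This contradicts the standard fact that $\HYP \not\models \Atrz$ (the hyperarithmetic sets fail arithmetical transfinite recursion along pseudo-well-orderings that $\HYP$ regards as well-orders). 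Thus $\Rfn(S)_0 \not\equiv \Atrz$, which is the assertion of the corollary.

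The hard part is to show that the hypothesis $\Rfn(S)_0 \equiv \Atrz$ already forces $\HYP \models S$. I would argue this semantically in the full model $(\N,\mathcal{P}(\N))$, in which every set-well-ordering is genuinely well-founded, so arithmetical transfinite recursion always succeeds and $\Atrz$ holds. From the assumption $\Atrz \vdash \Rfn(S)$ we then get $(\N,\mathcal{P}(\N)) \models \Rfn(S)$, and I would extract $(\N,\mathcal{P}(\N)) \models S$ in two steps. For existence, reflecting an arbitrary $X$ yields a coded $\omega$-model $M \ni X$ with $M \models S$, so $M$ contains a $Y$ with $\theta(X,Y)$, and arithmetical absoluteness makes $\theta(X,Y)$ genuinely true. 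For uniqueness, if two distinct $Y_1,Y_2$ satisfied $\theta(X,\cdot)$ I would reflect the single real $X \oplus Y_1 \oplus Y_2$: the resulting model contains both witnesses and, by absoluteness, verifies $\theta(X,Y_1)\wedge\theta(X,Y_2)\wedge Y_1\neq Y_2$, so it cannot satisfy $S$, a contradiction.

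Finally I would push $S$ down from the full model to $\HYP$. The key observation is that whenever $\exists ! Y\,\theta(X,Y)$ holds with $\theta$ arithmetical, the unique witness $Y$ is $\Delta^1_1$ in $X$, since $n \in Y \leftrightarrow \exists Y'(\theta(X,Y') \wedge n \in Y') \leftrightarrow \forall Y'(\theta(X,Y') \rightarrow n \in Y')$ exhibits $Y$ as both $\Sigma^1_1(X)$ and $\Pi^1_1(X)$. Consequently, for hyperarithmetic $X$ the unique witness is again hyperarithmetic, hence lies in $\HYP$, while uniqueness is inherited downward from the full model; therefore $\HYP \models S$. I expect the only delicate bookkeeping to be the arithmetical absoluteness of $\theta$ across coded $\omega$-models and the appeal to $\HYP \not\models \Atrz$; once $\HYP \models S$ is secured, the reduction in the second paragraph closes the argument.
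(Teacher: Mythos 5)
Your proposal is correct, but it closes the argument by a genuinely different mechanism than the paper. The two arguments share their opening moves: from the assumed direction $\Atrz \vdash \Rfn(S)_0$, where $S := \forall X \exists ! Y\,\theta(X,Y)$, one first extracts the truth of $S$ itself (the paper does this internally, concluding $\Atrz \vdash S$; you do it semantically in the full model $(\N,\mathcal{P}(\N))$, with the same reflect-and-absoluteness trick for uniqueness), and both then feed $S$ into Theorem~\ref{theorem:uniqueSigma10DC0上で任意一意存在型から2回RFNが出る} to obtain $\Rfn^2(S)$. The divergence is in how the contradiction is reached: the paper stays proof-theoretic, converting $\Atrz \vdash \Rfn^2(S)$ into $\Atrz \vdash \Con(\Rfn(S)_0)$ via Lemma~\ref{lemma:RFN(T)→Con(T)} and refuting $\Rfn(S)_0 \vdash \Atrz$ by G\"{o}del's second incompleteness theorem, whereas you evaluate everything inside $\HYP$: you push $S$ down using the $\Delta^1_1$-definability of the unique witness together with Kleene's $\Delta^1_1 = \HYP$, run the Theorem there because $\unique\Pi^1_0\bou\Dcz$ belongs to hyperarithmetic analysis, deduce $\HYP \models \Rfn(S)_0$ from $\Rfn^2(S)$, and contradict $\HYP \not\models \ATR$. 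Your route avoids G\"{o}del~II and consistency predicates entirely and exhibits a concrete witness to the failure (under the assumption, $\HYP$ itself models $\Rfn(S)_0 + \lnot\ATR$), in the same semantic spirit as Theorem~\ref{Thorem:Sigma^1_3文で有限公理化される理論は超算術的解析でない．}; the cost is a stronger metatheory and the external facts $\HYP \not\models \ATR$ and $\Delta^1_1 = \HYP$, neither of which the paper's internal argument needs. Two small points of bookkeeping: your phrase ``reflection is increasing'' should say that iterated reflection implies the lower iterates, i.e.\ $\Acaz \vdash \Rfn^2(S) \rightarrow \Rfn(S)_0$; and, as you correctly flag, that step rests on the upward absoluteness of ``$N$ is a coded $\omega$-model of $S + \Acaz$'' from an internal $\omega$-model to the ambient one, which is routine because satisfaction of a fixed sentence and closure under jump, join and Turing reducibility are arithmetic in the code of $N$ and hence absolute for $\omega$-models.
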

\begin{proof}
	Assume $\Atrz \vdash \Rfn (\forall X \exists ! Y \theta(X,Y))_0$.Then $\Atrz \vdash \forall X \exists ! Y \theta(X,Y)$, and from $\Atrz \vdash \unique\Pi^1_0\bou\Dcz$, it follows that $\Atrz\vdash \Rfn^2 (\forall X \exists ! Y \theta(X,Y))$ is correct.Therefore, from Lemma \ref{lemma:RFN(T)→Con(T)}, $\Atrz \vdash \Con(\Rfn (\forall X \exists ! Y \theta(X,Y)) + \Acaz)$, which implies $\Atrz \vdash \Con(\Rfn (\forall X \exists ! Y \theta(X,Y))_0)$.
Thus, if we assume $\Rfn (\forall X \exists ! Y \theta(X,Y))_0 \vdash \Atrz$, then
\[\Rfn (\forall X \exists ! Y \theta(X,Y))_0 \vdash \Con(\Rfn (\forall X \exists ! Y \theta(X,Y))_0) \]
is derived, contradicting the second incompleteness theorem.
\end{proof}
From the proof of Theorem \ref{theorem:uniqueSigma10DC0上で任意一意存在型から2回RFNが出る}, it is clear that the equivalence with the $\omega$-model reflection axiom, similar to what is described in Lemma \ref{lemma:Theorem VIII.5.12の一部}, also holds for $\unique\Pi^1_0\bou\Dcz$.

\begin{corollary}
In Definition \ref{definition:Γ-RFN}, consider $\theta$ as an arithmetic formula and collect only those of the form $\forall X \exists !Y \theta(X,Y,....)$.The theory formed by adding $\Acaz$ to this collection is denoted as $\unique\Pi^1_2\bou\Rfn$.Under this setup, the following holds:

\[\unique\Pi^1_0\bou\Dcz \equiv \unique\Pi^1_2\bou \Rfnz .\]
\end{corollary}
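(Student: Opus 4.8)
The plan is to prove the two inclusions separately, reading $\unique\Pi^1_2\bou\Rfnz$ as $\Acaz$ together with the schema $\{\Rfn_\varphi : \varphi \equiv \forall X \exists ! Y\, \theta(X,Y,\vec Z),\ \theta\ \text{arithmetic}\}$ from Definition \ref{definition:Γ-RFN}. The direction $\unique\Pi^1_0\bou\Dcz \vdash \unique\Pi^1_2\bou\Rfnz$ is essentially already contained in Theorem \ref{theorem:uniqueSigma10DC0上で任意一意存在型から2回RFNが出る}. First, $\unique\Pi^1_0\bou\Dcz \vdash \Acaz$. For each admissible $\varphi \equiv \forall X \exists ! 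Y\, \theta(X,Y,\vec Z)$, the axiom $\Rfn_\varphi$ is exactly the single $\omega$-model reflection produced in the first half of the proof of Theorem \ref{theorem:uniqueSigma10DC0上で任意一意存在型から2回RFNが出る}: assuming $\varphi(\vec Z)$, that construction yields, for any base set $A$, a coded $\omega$-model $M \ni A,\vec Z$ with $M \models \varphi(\vec Z)+\Acaz$. Since the parameters $\vec Z$ (and $A$) are only fed into the initial set of the iteration, the argument relativizes verbatim, giving $\unique\Pi^1_0\bou\Dcz \vdash \Rfn_\varphi$.

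The substantial direction is $\unique\Pi^1_2\bou\Rfnz \vdash \unique\Pi^1_0\bou\Dcz$, which parallels the derivation of $\Sigma^1_1\bou\Dcz$ from $\Sigma^1_3\bou\Rfnz$ recorded in Lemma \ref{lemma:Theorem VIII.5.12の一部}. By the reformulation in Proposition \ref{proposition:DCの言い換え} it suffices, given arithmetic $\theta$ with $\forall X \exists ! Y\, \theta(X,Y)$ and a set $A$, to produce $Y=\langle Y_n\rangle$ with $Y_0=A$ and $\forall n\, \theta(Y_n,Y_{n+1})$. I would apply $\Rfn_\varphi$ to $\varphi(A) \equiv \forall X \exists ! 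Y(\theta(X,Y)\land A=A)$, which is still of the admissible form and carries $A$ as a dummy parameter, thereby obtaining a coded $\omega$-model $M \ni A$ with $M \models \forall X \exists ! Y\, \theta(X,Y)+\Acaz$.

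The key step is then to extract an honest choice sequence from $M$ without re-invoking dependent choice, and here uniqueness is decisive. I would define $g(k)$ to be the least index $k'$ with $\theta(M_k,M_{k'})$. Since $\theta$ is arithmetic it is absolute between the $\omega$-model $M$ and the ambient structure, and since $M \models \forall X \exists ! Y\,\theta(X,Y)$ the value $g(k)$ always exists and $M_{g(k)}$ is the unique $\theta$-successor of $M_k$; hence the graph of $g$ is arithmetic in $M$ and $g$ is total and single-valued on the indices of $M$. By $\Acaz$ the function $g$ exists as a set, and iterating it by primitive recursion starting from a fixed index $k_0$ with $M_{k_0}=A$ yields a sequence $\langle k_n\rangle$ already in $\Rcaz$; setting $Y_n=M_{k_n}$ gives the required $Y$.

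The main obstacle, and the whole point of the argument, is precisely this extraction. The naive attempt to build $\langle Y_n\rangle$ by repeatedly choosing successors inside $M$ is circular, as it is itself an instance of dependent choice. What saves the unique case is that uniqueness collapses the successor \emph{relation} into a genuine function $g$ obtainable by arithmetic comprehension, so that the iteration becomes ordinary primitive recursion rather than a choice. For the full principle $\Sigma^1_1\bou\Dcz$ no such function is available, which is exactly why that principle needs the stronger reflection $\Sigma^1_3\bou\Rfnz$ of Lemma \ref{lemma:Theorem VIII.5.12の一部}, whereas $\omega$-model reflection of the instance alone suffices here. The only remaining routine points to verify are the absoluteness of $\theta$ between $M$ and $V$ and the insertion of $A$ (and any parameters) into $M$ via the dummy conjunct, both of which are standard.
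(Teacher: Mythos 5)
Your proposal is correct and follows essentially the same route as the paper's own proof: the forward direction defers to the $\omega$-model construction of Theorem~\ref{theorem:uniqueSigma10DC0上で任意一意存在型から2回RFNが出る}, and the converse extracts the choice sequence from the reflected coded $\omega$-model by exactly the paper's least-index recursion $f(0)=m_0$, $f(n+1)=(\mu m)[\theta(M_{f(n)},M_m)]$. Your additional remarks --- the absoluteness of arithmetic $\theta$ between $M$ and the ambient structure, the role of uniqueness in making the successor relation a function obtainable in $\Acaz$ so the iteration is ordinary recursion rather than a hidden instance of dependent choice, and the dummy conjunct inserting $A$ as a parameter --- merely make explicit steps the paper leaves implicit.
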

\begin{proof}
By constructing an $\omega$-model as done in the proof of Theorem \ref{theorem:uniqueSigma10DC0上で任意一意存在型から2回RFNが出る}, it can be shown that $\unique\Pi^1_0\bou\Dcz \vdash \unique\Pi^1_2\bou \Rfnz$.

To prove the converse, assume $\forall X \exists ! Y \theta(X,Y)$, and take an arbitrary $A$.By the assumption, there exists a $\codedom M$ such that $A \in M \models \forall X \exists ! Y \theta(X,Y)$.Fix an $m_0$ such that $A = (M)_{m_0}$, and define a function $f: \N \to \N$ as follows:
\[\begin{cases}
    f(0) = m_0 ,\\
    f(n+1) = (\mu m)[\theta(M_{f(n)}, M_m)].
\end{cases}\]
Then, $\forall n \theta(M_{f(n)}, M_{f(n+1)})$ holds.

\end{proof}

Regarding the second type of question, which concerns the differences between hyperarithmetic analysis and $\Rfn^{-1}(\Atrz)$, unfortunately, only trivial facts are known.For instance, $\Sigma^1_1\bou\Acz + \Con(\Atrz)$ belongs to the symmetric difference between the two, but no non-trivial theories that fit this example have yet been found.

\section{Open Problem}
As previously mentioned, the following two issues remain unresolved:
\begin{question}
Are there instances of $\Sigma^1_3$ sentences within $\Rfn^{-1}(\Atrz)$?
\end{question}

\begin{question}
What non-trivial theories exist in the symmetric difference between hyperarithmetic analysis and $\Rfn^{-1}(\Atrz)$?
\end{question}

$\unique\Gamma\bou \Dcz$ can be proven by adding induction axiom to $\unique\Gamma\bou \Acz$.However, for example, since $\unique\Pi^1_0\bou\Dcz$ cannot prove $\Sigma^1_1\bou\Ind$, the induction used there is ``excessive''.This raises interest in identifying the axioms that are precisely necessary and sufficient to prove $\unique\Gamma\bou \Dcz$ from $\unique\Gamma\bou \Acz$.Therefore, the following question arises:
\begin{question}
When $\Gamma$ is either $\Pi^1_0$ or $\Sigma^1_1$, is there an $\Lt$ theory $\alpha$ such that:
\[\unique \Gamma\bou \Acz + \alpha \equiv \unique\Gamma \bou \Dcz.\]
What happens if the term 'unique' is replaced with 'finite'?
\end{question}

 
\bibliographystyle{alpha}
\bibliography{main}

\end{document}